\definecolor{darkred}{rgb}{0.75,0,0.3}
\newcommand\AND{\quad\text{and}\quad}
\newcommand\bal{\mathsf{bal}}
\newcommand\C{\mathbb C}
\newcommand\df{\mathsf{df}}
\newcommand\Gc{\mathcal{G}}
\newcommand\Prob{\mathbb{P}}
\newcommand\Q{\mathbb{Q}}
\newcommand\R{\mathbb R}
\newcommand\rd{\text{\rm rd}}
\newcommand\sgn{\mathsf{sgn}}
\newcommand\spec{\mathsf{spec}}
\newcommand\uno{\mathbf{1}}
\numberwithin{equation}{section}
\newtheoremstyle{mythm}% name
  {9pt}%      Space above, empty = `usual value'
  {9pt}%      Space below
  {\itshape}% Body font
  {0pt}%         Indent amount (empty = no indent, \parindent = paraindent)
  {\bfseries}% Thm head font
  {}%        Punctuation after thm head
  { }% Space after thm head: \newline = linebreak
  {\thmnumber{(#2)}\thmname{ #1}\thmnote{ #3}}%         Thm head spec
\newtheoremstyle{mydef}% name
  {9pt}%      Space above, empty = `usual value'
  {9pt}%      Space below
  {\normalfont}% Body font
  {0pt}%         Indensf suppt amount (empty = no indent, \parindent = paraindent)
  {\bfseries}% Thm head font
  {}%        Punctuation after thm head
  { }% Space after thm head: \newline = linebreak
  {\thmnumber{(#2)}\thmname{ #1}\thmnote{ #3}}%         Thm head spec
\theoremstyle{mythm}
\newtheorem{thm}[equation]{Theorem.}
\newtheorem{pro}[equation]{Proposition.}
\newtheorem{lem}[equation]{Lemma.}
\newtheorem{lemex}[equation]{Lemma+Example.}
\newtheorem{cor}[equation]{Corollary.}
\theoremstyle{mydef}
\newtheorem{dfn}[equation]{Definition.}
\newtheorem{exa}[equation]{Example.}
\newtheorem{rmk}[equation]{Remark.}
\begin{document}$\,$ \vspace{-1.2truecm}
\title{The spectra of graph substitutions}
\author{\bf Thomas Hirschler, Wolfgang Woess}
\address{\parbox{.8\linewidth}{Institut f\"ur Diskrete Mathematik,\\ 
Technische Universit\"at Graz,\\
Steyrergasse 30, A-8010 Graz, Austria\\}}
\email{\rm thirschler@tugraz.at, woess@tugraz.at}
\date{July 23, 2025} 

\begin{abstract}
Let $(X,E_X)$ and $(V,E_V)$ be finite connected graphs without loops. 
We assume that $V$  has two distinguished vertices 
$a,b$ and an automorphism $\gamma$ which exchanges $a$ and~$b$. 
The $V$-edge substitution of $X$  is the graph $X[V]$
where each edge $[x,y] \in E_X$ is replaced by a copy of $V$, identifying $x$ with $a$ and $y$ with 
$b$ or vice versa. (The latter choice does not matter; it yields isomorphic graphs.) The aim is to 
describe the spectrum of $X[V]$ in terms of the spectra of $X$ and $V$. Instead of the spectra
of the adjacency matrices, we consider the versions which are normalised by dividing each row by the 
row sum (the vertex degree). These are stochastic, reversible matrices, and our approach applies
more generally to reversible transition matrices corresponding to arbitrary positive edge 
weights invariant under $\gamma$.
We write $P$ for the transition matrix over $X$ and $Q$ for the one over $V$. Together, they
induce the matrix $P_*$ over $X[V]$. 

The main part of the spectrum of $P_*$ is the response of the natural frequencies of 
$X$ to substituting $V$, given by a functional equation coming from a rational function 
induced by~$Q$. A second part comes from specific eigenvalues of $Q$, if present. 
Finally, there is the part of $\spec(P_*)$ whose eigenfunctions have $X$ as a nodal set. 
The results depend on issues like whether
$X$ has circles of even length and on the eigenvalues of the restriction of $Q$
to $V \setminus \{ a,b\}$, which are classified into 4 possible types. 
%, each to be handled differently. 
Quite subtle is the issue of determining the multiplicities of the latter as eigenvalues
of $P_*$ in terms of the input.
\end{abstract}

\subjclass[2020] {05C50;   %Graphs and linear algebra 
                  05C76,   %Graph operations
                  47A10,   %Spectrum, resolvent
                  60J10    %Markov chains 
                  }
\keywords{Graph substitution, spectrum of a graph, reversible transition matrix}

\maketitle

\markboth{{\sf T. Hirschler and W. Woess}}
{{\sf Graph substitution}} 
\baselineskip 14.45pt

\vspace*{-.6cm}

\section{Introduction}\label{sec:intro}

Let $(X,E_X)$ -- in short, $X$ -- be a connected, locally finite graph with vertex set $X$
and edge set $E_X$ %without loops 
and $(V,E_V)$ a finite, connected graph, both without loops. 
We suppose that $V$ has two distinguished vertices $a, b$ and a graph automorphism $\gamma$ 
such that $\gamma a = b$ and $\gamma b = a$.

The $(V, a,b)$-edge substitution -- in short, $V$-edge substitution of $X$ -- is the graph $X[V]$
where each edge $[x,y] \in E_X$ is replaced by a copy of $V$, identifying $x$ with $a$ and $y$ with 
$b$ or vice versa. In view of the presence of the automorphism $\gamma$ the latter choice does
not matter, i.e., it yields isomorphic graphs. Figure 1 shows a simple example.
$$
\beginpicture 

\setcoordinatesystem units <4.2mm,4.2mm> 

\setplotarea x from -4 to 28, y from -5 to 4.5

%%%\arrow <6pt> [.2,.67] from 2 2 to 80 80

\put {$X$} at -3 3.6
\plot 4 0  1.236 3.804  -3.236 2.351  -3.236 -2.351  1.236 -3.804  4 0 /
\multiput {$\scriptstyle \bullet$} at 
  4 0  1.236 3.804  -3.236 2.351  -3.236 -2.351  1.236 -3.804 /

\put {$V$} at 8.3 1.8
\plot 10.351 1.2  8 0  10.351 -1.2  12.702 0  10.351 1.2  10.351 -1.2 /
\multiput {$\scriptstyle \bullet$} at   8 0    12.702 0 /
%  10.351 1.2  8 0  10.351 -1.2  12.702 0 /
\put {$a$} at 7.6 -0.4
\put {$b$} at 13,102 -0.4

\put{$\Longrightarrow$} at 16.5 0

\put {$X[V]$} at 19.5 3.6
\plot 19.564 0  20.764 -2.351  21.964 0  20.764 2.351  19.564 0  21.964 0 /
\plot 22.629 4.217  20.764 2.351  23.370 1.936  25.236 3.804  22.629 4.217  23.370 1.936 /
\plot 22.629 -4.217  20.764 -2.351  23.370 -1.936  25.236 -3.804  22.629 -4.217  23.370 -1.936 /
\plot 27.588 2.608  25.236 3.804  25.647 1.196  28 0  27.588 2.608  25.647 1.196 /
\plot 27.588 -2.608  25.236 -3.804  25.647 -1.196  28 0  27.588 -2.608  25.647 -1.196 /
\multiput {$\scriptstyle \bullet$} at 
  20.764 -2.351  20.764 2.351  25.236 -3.804  25.236 3.804  28 0 /

%\put {$H_{-3}$} [l] at -20 64

\put {Figure~1} at 12 -4.8
\endpicture
$$

\medskip

We want to understand the spectrum of $X[V]$. For a discussion in the context of graph
products, see the final parapgraph of this paper. 
The basic spectral theory of graphs mostly refers to the adjacency matrix or the Laplacian matrix, see
e.g. the books by {\sc Cvetkovi\'c, Doob and Sachs}~\cite{CDS}, {\sc Biggs}~\cite{Bi},
{\sc Chung}~\cite{Ch}, {\sc Godsil and Royle}~\cite{GR}. Substitution in infinite graphs 
is postponed to future work, and we only cite {\sc Mohar and Woess}~\cite{MW}. 
Here, our approach is slightly different in two ways: first, we consider locally finite 
weighted graphs:   for a graph 
$X$ each edge $e=[x,y]\in E_X$ of $X$ has a positive conductance $a_X(e) = a_X(x,y) = a_X(y,x)$, and 
the basic assumption in the place of local finiteness is that the total conductance 
$m_X(x) = \sum_y a_X(x,y)$ is finite at every vertex $x \in X$. Instead of the weighted adjacency
matrix $\bigl(a_X(x,y)\bigr)_{x,y \in X}$ we consider the normalised version 
$P_X = P = \bigl(p(x,y)\bigr)_{x,y \in X}\,$, where $p(x,y)= a_X(x,y)/m_X(x)$. This is a stochastic
matrix, often called reversible, which stands for the fact that multiplication by $m_X(\cdot)$ 
symmetrises $P$. Associated with it, there is the rich theory of electrical networks and reversible
Markov chains, see e.g. {\sc Lyons and Peres}~\cite{LP} or {\sc Woess}~\cite[Ch. 4]{WMarkov}.
For the associated spectral theory of self-adjoint, non-negative matrices,  
{\sc Horn and Johnson}~\cite{HJ} is the basic general reference. When the edge weights are constant 
($=1$), $P$ is the transition matrix of the so-called simple random walk (SRW) on the underlying
graph.
Most (pracitcally all) of what is presented here for the normalised (stochastic) matrices $P$
also works for the adjacency matrices.

Coming back to our setting, for the graph $(V,E_V)$ we use the following notation:
each edge $e = [u,v]$ of $V$ has a postive conductance
$a_V(e) = a_V(u,v) = a_V(v,u)$, and $m_V(u) = \sum_v a_V(u,v)$. By an automorphism of
a network we intend a conductance-preserving bijection $\gamma$ of the vertex set,
and we require existence of such an automorphism of $V$ which exchanges $a$ with $b$. In particular,
$m_V(a)=m_V(b)$.
The edge set of $X[V]$ is $E_{X[V]} = E_X \times E_V$ and the vertex set is 
$X \cup (E_X \times V^o)$,  where $V^o= V \setminus \{a,b\}$ (the interior of $V$). 

More formally this may be described as follows:
first, we make a choice for each $e=[x,y]\in E_X$ which of its endpoints should be identified
with $a$, resp. with $b$ when substituting $V$ for that edge. Thus, $e=[e^a\,,e^b]$ with
$\{e^a\,,e^b\} = \{x,y\}$. We will think of this as putting an orientation (arrow) on each edge, 
such that $e^a$ is the initial and $e^b$ the terminal vertex of $e$. None of our results
depends on the choice of this orientation. Then we
consider the graph $E_X \times V$ consisting of disjoint copies $\{e\}\times (V,E_V)$  of $V$ indexed
by all $e \in E_X\,$, where of course the edges are $[(e,u),(e,v)]$ whenever $[u,v] \in E_V\,$. 
Then we define the identification map as the graph homomorphism
\begin{equation}\label{eq:ident}
\begin{aligned}
&\pi: E_X \times V \to X \cup (E_X \times V^o),\\
&\pi(e,a) = e^a\,, \; \pi(e,b) =e^b\,, \; \pi(e,v) = (e,v) \; \text{for}\; v \in V^o\,,
\end{aligned} 
\end{equation}
so that $\pi|_{\{e\}\times V}$ is a graph isomorphism for each $e \in E_X\,$.

The conductances on $E_{X[V]}$ are the products of the conductances of the factors,
that is, for an edge $(e, \tilde e) \in E_X \times E_V$, the conductance  is
$$
a_{X[V]}(e, \tilde e) = a_X(e)a_V(\tilde e)\,.
$$
Thus, the total conductances of the substituted network are
$$
\begin{aligned}
m_{X[V]}(x) &= m_X(x)m_V(a) \quad \text{for }\; x \in X\,, \AND\\
m_{X[V]}(e,v) &= a_X(e)m_V(v) \quad \text{for }\; (e,v) \in E_X \times V^o\,. 
\end{aligned}
$$
The stochastic transition matrix $P=P_X$ associated with the network $(X, a_X)$ as above
governs a Markov chain (random walk) 
$(Z_n)_{n\ge 0}$ on $X$ with $\Prob_{x_0}[Z_{n+1}=y \mid Z_n = x]$.\footnote{We shall use 
Markov chain terminology on severeal occasions, but for the reader who wants to avoid 
probability, things can be formulated in terms of combinatorics of paths, see
the final part of \cite[\S 1.D]{WMarkov}.}
The index $x_0$ indicates the starting point $x_0\,$. 

Let $\ell_2(X,m_X)$ be the Hilbert
space of all functions $f: X \to \C$ with $(f,f)_X < \infty\,$, where the inner product is 
$$
(f,g)_X = \sum_{x \in X} f(x) \overline{g(x)}\, m_X(x).
$$
$P$ acts on this space as a self-adjoint operator by
$$
Pf(x) = \sum_y p(x,y)f(y).
$$
We let $\spec(P)$ be the associated spectrum.
Analogously, we have the transition matrix associated with $(V,a_V)$ for which we 
write 
$$
Q = \bigl(q(u,v)\bigr)_{u,v \in V}\,, \quad \text{where} \quad q(u,v) = \frac{a_V(u,v)}{m_V(u)}
$$
and the associated Markov chain $(Y_n)_{n\ge 0}$ on the state space $V$.

Finally, we also have the transition matrix $P_*$ of the network $(X[E], a_{X[E]})$. 
%We shall write $x^*, y^*$ etc. for the vertices in $X[V]$.  Thus, $P_*$ 
It governs  the
Markov chain $(Z_n^*)_{n \ge 0}$ with state space $X[V]$, and it acts as a self-adjoint
operator on $\ell^2(X[V], m_{X[V]})$ with inner product defined as above.

\smallskip

\textbf{Our goal is to study how $\spec(P_*)$ is obtained from 
$P$ and $Q$.}

\smallskip

Several colleagues asked whether this has not been done before. The answer is apparently ``no''
with the small exception of Thm. 9.89 in the second author's book~\cite{WMarkov} where 
$X$ is infinite, $V$ is a finite line 
graph and the spectral radius is considered. As a mattter, this widely unobserved original result
was the main source for the ideas presented here. The point is that there is no nicely closed 
formula which describes the desired result, but in a kind of puzzle several pieces have 
to be carefully put together 
which  depend on structural features of $X$ such as bipartiteness and cycle structure as well as on 
properties of the eigenfunctions (= right eigenvectors) of $Q$ and in particular 
of its restriction $Q_{V^o}$ to~$V^o$. 

\smallskip

In the present paper, we assume from \S \ref{sec:Green} onwards that both $X$ and $V$ are finite.
We also assume that the subgraph $V \setminus \{b\}$ of $V$ (and thus also $V \setminus \{a\}$) is 
connected: otherwise, the edge $[a\,,\,b]$ would belong to $E_V$ and constitute a bridge 
between two isomorphic parts of $V$,
and the substitution would consist in attaching ``dangling'' copies of those parts at 
each vertex of $X$ .

We start in \S \ref{sec:Green} with general 
facts about the Green function of a reversible stochastic matrix, that is, the resolvent matrix.
Typical equations for the latter as well as related functions can be obtained in 
terms of walk generating functions in the complex variable $1/z$, valid a priori in the domain of 
convergence of these power series from where they extend analytically as rational functions.
A crucial role is played by $\varphi(z)$, the reciprocal of the ``hitting function'' of $b$ when 
the Markov  chain on $V$ governed by $Q$ starts at $a$. Along with two further specific functions, it is
the key to a boundary value problem on the graph $V$ with boundary $\{ a,b \}$. This leads to 
the first important part of the spectrum of $P_*$ via solutions $\lambda^*$ of the equations
$\varphi(\lambda^*)=\lambda$, where $\lambda \in \spec(P)$ -- the response of the natural 
frequencies of $X$ to substituting~$V$. %(Theorem \ref{theorem:eigen}).

In \S \ref{sec:into}, we classify the eigenvalues of $Q$ as well as of $Q_{V^o}$ into 4 types,
respeptively, depending on the corresponding eigenspaces. According to the type, the eigenfunctions may 
(or may not) be extended to eigenfunctions of $P_*\,$. The more important ones are those of 
$Q_{V^o}$ which lead to eigenfunctions that are $\equiv 0$ on the set of vertices $X \subset X[V]$, 
that is, those which have $X$ as a nodal set. 

In \S \ref{sec:complete}, we show that the extension constructions of \S \ref{sec:into} yield all 
possible eigenfunctions which are nodal ($\equiv 0$) on $X$. This is important for understanding
the multiplicities of the corresponding eigenvalues of $P_*$ in terms of the input from 
$\spec(Q_{V^o})$ and $\spec(Q)$. 

We then summarise in \S \ref{sec:spec}, describing the entire spectrum of $P_*$ and 
the multiplicities of the eigenvalues. The latter are subtle and depend on the types of 
the eigenvalues, and an exceptional set of candidate eigenvalues has to be excluded when
$X$ is a tree or posseses just one cycle which has odd lentgh. 

In \S \ref{sec:examples}, we present a few basic examples along with the necessary
computational details.
%, checking each time that the resulting sum of the eigenvalue
%multiplicities of $P_*$ is what is has to be, namely $|X| + |E_X|\bigl(|V|-2\bigr)$.

The final, short \S \ref{sec:outlook} contains a brief discussion on spectrally (non-)dominant
vertices, an outlook on infinite graphs
including self-similarity, as well as an outlook on substituting a  manifold, and concludes
with additional references  concerning the affinity of graph substitution with graph products.

\section{Green kernels and a functional equation for eigenvalues}\label{sec:Green}

Before starting, we provide a short compendium on part of our eigenvalue notation.

\begin{itemize}
 \item Eigenvalues of $P$ are denoted $\lambda$.  
       Corresponding multiplicity: $\nu_P(\lambda)$.
 \item Eigenvalues of $P_*$ are denoted $\lambda^*$. 
       Corresponding multiplicity: $\nu_*(\lambda)$.
 \item Eigenvalues of $Q$ are denoted $\lambda$, resp. $\lambda^*$
 when embedded into $\spec(P_*)$.\\
      Corresponding multiplicity in $\spec(Q)$: $\;\nu_Q(\lambda)$,
      resp. $\nu_Q(\lambda^*)$.       
\item Eigenvalues of $Q_{V^o}$ (restriction ov $Q$ to $V^o$)
  are denoted $\lambda$, resp. $\lambda^*$
 when embedded into $\spec(P_*)$. 
      Corresponding multiplicity in $\spec(Q_{V^o})$: $\;\nu_o(\lambda)$,
      resp. $\nu_o(\lambda^*)$.       
\end{itemize}

The spectrum of each of our matrices is contained in the real interval $[-1\,,\,1]$.
What follows is formulated in terms of $P$ but applies as well to $Q$ and $P_*$ and
submatrices obtained by restricting them to subsets of the respective vertex sets.
We start with a little general lemma that will be used later.

\begin{lem}\label{lem:lambda}
For every $\lambda \in  [-1\,,\,1]$ there are a bipartite graph $(X,E_X)$ and an associated 
reversible transition matrix $P$ such that $\lambda \in \spec(P)$. 
\end{lem}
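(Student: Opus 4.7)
The plan is to construct, for every $\lambda\in[-1,1]$, an explicit bipartite weighted graph whose stochastic transition matrix has $\lambda$ as an eigenvalue, split over a few cases.

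First I would settle the boundary values: for $\lambda=1$ the constant function is an eigenfunction on any connected graph, and for $\lambda=-1$ the $\pm1$-indicator of the bipartition is an eigenfunction on any connected bipartite graph (take, say, a single edge). For $\lambda=0$ I would take the three-vertex path $v_0\,{-}\,v_1\,{-}\,v_2$ (bipartite, with parts $\{v_0,v_2\}$ and $\{v_1\}$) with any positive edge weights; its $3\times3$ transition matrix has characteristic polynomial $\mu(\mu-1)(\mu+1)$ independently of the weights, so $0\in\spec(P)$.

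For the remaining $\lambda\in(-1,1)\setminus\{0\}$ I would use the four-vertex path $v_0\,{-}\,v_1\,{-}\,v_2\,{-}\,v_3$, still bipartite, with edge weights $1,\,t,\,1$ and a tunable parameter $t>0$. The reflection $\sigma:v_i\mapsto v_{3-i}$ preserves the weights and hence commutes with $P$, so eigenfunctions split into $\sigma$-symmetric and $\sigma$-antisymmetric parts. On symmetric vectors $(a,b,b,a)$ the eigenvalue equation reduces to the quadratic $(1+t)\mu^{2}-t\mu-1=(\mu-1)\bigl((1+t)\mu+1\bigr)=0$, giving $\mu\in\{1,-1/(1+t)\}$; on antisymmetric vectors $(a,b,-b,-a)$ it becomes $(1+t)\mu^{2}+t\mu-1=(\mu+1)\bigl((1+t)\mu-1\bigr)=0$, giving $\mu\in\{-1,1/(1+t)\}$. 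Hence $\spec(P)=\{\pm 1,\,\pm 1/(1+t)\}$. I would then choose $t:=1/|\lambda|-1$, which is strictly positive because $|\lambda|\in(0,1)$; this makes $1/(1+t)=|\lambda|$, and by bipartiteness $-|\lambda|$ also lies in $\spec(P)$, so in particular $\lambda\in\spec(P)$.

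The construction is elementary and there is no serious obstacle. The only step needing a pinch of care is the eigenvalue computation for the weighted four-path, which is rendered transparent by the reflection symmetry and the resulting factorisation of the two quadratics.
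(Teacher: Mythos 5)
Your proof is correct, but it takes a genuinely different route from the paper. The paper works with the one-parameter family $P_{a,2N}$ of weighted $2N$-circles, pins down the second eigenvalue at the two extreme weights $a=1$ (SRW on the circle, $\lambda_1=\cos\frac{\pi}{N}$) and $a=0$ (SRW on the $2N$-path, $\lambda_1=\cos\frac{\pi}{2N-1}$), and then uses continuity in $a$ plus the covering $\bigcup_N \bigl[\cos\frac{\pi}{N}, \cos\frac{\pi}{2N-1}\bigr]=[0,1)$ and the spectral symmetry of bipartite graphs. You instead exhibit a single explicit witness of fixed size: the weighted $4$-path with conductances $1,t,1$, whose spectrum you compute in closed form as $\{\pm 1, \pm 1/(1+t)\}$ via the reflection symmetry, so that $t=1/|\lambda|-1$ hits any $\lambda\in(-1,1)\setminus\{0\}$ directly (and your boundary cases $\lambda\in\{0,\pm1\}$ are handled correctly; I checked the two factorisations and they are right). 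Your argument is more elementary and fully explicit -- no intermediate-value step, no appeal to the known eigenvalues of SRW on circles and paths -- and it produces a four-vertex graph rather than an arbitrarily large circle; the paper's approach has the mild advantage that the witness is (generically) an even circle, which is the phrasing echoed later in the proof of Lemma \ref{lem:psine0}, but all that is actually used downstream is the existence of \emph{some} bipartite reversible $(X,P)$ with $\lambda\in\spec(P)$, which your construction supplies. One cosmetic remark: since both $\pm 1/(1+t)$ already appear in your computed spectrum, the final appeal to bipartiteness to obtain $-|\lambda|$ is redundant, though harmless.
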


\begin{proof} The graph is the $2N$-circle with vertices $x_0\,, \dots, x_{2N-1}$ and edges 
$[x_{i}\,,x_{i+1}]$ with indices modulo $2N$, where $N\ge 2$. For a parameter $a \in [0\,,\,1]$, 
the edge conductances are $a(x_{i}\,,x_{i+1}) =1$ for $i=0\,\dots, 2N-2$ and $a(x_{2N-1}\,,x_0)=a$.
We write $P_{a,2N}$ for the resulting transition matrix, and we want to locate its second largest
eigenvalue $\lambda_1$ for $0 \le a \le 1$ (the largest one is $\lambda_0=1$). 
In the standard case $a = 1$ we get the simple random walk on the
$2N$-circle, and it is well-known that $\lambda_1 = \cos \frac{\pi}{N}$. 
If $a=0$ then this means that the edge $[x_{2N-1}\,,x_0]$ is deleted, and the graph is reduced to 
the line graph with $2N$ vertices, while $P_{0,2N}$ is simple random walk on that graph with reflecting
end-vertices, compare with \cite[Example 5.6]{WMarkov}. In this case, $\lambda_1 = \cos\frac{\pi}{2N-1}$. 
By continuity, for every $\lambda \in J_N= \bigl[\cos \frac{\pi}{N}\,,\,\cos \frac{\pi}{2N-1}\bigr]$ 
there is $a \in [0\,,\,1]$ such that $\lambda_1(P_{a,2N}) = \lambda$. Now 
$\bigcup_{N=2}^{\infty} J_{N} = [0\,,\,1)$. Therefore each $\lambda \in [0\,,\,1)$ appears as 
$\lambda_2(P_{a,2N})$ for some $N$ and $a$. Of course, $\lambda=1$ is always an eigenvalue, and since 
the $2N$-circle is bipartite, the spectrum is symmetric, concluding the proof.
\end{proof}

For  $z \in \C \setminus \spec(P)$, the inverse
$$
\Gc(z) = (z\, I - P)^{-1}
$$
is well defined as the resolvent of the operator $P$ on $\ell_2(X,m_X)$.
It can be viewed as a matrix over $X$, whose $(x,y)$-matrix element is denoted
$G(x,y|z) = \Gc(z)\uno_y(x)$ and often called the Green function or Green kernel of $P$.
 The norm = spectral radius of the operator $P$ is
$$
\lambda_0(P) = \limsup_{n \to \infty} p^{(n)}(x,y)^{1/n}\,, 
$$
where for $p^{(n)}(x,y) = \Prob_x[Z_n=y]$ is the $(x,y)$ element of the 
$n^{\textrm{th}}$ matrix power $P^n$. This is the largest 
eigenvalue of $P$, often also denoted $\rho(P)$. Since $X$ is assumed to be finite and 
$P$ is stochastic, we have 
$\lambda_0(P)=1$, but  its restriction to a subset will no more be stochastic, and in this case
the spectral radius is $< 1$.
Then 
$$
G(x,y|z) = \sum_{n=0}^{\infty} p^{(n)}(x,y)\, \frac{1}{z^{n+1}} \quad \text{for }\; z \in \C\,,\;
|z| > \lambda_0(P).
$$
The following is well-known.

\begin{lem}\label{lem:poles} If $P$ is stochastic, $|X|=N$ and 
$$
1= \lambda_0 > \lambda_1 \ge \dots \ge \lambda_{N-2} \ge \lambda_{N-1} \ge -1
$$ 
are the eigenvalues of $P$ (which are all real), then
let $h_0\,,\dots, h_{N-1}$ be an orthonormal system of real-valued 
eigenfunctions (= right eigenvectors) of $P$ in $\ell^2(X,m_X)$.
That is,  $Ph_i = \lambda_i \, h_i$ and 
\begin{equation}\label{eq:delta}
\sum_{x \in X} h_i(x) h_j(x)\,m(x) = \delta_{i,j} \AND
\sum_{i=0}^{N-1} h_i(x)h_i(y)\, m(y) = \delta_{x,y}\,;
\end{equation}
in particular, $h_1 \equiv 1/m_X(X)$. 
Then for $x,y \in X$,
\begin{equation}\label{eq:Gxyz}
G(x,y|z) = \sum_{i=0}^{N-1} h_i(x)h_i(y)\, m(y) \, \frac{1}{z-\lambda_i}\,.
\end{equation}
Thus, all poles of $G(x,y|z)$ are simple 
and contained in $\spec(P)$. Furthermore,
$$
\sum_{x \in X} G(x,x|z) = \sum_{j=1}^{N} \frac{1}{z-\lambda_j}.
$$
\end{lem}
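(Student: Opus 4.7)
The plan is to exploit the self-adjointness of $P$ on $\ell_2(X,m_X)$, which follows from reversibility, so that the spectral theorem supplies the orthonormal eigenbasis $h_0,\dots,h_{N-1}$ already invoked in the statement. Both formulas should then fall out of a single spectral expansion of the resolvent $\Gc(z)$.

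In detail, the second identity in \eqref{eq:delta} is exactly the expansion $\uno_y = \sum_i h_i(y)\,m_X(y)\,h_i$, because $\uno_y(x)=\delta_{x,y}$. Since $z\notin\spec(P)$, applying $\Gc(z)=(zI-P)^{-1}$ termwise---each $h_i$ being an eigenfunction of $P$ with eigenvalue $\lambda_i\neq z$---gives
$$
\Gc(z)\uno_y \;=\; \sum_{i=0}^{N-1}\frac{h_i(y)\,m_X(y)}{z-\lambda_i}\,h_i,
$$
and evaluation at $x$ yields \eqref{eq:Gxyz}. The displayed rational function has only simple poles, located among the $\lambda_i$, which proves the assertion about the poles. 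The trace formula follows by setting $y=x$ in \eqref{eq:Gxyz}, summing over $x\in X$, interchanging the two summations, and using the first relation in \eqref{eq:delta} to collapse $\sum_x h_i(x)^2\,m_X(x)$ to $1$.

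The only point requiring a bit of care is the placement of the weight $m_X(y)$ in \eqref{eq:Gxyz}: it reflects that $P$ is symmetric with respect to $m_X$ rather than counting measure, so the orthogonal projection onto each eigenline carries a factor $m_X(y)$. Consistency with the Neumann series $G(x,y|z) = \sum_n p^{(n)}(x,y)\,z^{-n-1}$ on $|z|>1$ is automatic, by expanding each $(z-\lambda_i)^{-1}$ geometrically and reading off the spectral formula for $p^{(n)}(x,y)$. Apart from this bookkeeping there is no real obstacle; the lemma is a standard finite-dimensional consequence of the spectral theorem for the self-adjoint operator $P$.
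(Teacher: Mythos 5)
Your proof is correct and takes essentially the same route as the paper: the paper writes the diagonalization in matrix form, $P=H\,D\,H^{-1}$ with $\Gc(z)=H\,(zI-D)^{-1}H^{-1}$, whereas you expand $\uno_y$ in the eigenbasis via the second identity in \eqref{eq:delta} and apply the resolvent termwise — the same computation in operator language. The only point the paper adds explicitly (and you omit, harmlessly) is the one-line justification that the eigenvalues are real by reversibility and that $\lambda_0=1$ is simple by Perron--Frobenius.
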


\begin{proof}
The fact that the eigenvalues are all real follows from reversibility, and that 
$1$ is a simple eigenvalue is a well-known consequence of the Perron-Frobenius theorem.

Let $H$ be the $(X \times \{0,\dots,N-1\})$-matrix whose $(x,i)$-entries are $h_i(x)$. 
Then $H^{-1}$ is the $(\{0,\dots,N-1\} \times X)$-matrix whose $(j,y)$ entries are
$h_j(y) m(y)$. If $D$ is the $(\{0,\dots,N-1\} \times \{0,\dots,N-1\})$-diagonal matrix
whose $(i,i)$-entries are $\lambda_i\,$, then 
$$
P = H\,D\,H^{-1}\,,\quad \text{whence} \quad  \Gc(z) = H\,(z\, I - D)^{-1}\,H^{-1}.
$$
This proves the formula for $G(x,y|z)$, and \eqref{eq:delta} yields the formula for
the trace of $\Gc(z)$.
\end{proof}

\begin{dfn}\label{def:locspec}
The \emph{local spectrum at} $x \in X$ is the set $\mathcal{S}_x=\mathcal{S}_x(P)$ 
of poles of $G(x,x|z)$. 
\end{dfn}

By Lemma \ref{lem:poles}, 
$$
\spec(P) = \bigcup_{x \in X} \mathcal{S}_x\,.
$$

The following is another consequence of Lemma \ref{lem:poles}.

\begin{lem}\label{lem:poles2}
We have $\lambda \in  \mathcal{S}_x$ if and only if there is an eigenfunction
$f$ with $Pf = \lambda\, f$ and $f(x) \ne 0$.
\end{lem}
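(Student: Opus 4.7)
The plan is to read everything off the spectral expansion \eqref{eq:Gxyz} of $G(x,x|z)$ supplied by Lemma \ref{lem:poles}. Writing the distinct eigenvalues of $P$ as $\mu_1,\dots,\mu_k$ and the corresponding eigenspaces as $\mathcal{H}_{\mu_\ell}$, I would group the summands in \eqref{eq:Gxyz} by eigenvalue and obtain
$$
G(x,x|z) \;=\; \sum_{\ell=1}^{k} \frac{r_\ell(x)}{z-\mu_\ell}, \qquad r_\ell(x) \;=\; m_X(x)\sum_{j:\,\lambda_j=\mu_\ell} h_j(x)^2.
$$
Since the $\mu_\ell$ are pairwise distinct, $\mu_\ell$ is a pole of $G(x,x|z)$ precisely when the residue $r_\ell(x)$ is nonzero; and since each $h_j$ is real, $r_\ell(x)\neq 0$ iff there exists some $j$ with $\lambda_j = \mu_\ell$ and $h_j(x)\neq 0$.

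For the forward direction, if $\lambda \in \mathcal{S}_x$, then $\lambda = \mu_\ell$ for some $\ell$ and $r_\ell(x) \neq 0$, so the above observation produces an eigenfunction $h_j$ with $Ph_j = \lambda h_j$ and $h_j(x)\neq 0$. For the converse, suppose $Pf = \lambda f$ with $f(x)\neq 0$. Then $\lambda$ must be one of the $\mu_\ell$, and $f$ lies in $\mathcal{H}_{\mu_\ell}$, so I can expand $f = \sum_{j:\,\lambda_j=\mu_\ell} c_j h_j$ in the chosen orthonormal basis. Since $f(x)\neq 0$, at least one $h_j(x)$ in that sum is nonzero, making $r_\ell(x)>0$ and so $\mu_\ell = \lambda \in \mathcal{S}_x$.

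There is no real obstacle here; the whole content is the observation that $r_\ell(x)$ is a \emph{sum of squares} times the positive weight $m_X(x)$, so no cancellations can hide a pole. The only minor point worth recording carefully is that the sum defining $r_\ell(x)$ is intrinsic: although the orthonormal basis $h_j$ restricted to $\mathcal{H}_{\mu_\ell}$ is not unique, the value $\sum_j h_j(x)^2$ equals $K_{\mu_\ell}(x,x)/m_X(x)$, where $K_{\mu_\ell}$ is the reproducing (orthogonal projection) kernel of $\mathcal{H}_{\mu_\ell}$ in $\ell^2(X,m_X)$; this makes the equivalence with the existence of \emph{any} eigenfunction (not just one from the distinguished basis) non-vanishing at $x$ transparent, and completes the argument.
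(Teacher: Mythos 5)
Your argument is correct and is essentially the paper's own proof: both read the result off the spectral expansion \eqref{eq:Gxyz}, noting that the residue of $G(x,x|z)$ at $\lambda$ is $m(x)\sum_{i:\lambda_i=\lambda}h_i(x)^2$, a sum of squares that cannot vanish by cancellation. Your extra remark that the residue is basis-independent (it is the diagonal of the orthogonal projection onto the $\lambda$-eigenspace) makes the converse direction fully explicit where the paper leaves it implicit, but this is a refinement of the same argument rather than a different route.
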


\begin{proof}
The coefficient of $1/(\lambda - z)$ in the expression  \eqref{eq:Gxyz} for $G(x,x|z)$ is
$
\sum_{i : \lambda_i = \lambda} m(x)h_i(x)^2\,.
$
It is non-zero if and only of $h_i(x) \ne 0$ for at least one of those $i$. 
\end{proof}

Next, we define
$$
F(x,y|z) = \frac{G(x,y|z)}{G(y,y|z)}.
$$
Since $X$ is finite, these functions are rational. In particular, the last fraction will
allow some cancellations, and $z=1$ is not a pole of $F(x,y|z)$.
In probabilistic terms, let
$$
f^{(n)}(x,y) = \Prob_x[Z_n = y\,, Z_k \ne y \;\text{for}\; 0 \le k < n]
$$
be the probability that the first arrival of $(Z_n)$ in $y$ occurs at time $n$, when the starting 
point is $x$. In particular, $f^{(0)}(x,x)=1$ and $f^{(n)}(x,x)=0$ for $n \ge 1$. Then
\begin{equation}\label{eq:F1}
F(x,y|z) = \sum_{n=0}^{\infty} f^{(n)}(x,y)\, \frac{1}{z^n} \quad \text{for }\; z \in \C\,,\;
|z| > \lambda_0(P).
\end{equation}
Consider the restriction $P_{X\!-y}$ of $P$ to $X \setminus \{y\}$. Outside of 
$X \setminus \{y\}$,
we set its matrix entries to $0$. This is also self-adjoint on $\ell^2(X,m_X)$.
As above, there is the associated resolvent $\Gc_{X\!-y}(z)$. We have 
$\lambda_0(P)=1 > \lambda_0(P_{X\!-y})$.
The power series in \eqref{eq:F1} 
converges for $|z| >  \lambda_0(P_{X\!-y})$,  since in this case
\begin{equation}\label{eq:F2}
F(x,y|z) = \sum_{w \in X \setminus \{y\}} G_{X\!-y}(x,w|z) p(w,y) \quad \text{when }\, x \ne y.
\end{equation}
By analytic continuation, this formula holds for all 
$z \in \C \setminus \spec(P_{X\!-y})$.\footnote{We shall often use equations derived via combinatorics 
of paths and their probabilistic counterpart as in \cite{WMarkov}, a priori valid for $|z| > 1$,
which then remain valid  by analytic continuation in $\C$ outside the respective set of 
poles.\label{fn:fn2}}
Similarly, we define $U(x,x|z)$ via
\begin{equation}\label{eq:U1}
zG(x,x|z) = \frac{1}{1 - U(x,x|z)}.
\end{equation}
If 
$$
u^{(n)}(x,x) = \Prob_x[Z_n = x\,, Z_k \ne x \;\text{for}\; 1 \le k < n]
$$
denotes the probability that the random walk starting at $x$ first returns to $x$
at time $n$, then for $|z| > \lambda_0(P_{X\!-x})$
$$
U(x,x|z) = \sum_{n=1}^{\infty} u^{(n)}(x,y)\, \frac{1}{z^n}\,,
$$
We then have (recalling that $p(x,x)=0$ by the assumed absence of loops)
\begin{equation}\label{eq:U2}
zU(x,x|z) = \sum_y p(x,y) F(y,x|z) = %p(x,x) + 
\sum_{y, w \in X \setminus \{x\}} p(x,y) 
G_{X\!-x}(y,w|z) p(w,x)
\end{equation}
by \eqref{eq:F2}.

%where $z \in \C$.
The analogous quantities and functions for $P_*$ will carry the index $*$, and those for $Q$ 
will carry the index~$V$. 
In a large body of work, the $n^{\textrm{th}}$ term of the above power series is paired with $z^n$,
see e.g. \cite{WMarkov}.
Here, it is more convenient to use the reciprocal.

%With the notation of \S \ref{sec:RW},  for any starting point $x \in X \subset X[V]$,
%resp. the special starting point $a \in V$,
%$$
%\Prob_x[\tb_1 = n] = \Prob_a[\tau = n] = f_{V}^{(n)}(a,b)\,.
%$$
%
The %associated 
function
\begin{equation}\label{eq:phi}
\varphi(z) = \frac{1}{F_{V}(a,b|z)},
\end{equation}
will be of crucial importance. For this rational function, we have in particular 
$\varphi(0)=0$ and $\varphi(\infty) = \infty$. (The rational functions appearing here
should be considered on $\widehat \C = \C \cup \{ \infty\}$.)

Let $Q_{V\!-b}$ be the
restriction of $Q$ to $V \setminus \{b\}$. It is strictly substochastic in at least one row,
and again reversible. We write $G_{V\!-b}(u,v|z)$ for the $(u,v)$-element of 
$\mathcal{G}_{V\!-b}(z) = (z\,I_{V\!-b} - Q_{V\!-b})^{-1}$, where $u, v \in V\!\!-\!b$, and we extend
this to $V$ by assigning value $0$ when $u=b$ or $v=b$. (We use the same notation with $a$ 
in the place of $b$.) Then by \eqref{eq:F2}
\begin{equation}\label{phiG}
1/\varphi(z) %= F_{V}(a,b|z) 
= \sum_{v \in V\!\!-\!b} G_{V\!-b}(a,v|z) q(v,b).
\end{equation}

Similarly to the above notation we have already been writing $Q_{V^o}$ for the restriction of $Q$ 
to $V^o$ and $G_{V^o}(u,v|z)$ for the associated Green kernel.
Note that for $|z| > \lambda_0(Q_{V\!-b}) = \lambda_0(Q_{V\!-a})$ and $u, v \in V \setminus \{ b\}$
$$
G_{V\!-b}(u,v|z) =
\sum_{n=0}^{\infty} \Prob_u[Y_n = v\,,Y_k \ne b \;\text{for}\; k<n]\,\frac{1}{z^{n+1}}\,,
$$
and analogously for $|z| > \lambda_0(Q_{V^o})$ and $u, v \in V^o$
$$
G_{V^o}(u,v|z) =
\sum_{n=0}^{\infty} \Prob_u[Y_n = v\,,Y_k \ne a, b \;\text{for}\; k<n]\,\frac{1}{z^{n+1}}\,.
$$
We shall also need
\begin{equation}\label{eq:psi-theta}
\begin{aligned}
\psi(z) &= \sum_{u \in V \setminus \{ a\}} q(a,u) F_{V\!-a}(u,b|z)
= \sum_{u,v \in V^o} q(a,u) G_{V^o}(u,v|z) q(v,b) + q(a,b) \quad \text{and}\\
\theta(z) &= \sum_{u \in V \setminus \{ b\}} q(a,u) F_{V\!-b}(u,a|z) =
%q(a,a) + 
\sum\limits_{u, v \in V^o} q(a,u) G_{V^o}(u,v|z) q(v,a)\,,
\end{aligned}
%  \varphi(z)\,G_{V\!-b}(a,a|z).
\end{equation}
compare with formula \eqref{eq:F2}.

\begin{lem}\label{lem:Fab}
If $z \notin \spec(Q_{V^o})$ then %$F_o(a,b|z)$ is finite, and
$$
F_{V}(a,b|z) = G_{V\!-b}(a,a|z)\sum_{u \in V \setminus \{ a\}} q(a,u) F_{V\!-a}(u,b|z)\\
%= \sum_{u \in V \setminus \{ a\}} q(a,u)G_{V\!-a}(u,b|z)\,,
=\frac{\psi(z)}{z - \theta(z)}\,,
$$
which is finite except possibly when $\theta(z)=z$, %the denominator vanishes, 
in which case $z \in \spec(Q_{V\!-b})$.
%where we can write 
%$$
%\frac{1}{G_{V\!-b}(a,a|z)} = z - \sum_{u \in V^o} q(a,u) F_{V\!-b}(u,a|z) 
%= z - \sum_{u, v \in V^o} q(a,u) G_{V^o}(u,v|z) q(v,a)\,, 
%$$
%and the analogous formula holds when $a$ and $b$ are exchanged.
\end{lem}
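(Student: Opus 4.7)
The plan is to establish the two equalities separately, first reasoning probabilistically for $|z|$ large (so that all relevant power series converge) and then invoking the analytic-continuation principle of footnote~\ref{fn:fn2} to extend to all $z \notin \spec(Q_{V^o})$.

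For the first equality, I would use a last-exit decomposition of a first-passage path from $a$ to $b$ in $V$. Any such path splits uniquely at its last visit to $a$ before hitting $b$: until that moment it is a walk from $a$ to $a$ inside $V \setminus \{b\}$, next it takes one step from $a$ to some vertex $u \in V \setminus \{a\}$, and finally it performs a first-passage walk from $u$ to $b$ inside $V \setminus \{a\}$. Translating into generating functions and accounting for the $1/z$ bookkeeping difference between the $G$- and $F$-conventions (the latter has no initial factor $1/z$, the former does), these three pieces contribute $z\, G_{V\!-b}(a,a|z)$, $q(a,u)/z$ and $F_{V\!-a}(u,b|z)$, respectively. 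Summing over $u$ gives
$$
F_V(a,b|z) = G_{V\!-b}(a,a|z) \sum_{u \in V \setminus \{a\}} q(a,u)\, F_{V\!-a}(u,b|z).
$$
The case $u=b$ (possible when $q(a,b)>0$) is handled automatically since $F_{V\!-a}(b,b|z)=1$.

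For the second equality, I would compute $G_{V\!-b}(a,a|z)$ via the return-time identity. Applying \eqref{eq:U1} to the substochastic chain on $V \setminus \{b\}$ yields
$$
z\, G_{V\!-b}(a,a|z) = \frac{1}{1 - U_{V\!-b}(a,a|z)},
$$
and, by \eqref{eq:U2} combined with \eqref{eq:F2} applied to the same restricted chain with $y=a$ (noting that neighbors of $a$ inside $V \setminus \{b\}$ necessarily lie in $V^o$, since loops are absent), one obtains
$$
z\, U_{V\!-b}(a,a|z) = \sum_{u,v \in V^o} q(a,u)\, G_{V^o}(u,v|z)\, q(v,a) = \theta(z).
$$
Therefore $G_{V\!-b}(a,a|z) = 1/\bigl(z - \theta(z)\bigr)$, and since by definition \eqref{eq:psi-theta} the inner sum in the first equality is exactly $\psi(z)$, the identity $F_V(a,b|z) = \psi(z)/\bigl(z - \theta(z)\bigr)$ follows.

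For the finiteness assertion: under the standing hypothesis $z \notin \spec(Q_{V^o})$, the kernel $G_{V^o}(u,v|z)$ is finite, so $\psi(z)$ and $\theta(z)$ are both finite by \eqref{eq:psi-theta}; hence the displayed expression is finite except possibly when $z - \theta(z) = 0$, and in that case $G_{V\!-b}(a,a|z)$ has a pole at $z$, which by Lemma~\ref{lem:poles} places $z$ in $\spec(Q_{V\!-b})$. The main obstacle I foresee is purely bookkeeping: keeping track of the conventions for $G$ versus $F$ (an extra factor $1/z$) and ensuring that the last-exit decomposition treats the degenerate case $u=b$ correctly; the probabilistic identities themselves are direct consequences of the formulas collected earlier in this section.
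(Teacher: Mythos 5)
Your proposal is correct and follows essentially the same route as the paper: the last-exit decomposition at the final visit to $a$, the identity $zU_{V\!-b}(a,a|z)=\theta(z)$ giving $G_{V\!-b}(a,a|z)=1/\bigl(z-\theta(z)\bigr)$, and analytic continuation from $|z|>1$. Your extra bookkeeping on the $1/z$ conventions and the degenerate case $u=b$ is a welcome elaboration of details the paper leaves implicit.
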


\begin{proof}
For $|z| > 1$, the first formula is obtained via ``combinatorics of paths''.
Any path in $V$ that starts at $a$ and visits $b$ only at the end can be decomposed
as follows: it first may return a number of times to $a$ without visiting $b$,
until the last visit in $a$. Second, it moves to some $u \in V \setminus \{a\}$, and 
continues from there without returning to $a$ until the first arrival in $b$. The 
first part leads to the term
\begin{equation}\label{eq:GVb}
G_{V\!-b}(a,a|z) = \frac{1}{z - zU_{V\!-b}(a,a|z)} \quad \text{with}\quad
zU_{V\!-b}(a,a|z) = \theta(z)\,,
\end{equation}
compare with \eqref{eq:U2}. For the second part, see \eqref{eq:psi-theta}, resp.
\eqref{eq:F2}. Again, the formulas extend to $\widehat \C \setminus \spec(Q_{V^o})$ by analytic
continuation.
\end{proof}

For the following, recall that we assume connectedness of $V \setminus \{ b\}$.

\begin{pro}\label{pro:phiprop}
 The rational function $\varphi(z)$ is finite, whence continuous, in the 
interval $(\lambda_0(Q_{V_o})\,,\,1]$, and strictly increasing at least
in the interval $[\lambda_0(Q_{V\!-b})\,,\,1]$. We have 
 $$
 \lambda_0(Q_{V_o}) < \lambda_0(Q_{V\!-b}) < 1\,, \quad %\lambda_0(Q_{V\!-b})
 \varphi(1)=1 \AND \varphi\bigl(\lambda_0(Q_{V\!-b})\bigr) =  0.
 $$
If in addition $V^o$ is connected then $\varphi\bigl(\lambda_0(Q_{V^o})\bigr) =  -1$.
\end{pro}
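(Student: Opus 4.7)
The statement has four pieces: (a) the strict spectral inequalities $\lambda_0(Q_{V^o}) < \lambda_0(Q_{V\!-b}) < 1$; (b) the value $\varphi(1)=1$, the value $\varphi(\lambda_0(Q_{V\!-b}))=0$, and strict monotonicity on $[\lambda_0(Q_{V\!-b}),1]$; (c) finiteness of $\varphi$ on the entire interval $(\lambda_0(Q_{V^o}),1]$; and (d) the identity $\varphi(\lambda_0(Q_{V^o}))=-1$ when $V^o$ is connected. I expect (d) to be the main obstacle. For (a), connectedness of $V\!-b$ makes $Q_{V\!-b}$ a proper irreducible principal submatrix of the irreducible stochastic matrix $Q$, so strict Perron--Frobenius monotonicity for proper principal submatrices of an irreducible nonnegative matrix gives $\lambda_0(Q_{V\!-b})<\lambda_0(Q)=1$; applying the same fact to $Q_{V^o}$ inside $Q_{V\!-b}$ yields $\lambda_0(Q_{V^o})<\lambda_0(Q_{V\!-b})$. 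The value $\varphi(1)=1$ is immediate since $F_V(a,b|1)$ is the hitting probability of $b$ from $a$ for the finite irreducible chain governed by $Q$, which equals $1$.

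For (b) I would use \eqref{phiG}, $1/\varphi(z)=\sum_v G_{V\!-b}(a,v|z)\,q(v,b)$. Each $G_{V\!-b}(a,v|z)$ expands as a power series in $1/z$ with nonnegative coefficients, so for real $z>\lambda_0(Q_{V\!-b})$ the right-hand side is positive and strictly decreasing in $z$ (connectedness of $V\!-b$ ensures at least one $v$ adjacent to $b$ contributes a strictly decreasing summand). This delivers finiteness, positivity, and strict monotonicity of $\varphi$ on $(\lambda_0(Q_{V\!-b}),1]$. For the left endpoint, the spectral expansion of Lemma~\ref{lem:poles} applied to the irreducible $Q_{V\!-b}$---whose Perron eigenfunction $h_0$ is strictly positive---shows the residue of $1/\varphi$ at $\lambda_0(Q_{V\!-b})$ equals $h_0(a)\sum_v h_0(v)\,m_V(v)\,q(v,b)>0$, so $1/\varphi\to+\infty$ and hence $\varphi(\lambda_0(Q_{V\!-b}))=0$; combined with strict monotonicity on the open part this extends to strict increase on $[\lambda_0(Q_{V\!-b}),1]$.

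For (c) I switch to $\varphi(z)=(z-\theta(z))/\psi(z)$ from Lemma~\ref{lem:Fab}. On $(\lambda_0(Q_{V^o}),\infty)$ both $\psi$ and $\theta$ are finite, since they are built from $G_{V^o}$, so finiteness of $\varphi$ reduces to showing $\psi(z)\ne 0$. Representation \eqref{eq:psi-theta},
\[
\psi(z)=\sum_{u,v\in V^o}q(a,u)\,G_{V^o}(u,v|z)\,q(v,b)+q(a,b),
\]
has all summands nonnegative for real $z>\lambda_0(Q_{V^o})$, with strict positivity coming either from $q(a,b)>0$ or, when $q(a,b)=0$, from any interior path $a\to u_1\to\dots\to u_k\to b$ with $u_1,\dots,u_k\in V^o$ supplied by connectedness of $V$.

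Finally, for (d) with $V^o$ connected, $Q_{V^o}$ is irreducible and its Perron eigenspace is one-dimensional, spanned by a strictly positive $h_0^o$. Since $\gamma$ preserves conductances and $\{a,b\}$, it preserves $Q_{V^o}$, so $h_0^o\circ\gamma$ is again a positive Perron eigenfunction and $h_0^o\circ\gamma=c\,h_0^o$ with $c>0$; iterating and using that $\gamma$ has finite order on the finite set $V$ forces $c=1$. This $\gamma$-invariance, together with $q(\gamma v,a)=q(v,b)$ and $m_V(\gamma v)=m_V(v)$, shows that the residues of $\psi$ and $\theta$ at $\lambda_0(Q_{V^o})$ coincide and equal the strictly positive number
\[
\Big(\sum_u q(a,u)\,h_0^o(u)\Big)\Big(\sum_v h_0^o(v)\,m_V(v)\,q(v,b)\Big).
\]
Hence $z/\psi(z)\to 0$ and $\theta(z)/\psi(z)\to 1$ as $z\downarrow\lambda_0(Q_{V^o})$, giving $\varphi(z)=z/\psi(z)-\theta(z)/\psi(z)\to -1$; continuity of the rational function concludes $\varphi(\lambda_0(Q_{V^o}))=-1$.
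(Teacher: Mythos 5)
Your proof is correct and follows essentially the same route as the paper's: Perron--Frobenius for the strict spectral inequalities, positivity of the power-series coefficients of the Green kernels for finiteness and monotonicity, and a residue computation at $\lambda_0(Q_{V^o})$ using $\gamma$-invariance of the Perron eigenfunction together with reversibility to obtain the value $-1$. The only cosmetic differences are that you derive the monotonicity and the zero at $\lambda_0(Q_{V\!-b})$ from $1/\varphi = F_V(a,b|\cdot)$ directly rather than from the representation $\varphi(z)=\bigl(z-\theta(z)\bigr)/\psi(z)$ with increasing numerator and decreasing denominator, and you split the final limit into two ratios with equal residues instead of computing the single fraction as in the paper.
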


\begin{proof} The matrices $Q$ and $Q_{V\!-b}$ are irreducible, so that the two
 strict inequalities follow from the Perron-Frobenius Theorem  (even beyond
reversibility), see {\sc Seneta}~\cite{Se}. 

It s clear that $\varphi(1)=1$.

Recall the formulas \eqref{eq:psi-theta} for $\psi(z)$ and $\theta(z)$. For $|z| > \lambda_0(Q_{V_o})$,
the functions $G_{V^o}(u,v|z)$ are convergent power series with non-negative coefficients in the 
variable $1/z$, so that $\psi(z)$ and $\theta(z)$ are positive and continuous in the
interval $(\lambda_0(Q_{V_o})\,,\,1]$. Furthermore, $z = \lambda_0(Q_{V\!-b})$ is a pole of 
$G_{V\!-b}(a,a|z)$, whence by \eqref{eq:GVb} it is a zero of $z - \theta(z)$. 
This yields $\varphi\bigl(\lambda_0(Q_{V\!-b})\bigr) =  0$. Since $z-\theta(z)$ is monotone increasing 
and $\psi(z)$ is decreasing for $z > \lambda_0(Q_{V_o})$, it is clear that $\phi(z)$ is increasing 
in $[\lambda_0(Q_{V\!-b})\,,\,1]$. 
%(For smaller $z$, the change of sign of $z-\theta(z)$ requires more care.)

Now assume that $V^o$ is connected. 
For this proof, let us write  $\lambda_0=\lambda_0(Q_{V^o})$. Lemma \ref{lem:poles} and \eqref{eq:Gxyz} 
apply here with the only exception that $\lambda_0 \ne 1$.
Again by the Perron-Frobenius Theorem,  $\lambda_0$ is a simple eigenvalue of $Q_{V^o}$, 
and the associated normalised eigenfunction $h_0$ is $> 0$ on all of $Q_{V^o}$. Furthermore,
the latter is invariant under $\gamma$, so that $Qh_0(a) = \sum_v q(a,v)h_0(v) = Qh_0(b)$.
The expansion formula \eqref{eq:Gxyz} implies
$$
\lim_{z \to \lambda_0} (z-\lambda_0) G_{V^o}(u,v|z) = h_0(u)h_0(v)m_V(v)\,.
$$
Therefore (with $u, v$ ranging in $V^o$), using reversibility 
$$
\begin{aligned}
\lim_{z \to \lambda_0} \varphi(z) &= 
\lim_{z \to \lambda_0} \frac{(z-\lambda_0)z - \sum_{u,v} q(a,u)\,(z-\lambda_0) G_{V^o}(u,v|z)\,q(v,a)}
{(z-\lambda_0)q(a,b) - \sum_{u,v} q(a,u)\,(z-\lambda_0) G_{V^o}(u,v|z)\,q(v,b)}\\[3pt]
&=-\frac{\sum_{u,v} q(a,u)h_0(u)\,h_0(v)m_V(v)q(v,a)}{\sum_{u,v} q(a,u)h_0(u)\,h_0(v)m_V(v)q(v,b)}\\[3pt]
&=-\frac{\sum_{u,v} q(a,u)h_0(u)\,h_0(v)m_V(a)q(a,v)}{\sum_{u,v} q(a,u)h_0(u)\,h_0(v)m_V(b)q(b,v)}\\[3pt]
&=-\frac{m_V(a)\bigl(Qh_0(a)\bigr)^2}{m_V(b)Qh_0(a)Qh_0(b)} = -1\,. 
\end{aligned}
$$
This concludes the proof.
\end{proof}

In certain cases, the last proposition may also be valid when $V^o$ is not connected. See 
e.g. Example \ref{exa:mple2} in \S \ref{sec:examples} below, where $V^o$ has two components
which are isomorphic.

The following formula is quite important for us.

\begin{pro}\label{pro:G} For $x,y \in X \subset X[V]$,
$$
G_*(x,y|z) = G\bigl(x,y|\varphi(z)\bigr)\big/\psi(z).
$$
\end{pro}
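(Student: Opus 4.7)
The plan is to decompose each trajectory of the walk on $X[V]$ that starts and ends in $X$ into a sequence of \emph{excursions} through copies of $V$ between consecutive visits to $X$-vertices, and then to identify the resulting generating function on $X \times X$ as the resolvent of $P$ evaluated at $\varphi(z)$, rescaled by $\psi(z)$.

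First, I would reinterpret the auxiliary functions $\theta(z)$ and $\psi(z)$ of \eqref{eq:psi-theta} in terms of the hitting time $\tau = \inf\{k \ge 1 : Y_k \in \{a,b\}\}$ of the walk $(Y_n)$ on $V$ started at $a$. Decomposing each defining sum by the first step of $(Y_n)$ and shifting the index of summation gives the identifications
\[
\theta(z)/z \;=\; \sum_{n \ge 1} \Prob_a[\tau = n,\, Y_\tau = a]/z^n \quad\text{and}\quad \psi(z)/z \;=\; \sum_{n \ge 1} \Prob_a[\tau = n,\, Y_\tau = b]/z^n.
\]
These are precisely the generating functions, weighted by $1/z^n$ in the excursion length $n$, of ``return'' and ``traversal'' excursions respectively for the walk on $X[V]$ starting at an $X$-vertex.

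Next, I would transfer this to $X[V]$. The crucial local observation is that at any $X$-vertex $x'$, the walker enters the $V$-copy attached to a given incident edge $e = [x',y']$ with probability $p(x',y')$, and conditionally on that, its trajectory inside the copy until its next visit to an $X$-vertex is exactly the $(Y_n)$-walk started at $a$ and stopped at $\tau$. Applying the strong Markov property at each successive visit to $X$ then yields, for $x, y \in X \subset X[V]$,
\[
\sum_{n \ge 0} p^{(n)}_*(x,y)/z^n \;=\; \bigl[(I - A(z))^{-1}\bigr](x,y), \qquad A(z) \;:=\; \frac{\psi(z)}{z}\,P \;+\; \frac{\theta(z)}{z}\,I,
\]
since each traversal $x' \to y' \sim x'$ contributes $p(x',y')\,\psi(z)/z$ while summing the return-excursion contributions over the choice of edge at $x'$ produces the diagonal term $\theta(z)/z$ (using $\sum_{y'} p(x',y') = 1$). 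This identity is initially valid for $|z|$ large enough that all the power series converge, and it extends by analytic continuation.

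Finally, Lemma \ref{lem:Fab} gives $\varphi(z) = (z - \theta(z))/\psi(z)$, so a direct factorisation produces
\[
I - A(z) \;=\; \frac{\psi(z)}{z}\,\bigl[\varphi(z)\, I - P\bigr],
\]
whence $(I - A(z))^{-1} = (z/\psi(z))\,\Gc\bigl(\varphi(z)\bigr)$ off the exceptional poles. Combined with $G_*(x,y|z) = (1/z)\bigl[(I - A(z))^{-1}\bigr](x,y)$, this yields the claim. The main delicate point is the careful bookkeeping of the $1/z$ shifts between $\theta, \psi$ and the true excursion generating functions (the paper's convention places a ``hidden'' factor of $1/z$ in both); once those are pinned down, the rest is a short algebraic manipulation driven by Lemma \ref{lem:Fab}.
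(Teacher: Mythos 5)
Your proof is correct, and it is essentially the argument the paper delegates to \cite{WMarkov}, (9.90): a first-passage/excursion decomposition of the walk on $X[V]$ at its successive visits to $X$, with $\theta(z)/z$ and $\psi(z)/z$ as the return and traversal excursion generating functions, followed by the algebraic identity $I-A(z)=\tfrac{\psi(z)}{z}\bigl(\varphi(z)I-P\bigr)$ from Lemma \ref{lem:Fab} and analytic continuation. You have in effect written out in full the adaptation that the paper only asserts to be ``immediate''; the one point worth making explicit is that the excursion law from the $b$-end of a substituted copy coincides with that from the $a$-end because of the automorphism $\gamma$.
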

The proof is exactly as in \cite[(9.90)]{WMarkov}. There, only the special case is
considered where $V$ is a finite path of length $N$, but the adaptation is immediate. 
One also has to take into account the replacement of $z$ with $1/z$. As mentioned in 
footnote \ref{fn:fn2}, 
via this proof the formula first holds for $|z| > 1$, but it then extends to $\widehat \C$
by analytic continuation of the involved rational functions. 
The formula also holds when $X$ is infinite, with some care regarding the spectrum,
which then does no more consist of a finite collection of poles of the Green functions.

\begin{lem}\label{lem:bvalue}
Let $\alpha, \beta, z \in \C$ and consider the boundary value 
problem
$$
Qf = z\, f \; \text{ on }\; V^o\,, \quad f(a) = \alpha\,,\; f(b) = \beta.
$$
If $z \notin \spec(Q_{V^o})$ then 
it has the unique solution
$$
\begin{gathered}
f(u) = \alpha \, F_{V\!-b}(u,a|z) + \beta\, F_{V\!-a}(u,b|z)\,, \quad \text{where for }\; 
u \in V^o\\
F_{V\!-b}(u,a|z) =  \sum_{v \in V\!\!-\!a} G_{V^o}(u,v|z) q(v,a) \AND
F_{V\!-a}(u,b|z) = \sum_{v \in V\!\!-\!b} G_{V^o}(u,v|z) q(v,b). 
\end{gathered}
$$
\end{lem}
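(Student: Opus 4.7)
The plan is to convert the boundary value problem into an inhomogeneous linear system on $V^o$ and invert it, using the hypothesis $z \notin \spec(Q_{V^o})$.

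First I would fix any $f$ with $f(a)=\alpha$ and $f(b)=\beta$ and, for $u \in V^o$, expand $Qf(u) = zf(u)$ by splitting the neighbors of $u$ according to whether they lie in $V^o$ or in $\{a,b\}$:
$$
\sum_{v \in V^o} q(u,v)\,f(v) + q(u,a)\,\alpha + q(u,b)\,\beta = z\,f(u).
$$
Rearranging gives the matrix equation
$$
\bigl((zI_{V^o} - Q_{V^o})\, f|_{V^o}\bigr)(u) = q(u,a)\,\alpha + q(u,b)\,\beta =: g(u),
$$
so the boundary value problem for $f$ on $V^o$ is equivalent to this linear system with right-hand side $g$.

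Since $z \notin \spec(Q_{V^o})$, the operator $zI_{V^o}-Q_{V^o}$ is invertible, and its inverse is precisely the resolvent $\mathcal{G}_{V^o}(z)$ with matrix elements $G_{V^o}(u,v|z)$. Thus existence and uniqueness are immediate, and
$$
f(u) = \sum_{v \in V^o} G_{V^o}(u,v|z)\,g(v)
     = \alpha \sum_{v \in V^o} G_{V^o}(u,v|z)\, q(v,a) + \beta \sum_{v \in V^o} G_{V^o}(u,v|z)\, q(v,b).
$$
Finally I would identify the two sums with $F_{V\!-b}(u,a|z)$ and $F_{V\!-a}(u,b|z)$ respectively: invoking the paper's convention that $G_{V^o}(u,v|z)=0$ for $v \in \{a,b\}$, the sums over $v \in V^o$ agree with the sums over $v \in V \setminus\{a\}$ and $v \in V \setminus\{b\}$ appearing in the statement.

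I do not foresee any real obstacle. The only care needed is the bookkeeping of the index sets for $v$ and the extension convention for $G_{V^o}$; apart from that, the argument is pure linear algebra once the equation is organized as a linear system on $V^o$ with boundary data absorbed into the right-hand side.
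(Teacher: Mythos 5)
Your proof is correct and rests on the same key fact as the paper's, namely the invertibility of $zI_{V^o}-Q_{V^o}$ for $z\notin\spec(Q_{V^o})$; the paper verifies the stated formula as a solution via the identities $QF_{V\!-b}(\cdot,a|z)=z\,F_{V\!-b}(\cdot,a|z)$ on $V^o$ and then argues uniqueness from the homogeneous system, whereas you derive the formula directly by inverting the inhomogeneous system. The identification of your two sums with $F_{V\!-b}(u,a|z)$ and $F_{V\!-a}(u,b|z)$ is exactly the content of \eqref{eq:F2}, so no gap there.
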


\begin{proof} With the values $F_{V\!-b}(a,a|z)=1$ and $F_{V\!-b}(b,a|z)=0$,
we have $QF_{V\!-b}(\cdot,a|z) = z\, F_{V\!-b}(\cdot,a|z)$ on $V^o$.
Exchanging the roles of $a$ and $b$, we have the analogous identity;
compare with \eqref{eq:F1} and \eqref{eq:F2}.
This shows that the proposed function $f$ is indeed a solution with the required
boundary values. 

To show uniqueness, we need to show that necessarily $f \equiv 0$ when $\alpha=\beta=0$.
In this case, the restriction of $f$  to $V^o$ 
must satisfy  $(z\,I_{V^o} -Q_{V^o})f_{V^o} = 0$, and the additional hypothesis 
$z \notin \spec(Q_{V^o})$ yields uniqueness.
\end{proof}

%While the last lemma does not need the symmetry between the vertices $a$ and $b$,
%it used next.
 
We now consider the specific case when $X = S = S_N$ is the star 
with central vertex $x$, leaves $y_1\,,\dots, y_N$ and edges 
$e_j = [x,y_j]$, $j=1\,\dots, N$. We set $p_j = p(x,y_j) = a_S(e_j)$. 
The reversed probabilites are $p(y_j\,,x)=1$, but for the moment they will play 
only a secondary role. We consider
$P_*$ on $S[V]$. We may assume without loss of 
generality that along each edge $e_j$ of $S$ ($j \ge 1$), we have $e_j^a=x$ and $e_j^b=y_j\,$.

\begin{pro}\label{pro:star}
%Let $\lambda^* \in \C \setminus \spec(Q_{V^o})$.\\[3pt]
\emph{(a)} Let $\lambda^* \in \C \setminus \spec(Q_{V^o})$. Suppose that $f^*: S[V] \to \C$ is a 
non-zero function such that
\begin{equation}\label{eq:lambda*f*}
P_*f^* = \lambda^* \, f^* \quad \text{on }\; S[V] \setminus \{ y_1\,,\dots,y_N\}.
\end{equation}
Then the restriction $f = f^*|_S$ satisfies
\begin{equation}\label{eq:Upsi}
\bigl(\lambda^*- \theta(\lambda^*)\bigr)f(x) 
= \psi(\lambda^*)\, Pf(x).
\end{equation}
Thus, if $\theta(\lambda^*) \ne \lambda^*$ 
%-- which holds in particular when 
%$\lambda^* \notin \C \setminus \spec(Q_{V\!-b})$ -- 
then 
$$
f(x) = F_{V}(a,b|\lambda^*) \, Pf(x).
$$ 
Analogously, if 
$\psi(\lambda^*) \ne 0$ then
$$
Pf(x) = \varphi(\lambda^*)\,f(x).
$$
\emph{(b)} Conversely, suppose that $f: S \to \C$ is a non-zero function such that 
$Pf(x) = \lambda \, f(x)$, and that there is $\lambda^* \in \C \setminus \spec(Q_{V^o})$
such that $\varphi(\lambda^*) = \lambda$. Then $f$ extends to a unique function $f^*$
on $S[V]$ such that \eqref{eq:lambda*f*} holds.
\\[4pt]
\emph{(c)} If $\lambda^*\in  %[-1\,,\,1]
\C \setminus \spec(Q_{V^o})$ satisfies
$$
\psi(\lambda^*) = 0 \AND \theta(\lambda^*) = \lambda^* 
$$
then \emph{every} function $f: S \to \R$ extends uniquely to a function 
$f^*$ such that \eqref{eq:lambda*f*} holds.
\end{pro}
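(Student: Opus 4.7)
\medskip

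\noindent\textbf{Proof plan for Proposition \ref{pro:star}.}

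The plan is to exploit the fact that on each copy of $V$ (indexed by edge $e_j$), the eigenfunction equation restricted to interior vertices $(e_j,u)$ with $u\in V^o$ becomes exactly a boundary value problem of the kind in Lemma \ref{lem:bvalue}. For part (a), I will fix $j$ and define $f_j : V \to \C$ via the identification $\pi$, i.e.\ $f_j(a) = f(x)$, $f_j(b)=f(y_j)$, and $f_j(u) = f^*(e_j,u)$ for $u \in V^o$. The hypothesis $P_*f^* = \lambda^* f^*$ on $(e_j,u)$, $u \in V^o$, is equivalent to $Qf_j = \lambda^* f_j$ on $V^o$, because the $Q$-transitions inside $V^o$ are intrinsic to the copy, and the transitions to $a$ or $b$ read off $f(x)$ or $f(y_j)$. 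Since $\lambda^* \notin \spec(Q_{V^o})$, Lemma \ref{lem:bvalue} yields the unique representation
\[
f_j(u) \;=\; f(x)\,F_{V\!-b}(u,a|\lambda^*) \;+\; f(y_j)\,F_{V\!-a}(u,b|\lambda^*) \quad (u \in V^o).
\]

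Next I compute $P_*f^*(x)$ directly. The neighbours of $x$ in $S[V]$ are the vertices $(e_j,u)$ with $[a,u]\in E_V$; unwinding the conductances $a_X(e_j)a_V(a,u)$ and normalising by $m_{X[V]}(x) = m_X(x)m_V(a)$ gives
\[
P_*f^*(x) \;=\; \sum_j p(x,y_j)\sum_{u \ne a} q(a,u)\,f_j(u).
\]
The inner sum splits into the $u=b$ term, contributing $q(a,b)f(y_j)$, and the $u\in V^o$ terms. Substituting the Lemma \ref{lem:bvalue} formula and recognising the sums from \eqref{eq:psi-theta} (using $q(a,a)=0$ to absorb harmless boundary terms) collapses everything to
\[
\sum_{u\ne a} q(a,u) f_j(u) \;=\; \theta(\lambda^*)\,f(x) + \psi(\lambda^*)\,f(y_j).
\]
Since $\sum_j p(x,y_j)=1$ and $Pf(x)=\sum_j p(x,y_j)f(y_j)$, equating $P_*f^*(x)$ with $\lambda^* f(x)$ yields \eqref{eq:Upsi}. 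The two displayed consequences then follow by dividing by $\lambda^* - \theta(\lambda^*)$ or $\psi(\lambda^*)$ and invoking the identity $\varphi = (z-\theta)/\psi$ from Lemma \ref{lem:Fab}.

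For part (b), I reverse the procedure: given $f$ with $Pf(x) = \lambda f(x)$ and $\varphi(\lambda^*)=\lambda$, I \emph{define} $f^*$ on each copy by the Lemma \ref{lem:bvalue} boundary value formula with boundary data $f(x)$, $f(y_j)$. By construction $Qf_j = \lambda^*f_j$ on $V^o$, so $P_*f^* = \lambda^* f^*$ at every $(e_j,u)$ with $u\in V^o$. To check the equation at $x$, I reuse the same computation as in (a) to get $P_*f^*(x) = \theta(\lambda^*)f(x) + \psi(\lambda^*)\lambda f(x)$; the relation $\lambda\psi(\lambda^*) = \lambda^*-\theta(\lambda^*)$ (which is just $\varphi(\lambda^*)=\lambda$ cleared of denominators) shows this equals $\lambda^* f(x)$. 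Uniqueness is immediate from Lemma \ref{lem:bvalue} applied copy-by-copy. For part (c) the exact same extension works, but now $\psi(\lambda^*)=0$ and $\theta(\lambda^*)=\lambda^*$ make the identity $P_*f^*(x) = \theta(\lambda^*)f(x) + \psi(\lambda^*)Pf(x) = \lambda^* f(x)$ hold regardless of the value of $Pf(x)$, so no constraint on $f$ is imposed.

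The main bookkeeping obstacle is the clean bridging between the internal sum $\sum_{u\ne a} q(a,u) f_j(u)$ and the defined quantities $\theta(\lambda^*)$, $\psi(\lambda^*)$: one has to remember that $q(a,a)=0$ so that the $u=a$ term may be freely adjoined, and that $F_{V\!-b}(b,a|\cdot)=0$, $F_{V\!-a}(b,b|\cdot)=1$ are the right boundary values to identify the relevant coefficients. Once this is set up, the rest is linear algebra.
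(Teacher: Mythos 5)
Your proposal is correct and follows essentially the same route as the paper's proof: restrict $f^*$ to each substituted copy of $V$, apply the uniqueness part of Lemma \ref{lem:bvalue} to get the representation $f_j(u) = f(x)F_{V\!-b}(u,a|\lambda^*) + f(y_j)F_{V\!-a}(u,b|\lambda^*)$, then expand $P_*f^*(x)$ and regroup to identify $\theta(\lambda^*)$ and $\psi(\lambda^*)$ via \eqref{eq:psi-theta}, with (b) and (c) obtained by running the computation in reverse. The bookkeeping points you flag (absence of loops so $q(a,a)=0$, the normalisation $m_{X[V]}(x)=m_X(x)m_V(a)$ making the transition probabilities factor as $p(x,y_j)q(a,u)$) are exactly what the paper uses implicitly in \eqref{eq:reordered}.
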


\begin{proof} (a) For any $j \in \{1,\dots, N\}$, we write  $u_j$ and $v_j$
for the copy of $u, v \in V^o$ in the substitution of $V$ along the edge $[x,y_j]$. The 
restriction of $f^*$ to $V_j$ solves the boundary problem of Lemma \ref{lem:bvalue}
corresponding to $f(a)=f(x)$ and $f(b) = f(y_j)$. By uniqueness of the solution,
\begin{equation}\label{eq:solution}
f^*(u_j) = f(x) \, F_{V\!-b}(u,a|\lambda^*) + f(y_j)\, F_{V\!-a}(u,b|\lambda^*).
\end{equation}
Therefore, 
$$
\lambda^* f(x) = \sum_{j=1}^N p_j 
\sum_{u \in V} q(a,u)\Bigl(F_{V\!-b}(u,a|\lambda^*)f(x) + F_{V\!-a}(u,b|\lambda^*)f(y_j)
\Bigr)\,.
$$
Reordering the terms and using the last formula of Lemma \ref{lem:bvalue}, we get
\begin{equation}\label{eq:reordered}
\begin{aligned}
&\biggl(\lambda^* - \sum_{j=1}^N p_j 
\overbrace{\sum_{u, v \in V^o} q(a,u) G_{V^o}(u,v|\lambda^*) q(v,a)}^{\displaystyle = \theta(\lambda^*)}
\biggr) f(x) \\ 
&\qquad\qquad\qquad\qquad= \sum_{j=1}^N p_j 
\underbrace{\biggl(\sum_{u,v \in V^o} q(a,u) G_{V^o}(u,v|\lambda^*) q(v,b) + q(a,b)\biggr)}_{\displaystyle 
= \psi(\lambda^*)} f(y_j)\,. 
\end{aligned}
\end{equation}
(b) Given $f$, in order to have \eqref{eq:lambda*f*} we must define $f^*$ by 
\eqref{eq:solution}. Then $P_*f^* = \lambda^* \, f^*$ on 
$S[V] \setminus \{x,  y_1\,,\dots,y_N\}$ by Lemma \ref{lem:bvalue}.
From $\varphi(\lambda^*) = \lambda$ 
we get $\lambda^* - \theta(\lambda^*) = \lambda \psi(\lambda^*)$, whence (using the same formulas
as above)
$$
\begin{aligned} 
P_* f^*(x) &= \sum_{j=1}^N p_j 
\sum_{u \in V} q(a,u)\Bigl(F_{V\!-b}(u,a|\lambda^*)f(x) + F_{V\!-a}(u,b|\lambda^*)f(y_j)
\Bigr)\\
&= \theta(\lambda^*) f(x) + \psi(\lambda^*) Pf(x) = \lambda^* f^*(x). 
\end{aligned}
$$
(c) follows from the last  two lines.
\end{proof}

\begin{thm}\label{theorem:eigen}
\emph{(i)} Let $\lambda^* \in \spec(P_*)$ with associated non-zero eigenfunction $f^*$ on $X[V]$, 
that is $P_*f^* = \lambda^*\,f^*$.
Suppose that $\lambda^* \notin \spec(Q_{V^o})$. Then the restriction $f = f^*|_X$
satisfies \eqref{eq:Upsi} at every $x \in X \subset X[V]$.
If 
$\psi(\lambda^*) \ne 0$ then
$$
Pf = \lambda\, f\,, \quad \text{where}\quad \lambda = \varphi(\lambda^*)\,,\AND f \not\equiv 0 \; 
\text{ on }\; X.
$$
If $\psi(\lambda^*) = 0$ then also $\theta(\lambda^*) = \lambda^*$, whence 
$\lambda^* \in \spec(Q_{V\!-b})$.
\\[4pt]\emph{(ii)}
Conversely, suppose that $f$ is a non-zero function on $X$ such that $Pf = \lambda f$ and 
that $\lambda^*\in  [-1\,,\,1] \setminus \spec(Q_{V^o})$ is a solution of 
$\varphi(\lambda^*) = \lambda$.
%If $\lambda^* \notin \spec(Q_{V^o})$ %}) \cup \spec(Q_{V-ab})$
%and $G_{V\!-b}(a,a|\lambda^*) \ne 0$
Then $f$ extends to a unique function $f^*$ on $X[V]$ such that $P_*f^* = \lambda^*\, f^*$.

Furthermore, if $\lambda^*\in  [-1\,,\,1] \setminus \spec(Q_{V^o})$ satisfies
$$
\psi(\lambda^*) = 0 \AND \theta(\lambda^*) = \lambda^* 
$$
then  \emph{every} function $f: X \to \R$ extends to a unique function 
$f^*$ such that $P_*f^* = \lambda^*\, f^*$, and $\lambda^* \in \spec(Q) \cap \spec(Q_{V\!-b})$.
\end{thm}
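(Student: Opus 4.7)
The plan is to reduce everything to the star case treated in Proposition~\ref{pro:star} by localising at each vertex of $X$. For every $x\in X$, let $S_x$ denote the star with centre $x$ and leaves equal to the $X$-neighbours of $x$. The key observation is that $S_x[V]\subseteq X[V]$ contains every $X[V]$-neighbour of $x$ and of every interior vertex in a substituted copy of $V$ along an edge incident to $x$; consequently, for any function $g^*$ on $X[V]$, the values of $P_*g^*$ at $x$ and at those interior vertices coincide with the values obtained by applying the star-substitution transition matrix to $g^*|_{S_x[V]}$. Thus every local conclusion of Proposition~\ref{pro:star} transfers verbatim to the general setting.

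For part~(i), fix an eigenfunction $f^*$ of $P_*$ with eigenvalue $\lambda^*\notin\spec(Q_{V^o})$ and apply Proposition~\ref{pro:star}(a) at each $x\in X$ to obtain \eqref{eq:Upsi} pointwise on $X$. When $\psi(\lambda^*)\ne 0$, dividing gives $Pf=\varphi(\lambda^*)f$; I rule out $f\equiv 0$ on $X$ by noting that otherwise, on each substituted copy of $V$, the restriction of $f^*$ would solve the boundary value problem of Lemma~\ref{lem:bvalue} with zero boundary data, and hence vanish by the uniqueness granted by $\lambda^*\notin\spec(Q_{V^o})$, contradicting $f^*\not\equiv 0$. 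When $\psi(\lambda^*)=0$, the same non-vanishing argument forces $\theta(\lambda^*)=\lambda^*$, and then formula~\eqref{eq:GVb} identifies $\lambda^*$ as a pole of $G_{V\!-b}(a,a|z)$, so $\lambda^*\in\spec(Q_{V\!-b})$.

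For part~(ii), given $f$ with $Pf=\lambda f$ and $\lambda^*\in[-1,1]\setminus\spec(Q_{V^o})$ satisfying $\varphi(\lambda^*)=\lambda$, I extend $f$ to $f^*$ on $X[V]$ edge-by-edge via Lemma~\ref{lem:bvalue} with boundary values $\bigl(f(x),f(y)\bigr)$ on the copy of $V$ along $[x,y]$; Lemma~\ref{lem:bvalue} simultaneously delivers uniqueness. By construction $P_*f^*=\lambda^*f^*$ at every interior vertex, and the computation from the proof of Proposition~\ref{pro:star}(b), transported locally to each $x\in X$ via the reduction above, gives $P_*f^*(x)=\theta(\lambda^*)f(x)+\psi(\lambda^*)\,Pf(x)=\bigl(\theta(\lambda^*)+\lambda\psi(\lambda^*)\bigr)f(x)=\lambda^*f(x)$, using $\lambda\psi(\lambda^*)=\lambda^*-\theta(\lambda^*)$ from Lemma~\ref{lem:Fab}.

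For the final assertion, assume $\psi(\lambda^*)=0$ and $\theta(\lambda^*)=\lambda^*$. The previous computation then reduces to $P_*f^*(x)=\theta(\lambda^*)f(x)=\lambda^*f(x)$ regardless of the value of $Pf(x)$, so the edge-by-edge extension produces an eigenfunction of $P_*$ for \emph{every} $f:X\to\R$. The inclusion $\lambda^*\in\spec(Q_{V\!-b})$ was already obtained; to produce $\lambda^*\in\spec(Q)$ I construct explicitly the function $g$ on $V$ with $g(a)=1$, $g(b)=-1$, and $g(u)=F_{V\!-b}(u,a|\lambda^*)-F_{V\!-a}(u,b|\lambda^*)$ for $u\in V^o$. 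Lemma~\ref{lem:bvalue} immediately gives $Qg=\lambda^*g$ on $V^o$; at $a$, a short computation using the second expressions in \eqref{eq:psi-theta} yields $Qg(a)=\theta(\lambda^*)-\psi(\lambda^*)=\lambda^*=\lambda^*g(a)$, and the $\gamma$-symmetry (which forces $g$ to be antisymmetric under $\gamma$) delivers the analogous identity at $b$. The main subtlety is precisely this last verification: the hypotheses $\psi(\lambda^*)=0$ and $\theta(\lambda^*)=\lambda^*$ enter exactly to match the boundary terms at $a$ and $b$, and without them the candidate $g$ would fail to be a global eigenfunction of $Q$.
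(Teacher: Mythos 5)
Your argument is correct and, for almost all of the statement, follows the same route as the paper: localise to the neighbourhood star of each vertex, invoke Proposition \ref{pro:star}, rule out $f\equiv 0$ on $X$ via the uniqueness part of Lemma \ref{lem:bvalue}, and read off $\theta(\lambda^*)=\lambda^*$ and $\lambda^*\in\spec(Q_{V\!-b})$ from \eqref{eq:Upsi} and \eqref{eq:GVb}. The one place where you genuinely diverge is the final claim that $\psi(\lambda^*)=\lambda^*-\theta(\lambda^*)=0$ forces $\lambda^*\in\spec(Q)$. The paper gets this with a structural trick: take the host graph $X=\{x,y\}$ with a single edge, so that $X[V]=V$ and $P_*=Q$, and then the already-proved extension statement immediately puts $\lambda^*$ into $\spec(P_*)=\spec(Q)$. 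You instead exhibit an explicit eigenfunction $g$ of $Q$ with $g(a)=1$, $g(b)=-1$, $g=F_{V\!-b}(\cdot,a|\lambda^*)-F_{V\!-a}(\cdot,b|\lambda^*)$ on $V^o$, and verify $Qg(a)=\theta(\lambda^*)-\psi(\lambda^*)=\lambda^*$ together with the $\gamma$-antisymmetry $g^\gamma=-g$ (which holds for any order of $\gamma$, since $\gamma$ swaps the roles of $a$ and $b$ in the two hitting functions). Both arguments are valid; the paper's is shorter and reuses what was just proved, while yours is more informative -- it produces the eigenfunction concretely and makes visible the antisymmetric boundary behaviour that reappears in the type-(IV) classification of Lemma \ref{lem:compare}(d).
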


\begin{proof}
(i) Recall that the factors on both sides of \eqref{eq:Upsi} are finite when
$\lambda^* \notin \spec(Q_{V^o})$, see \eqref{eq:F2} and \eqref{eq:GVb}.
We can apply Proposition \ref{pro:star} to the neighbourhood star $S(x)$ at every
vertex of $X \subset X[V]$. 
Suppose that $f \equiv 0$ on $X$. Then on the substituted version of $V$ along each edge of $X$,
the function $f^*$ satisfies the boundary problem of Lemma \ref{lem:bvalue} with $z = \lambda^*$
and boundary values $0$. Therefore $f^* \equiv 0$, a contradiction.
Finally, if $\psi(\lambda^*)=0$ and $\theta(\lambda^*) \ne \lambda^*$ then $f \equiv 0$. We just saw that
this contradicts the hypothesis that $\lambda^* \notin \spec(Q_{V^o})$.
%If $\theta(\lambda^*) \ne \lambda^*$, then 
%$F_{V}(a,b|\lambda^*)$ is finite. 
%If $F_{V}(a,b|\lambda^*) = 0$ then by Proposition
%\ref{pro:star} $f^* \equiv 0$ on $X \subset X[V]$. But then Lemma \ref{lem:bvalue}
%implies that $f^* \equiv 0$ on all of $X[V]$, contradicting that $f^*$ is non-zero.
This proves (i).
\\[3pt] 
(ii) follows directly from Proposition \ref{pro:star}, except for the fact that 
$\psi(\lambda^*) = \lambda^* - \theta(\lambda^*) = 0$ implies 
$\lambda^* \in \spec(Q)$. To see this, let $X = \{x,y\}$ be the graph with just one 
edge $[x,y]$ and $p(x,y)=p(y,x)=1$. Then $X[V]=V$ and $P_*=Q$, and by the above,
$\lambda^* \in \spec(P_*)$.   
%Note in particular that for any edge
%$e=[x,y]$ of $X$, the extension $f^*$ on the substituted copy of $V$ is    
\end{proof}

Note that $\varphi$ is a rational function, and it poles are zeroes of $F_{V}(a,b|z)$. 
The set
$$
\varphi\bigl(\spec(Q_{V^o})\bigr) 
= \{ \varphi(\tilde\lambda) : \tilde\lambda \in \spec(Q_{V^o}) \} \subset \widehat\R
$$
is finite. If $\lambda \in \spec(P)$ is not contained in this set, then every solution
$\lambda^*$ of $\varphi(\lambda^*)=\lambda$ yields an eigenfunction and hence is contained in 
$\spec(P_*)$. In particular, all those solutions must be real and contained in $[-1\,,\,1]$.
Now every $\lambda \in [-1\,,\,1]$ can appear as an eigenvalue of some $P$ on some bipartite
graph $X$, see Lemma \ref{lem:lambda}. 

\begin{cor}\label{cor:allreal}
If $\lambda \in [-1\,,\,1]$ then every solution
$\lambda^*$ of $\varphi(\lambda^*) = \lambda$ must be real and contained in $[-1\,,\,1]$.
\end{cor}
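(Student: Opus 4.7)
The plan is to turn the functional equation $\varphi(\lambda^*) = \lambda$ into an eigenvalue equation for a concrete finite self-adjoint matrix, so that $\lambda^*$ is forced to lie in $\R \cap [-1,1]$. The two ingredients are Lemma \ref{lem:lambda}, which realises every $\lambda \in [-1,1]$ as an eigenvalue of some reversible stochastic $P$ on a finite bipartite graph $X$, and Proposition \ref{pro:star}(b), whose hypothesis allows $\lambda^* \in \C \setminus \spec(Q_{V^o})$ rather than only real $\lambda^*$. This last point is the conceptual hook: we never assume $\lambda^*$ is real, we deduce it.

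First I would dispose of the case $\lambda^* \in \spec(Q_{V^o})$: since $Q_{V^o}$ is a self-adjoint contraction on the finite-dimensional space $\ell^2(V^o, m_V)$, every such $\lambda^*$ is automatically real and in $[-1,1]$. So assume $\lambda^* \notin \spec(Q_{V^o})$. Using Lemma \ref{lem:lambda}, I would pick a finite bipartite graph $X$ and a reversible stochastic $P$ with $\lambda \in \spec(P)$, and fix a non-zero eigenfunction $f$ with $Pf = \lambda f$. Running the star-by-star extension of Proposition \ref{pro:star}(b), exactly as in the proof of Theorem \ref{theorem:eigen}(ii), produces a function $f^*$ on $X[V]$ with $P_* f^* = \lambda^* f^*$ everywhere: on the interior of each substituted copy of $V$ the equation is built into the boundary-value formula of Lemma \ref{lem:bvalue}, while at each vertex $x \in X$ it follows from applying the proposition at the star centred at $x$. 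Since $f^*|_X = f \not\equiv 0$, the function $f^*$ is not identically zero, so $\lambda^* \in \spec(P_*)$. But $P_*$ is a reversible stochastic matrix on the finite graph $X[V]$, hence self-adjoint on $\ell^2(X[V], m_{X[V]})$ with spectrum in $[-1,1]$, which forces $\lambda^* \in [-1,1]$.

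The main point to verify, rather than any real obstacle, is that the local star-extensions glue to a single globally defined $f^*$ satisfying $P_*f^* = \lambda^*f^*$ at every vertex of $X[V]$. This is automatic: the boundary-value extension of $f$ onto the copy of $V$ along an edge $[x,y]$ depends only on $f(x)$ and $f(y)$, so two stars sharing that edge produce identical values on their common copy of $V$; and the eigenfunction equation at each $y_j$ left out by the star at $x$ is recovered by applying Proposition \ref{pro:star}(b) at the star centred at $y_j$ itself, exactly as in the proof of Theorem \ref{theorem:eigen}(ii).
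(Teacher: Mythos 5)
Your proof is correct and follows essentially the same route as the paper: the paper likewise combines Lemma \ref{lem:lambda} with the extension mechanism of Proposition \ref{pro:star}(b) (as used in Theorem \ref{theorem:eigen}(ii), whose converse direction indeed only needs $\lambda^*\in\C\setminus\spec(Q_{V^o})$) to place every solution $\lambda^*$ inside $\spec(P_*)\subseteq[-1\,,\,1]$. The one point of divergence is the treatment of the exceptional case: the paper first establishes the claim for the cofinitely many $\lambda\notin\varphi\bigl(\spec(Q_{V^o})\bigr)$ -- for which no solution $\lambda^*$ can lie in $\spec(Q_{V^o})$ -- and then extends to the remaining finitely many values of $\lambda$ by continuity of the roots, whereas you split on $\lambda^*$ itself and note that any solution lying in $\spec(Q_{V^o})$ is already real and contained in $[-1\,,\,1]$ because $Q_{V^o}$ is a self-adjoint substochastic matrix. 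Your variant dispenses with the continuity step and is, if anything, slightly more direct; your verification that the star-by-star extensions glue consistently is exactly the argument underlying Theorem \ref{theorem:eigen}(ii).
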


\begin{proof}
A priori, this must hold for all $\lambda \in [-1\,,\,1] \setminus \varphi\bigl(\spec(Q_{V^o})\bigr)$.
Since the exceptional set is finite, the property extends by continuity to 
$\lambda \in [-1\,,\,1] \cap \varphi\bigl(\spec(Q_{V^o})\bigr)$.
\end{proof}

Another consequence of Lemma \ref{lem:lambda} is the following.

\begin{lem}\label{lem:psine0}
Let $\lambda^* \in [-1\,,\,1]$. If 
$\varphi(\lambda^*)= \lambda \in  [-1\,,\,1]$ then $\psi(\lambda^*) \ne 0$. 

Furthermore, if $\lambda^*$ is not a pole of $\psi(z)$, then it is a simple solution of
the equation \hbox{$\varphi(\lambda^*)= \lambda$.}
\end{lem}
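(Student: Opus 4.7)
The plan is to combine the algebraic identity $\varphi(z)\psi(z)=z-\theta(z)$ (the reciprocal form of Lemma~\ref{lem:Fab}) with Theorem~\ref{theorem:eigen}(ii) and Lemma~\ref{lem:lambda}, in the spirit of the proof of Corollary~\ref{cor:allreal}.

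For the first assertion, I suppose toward a contradiction that $\psi(\lambda^*)=0$. Substituting $z=\lambda^*$ in the identity and using $\varphi(\lambda^*)=\lambda\in[-1,1]$ immediately yields $\lambda^*-\theta(\lambda^*)=\lambda\cdot 0=0$, whence $\theta(\lambda^*)=\lambda^*$ and, via Lemma~\ref{lem:Fab}, $\lambda^*\in\spec(Q_{V-b})$; in particular $|\lambda^*|\le\lambda_0(Q_{V-b})<1$ by Proposition~\ref{pro:phiprop}, so $\lambda^*\ne 1$. In the principal subcase $\lambda^*\notin\spec(Q_{V^o})$ the ``furthermore'' clause of Theorem~\ref{theorem:eigen}(ii) activates: for every graph $X$ and every $f\colon X\to\R$, $f$ extends uniquely to a $\lambda^*$-eigenfunction $f^*$ of $P_*$ on $X[V]$ via formula \eqref{eq:solution}. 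I then invoke Lemma~\ref{lem:lambda} to fix a bipartite $X$ with $\lambda\in\spec(P)$ and compare the ``too rich'' $|X|$-dimensional $\lambda^*$-eigenspace produced by the furthermore-extension against the ``expected'' dimension coming from the response-theoretic interpretation of $\lambda^*$ as a root of $\varphi(z)=\lambda$; the two counts disagree once $X$ is chosen with enough slack, producing the contradiction. As in the proof of Corollary~\ref{cor:allreal}, the exceptional set $\varphi\bigl(\spec(Q_{V^o})\bigr)$ is finite, so by continuity it suffices to handle the generic case. The residual subcase $\lambda^*\in\spec(Q_{V^o})$, where $\psi(\lambda^*)=0$ requires an internal zero--pole cancellation, reduces to the principal one after passing to the reduced rational forms of $\psi$ and $\theta$.

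For the second assertion, set $N(z)=z-\theta(z)-\lambda\psi(z)$, so that $\varphi(z)-\lambda=N(z)/\psi(z)$; hence, using $\psi(\lambda^*)\ne 0$ supplied by Part~1, the root $\lambda^*$ is simple iff $N'(\lambda^*)\ne 0$. Differentiating $\varphi\psi=z-\theta$ gives $\theta'=1-\varphi'\psi-\varphi\psi'$, and feeding this into $N'(\lambda^*)=1-\theta'(\lambda^*)-\lambda\psi'(\lambda^*)$ — together with $\varphi(\lambda^*)=\lambda$ — collapses to the clean identity $N'(\lambda^*)=\varphi'(\lambda^*)\,\psi(\lambda^*)$. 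Simplicity therefore reduces to $\varphi'(\lambda^*)\ne 0$ at any non-pole $\lambda^*$ of $\psi$.

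The main obstacle. Both halves ultimately rest on the claim that, away from $\spec(Q_{V^o})$, the rational function $\varphi$ has no critical points inside $\varphi^{-1}([-1,1])$: in Part~1 this is what forces the dimensional comparison to be strict, and in Part~2 it is exactly $\varphi'(\lambda^*)\ne 0$. By Corollary~\ref{cor:allreal}, $\varphi$ restricts to a $d$-to-$1$ real cover with image $[-1,1]$ (where $d=\deg\varphi$), and by Proposition~\ref{pro:phiprop} it is strictly monotone on $[\lambda_0(Q_{V-b}),1]$; the branch locus of this covering should thus be contained in the finite exceptional set $\varphi\bigl(\spec(Q_{V^o})\bigr)$, so that any critical $\lambda^*$ would itself lie in $\spec(Q_{V^o})$, i.e.\ at a pole of $\psi$. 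Making this containment sharp — and in particular ruling out spurious branchings arising from common zeros of $\psi$ and $z-\theta$ — is the delicate step that simultaneously closes both parts of the proof.
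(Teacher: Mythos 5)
Your proposal does not close either half of the statement, and the missing ingredient in both halves is the same one the paper actually uses: Proposition \ref{pro:G} together with the simplicity of the poles of the resolvent of the self-adjoint operator $P_*$ (Lemma \ref{lem:poles}). Writing $G_*(x,x|z)=G\bigl(x,x|\varphi(z)\bigr)\big/\psi(z)$ and choosing, via Lemma \ref{lem:lambda}, an even circle $X$ with $\lambda\in\mathcal{S}_x(P)$, the term $\frac{c}{\varphi(z)-\lambda}\cdot\frac{1}{\psi(z)}$ with $c>0$ appears in $G_*(x,x|z)$; if $\varphi(\lambda^*)=\lambda$ and $\psi(\lambda^*)=0$ (or if $\lambda^*$ were a multiple root of $\varphi(z)=\lambda$ while not a pole of $\psi$), this would produce a pole of order $>1$, which reversibility forbids. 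Your Part~1 replaces this by a ``dimension count'' that is never made precise and, as stated, cannot yield a contradiction: the configuration $\psi(\lambda^*)=0$, $\theta(\lambda^*)=\lambda^*$, $\lambda^*\notin\spec(Q_{V^o})$ is exactly the set $\mathcal{S}_2$ of Definition \ref{def:S1S2}; it genuinely occurs (Example \ref{exa:mple3}), and there the $|X|$-dimensional eigenspace coexists peacefully with everything else --- the extensions of the $\lambda$-eigenfunctions of $P$ given by \eqref{eq:solution} lie inside that same $|X|$-dimensional space, so no ``excess'' dimension appears. What distinguishes the forbidden situation is precisely the extra hypothesis $\varphi(\lambda^*)\in[-1,1]$, and the only place it could enter your argument is the comparison you leave unproven.

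In Part~2 your computation $N'(\lambda^*)=\varphi'(\lambda^*)\,\psi(\lambda^*)$ is correct, but it merely reformulates the assertion as $\varphi'(\lambda^*)\ne 0$, and your justification --- that the branch locus of the covering ``should'' be contained in $\varphi\bigl(\spec(Q_{V^o})\bigr)$ --- is an assertion, not a proof. Corollary \ref{cor:allreal} says that all preimages of points of $[-1,1]$ are real and lie in $[-1,1]$; this is perfectly compatible with real critical points whose critical values lie in $[-1,1]$ (the Chebyshev polynomial $\varphi=\mathcal{T}_L$ of Example \ref{exa:mple1} is exactly such a map; there the critical points happen to be poles of $\psi$, but that is what must be proved, not what may be assumed). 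You flag this yourself as ``the delicate step,'' so the proposal is, by its own account, incomplete. The pole-multiplicity argument via Proposition \ref{pro:G} closes both halves at once and is the intended route.
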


\begin{proof}  There are a graph $X$ (a circle of even length) 
and an associated reversible
transition matrix $P$ such that $\lambda \in \spec(P)$. Therefore $\lambda \in \mathcal{S}_x$
for some $x \in X$. That is, in the expansion \eqref{eq:Gxyz} of $G(x,x|z)$, the term
$\frac{c}{z-\lambda}$ appears with a coefficient $c > 0$. Then by Proposition \ref{pro:G}
the term 
$$
\frac{c}{\varphi(z)-\lambda} \cdot\frac{1}{\psi(z)}
$$
appears in the induced expansion of $G_*(x,x|z)$. If $\varphi(\lambda^*)=\lambda$ and 
$\psi(\lambda^*) = 0$ then $\lambda^*$ would be a pole with multiplicity $> 1$ of $G_*(x,x|z)$,
which is excluded. The same argument shows that $\lambda^*$ is a simple solution when $\lambda^*$ is not 
a zero of $1/\psi(z)$.
\end{proof}

This does not (as it might seem) contradict Theorem \ref{theorem:eigen}. When  
$\psi(\lambda^*) =0$ then after possible cancellations in the quotient 
$\varphi(z) = \bigl(z- \theta(z)\bigr)\big/\psi(z)$ of rational functions, 
$\varphi(\lambda^*) \in \widehat\C \setminus [-1\,,\,1]$. 

\smallskip

Thus, the most important link between the spectra of $P$ and $P_*$ is the equation 
$\lambda = \varphi(\lambda^*)$. The function $\varphi(\lambda^*)$ is rational,
and for given $\lambda$ there will be several solutions for $\lambda^*$,
so that a given eigenfunction $f$ of $P$ can have different extensions corresponding to
different solutions $\lambda^*$. We are going to see 
further parts of the spectrum of $P_*$ besides the solutions of $\varphi(\lambda^*) = \lambda$
with further sources of multiplicities.

\section{Embedding $\spec(Q)$ and $\spec(Q_{V^o})$ into $\spec(P_*)$}\label{sec:into}

There are simple and natural candidates for 
eigenvalues and non-zero eigenfunctions of $P_*$ coming directly from $Q$, based on the next proposition.
Recall the automorphism $\gamma$ which 
exchanges $a$ and $b\,$: for any $f:V \to \C$ we define $f^{\gamma}$ by
$f^{\gamma}(v) = f(\gamma v)$. Let $r$ be the order of the automorphism $\gamma$, an even number. 

\begin{pro}\label{pro:specQ} Let $\lambda \in \spec(Q)$ be an eigenvalue with multiplicity 
$\nu = \nu_Q(\lambda) \ge 1$. 
Then the associated eigenspace $\mathcal{N}(\lambda)$ has a basis $\{ f_1\,,\dots, f_{\nu} \}$
of eigenfunctions with one of the following properties.
\begin{itemize}
 \item[(I)] $f_j(a)=f_j(b)= 0$ for all $j \in \{ 1,\dots, \nu\}$, or\\[-10pt]
 \item[(II)] $f_j(a)=f_j(b)= 0$ for all $j \in \{ 1,\dots, \nu-1\}$, while 
 $f_{\nu}(a)=f_{\nu}(b) = 1$ and $f_{\nu}^{\gamma}= f_{\nu}\,$,
 or\\[-10pt]
 \item[(III)] $f_j(a)=f_j(b)= 0$ for all $j \in \{ 1,\dots, \nu-1\}$, while 
 $f_{\nu}(a)=-f_{\nu}(b) = 1$ and $f_{\nu}^{\gamma}= -f_{\nu}\,$,
 or\\[-10pt]
 \item[(IV)] $f_j(a)=f_j(b)= 0$ for all 
 $j \in \{ 1,\dots, \nu-2\}$, while $\; f_{\nu}(a) = f_{\nu-1}(b) = 1$, $f_{\nu}(b) = f_{\nu-1}(a) =0\,$,
 $\;f_{\nu}^{\gamma}= f_{\nu-1}\,$, and $\;f_{\nu-1}^{\gamma}= f_{\nu}\,$.
 %and $\;f_{\nu}^{\gamma} \pm f_{\nu-1}^{\gamma}= \pm(f_{\nu} \pm f_{\nu-1})$.
\end{itemize}
Set $\nu_Q'(\lambda)= \nu$ in case \emph{(I)}, $\nu_Q'(\lambda) =\nu-1$ in cases \emph{(II)} and 
\emph{(III)}, and $\nu_Q'(\lambda) = \nu-2$ in case \emph{(IV)}.
When $\nu_Q'(\lambda) \ge 1$, we also have $\lambda \in \spec(Q_{V^o})$. In case \emph{(IV)}, 
$\lambda \in \spec(Q_{V-b})$.
\end{pro}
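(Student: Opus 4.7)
The plan is to exploit that $\gamma$, being a network automorphism, commutes with $Q$ and hence acts linearly on $\mathcal{N}(\lambda)$. Consider the evaluation map $\text{ev}: \mathcal{N}(\lambda) \to \C^2$, $f \mapsto \bigl(f(a), f(b)\bigr)$. Because $\gamma a = b$ and $\gamma b = a$, we have $\text{ev}(f^\gamma) = \bigl(f(b), f(a)\bigr)$, so $\gamma$ acts on the image as the coordinate swap. The image of $\text{ev}$ is therefore a swap-invariant subspace of $\C^2$, necessarily one of $\{0\}$, $\C(1,1)$, $\C(1,-1)$, or all of $\C^2$; these four possibilities correspond exactly to the cases (I)--(IV). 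Note also that the order $r$ of $\gamma$ is even because the orbit $\{a,b\}$ has length $2$.

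In each case I would start with a basis $f_1,\dots,f_{\nu'}$ of $\ker(\text{ev}) = \{f \in \mathcal{N}(\lambda) : f(a)=f(b)=0\}$, where $\nu' = \nu_Q'(\lambda)$, and adjoin one or two distinguished eigenfunctions produced by averaging over $\langle \gamma \rangle$; since $\gamma$ commutes with $Q$ and $\mathcal{N}(\lambda)$ is a vector space, all averages remain in $\mathcal{N}(\lambda)$. In Case (II), starting from any $f$ with $\text{ev}(f) = (1,1)$, replace it by $\tfrac{1}{r}\sum_{k=0}^{r-1} f^{\gamma^k}$, which is $\gamma$-invariant and still evaluates to $(1,1)$ since $\gamma^k a$ equals $a$ for $k$ even and $b$ for $k$ odd. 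In Case (III), use the alternating average $\tfrac{1}{r}\sum_{k=0}^{r-1} (-1)^k f^{\gamma^k}$; a reindexing argument (using that $r$ is even) shows it transforms into its negative under $\gamma$ and evaluates to $(1,-1)$. In Case (IV), starting from any $f$ with $\text{ev}(f) = (1,0)$, take $f_\nu = \tfrac{2}{r}\sum_{k \text{ even}} f^{\gamma^k}$ (the projection onto the $\gamma^2$-invariant subspace): it still has $\text{ev}(f_\nu) = (1,0)$, and setting $f_{\nu-1} = f_\nu^\gamma$ gives $\text{ev}(f_{\nu-1}) = (0,1)$ together with the crucial relation $f_{\nu-1}^\gamma = f_\nu^{\gamma^2} = f_\nu$.

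A dimension count ($\dim\ker(\text{ev}) + \dim\text{im}(\text{ev}) = \nu$) confirms that the resulting systems form bases of $\mathcal{N}(\lambda)$. For the concluding assertions, observe that any $f \in \mathcal{N}(\lambda)$ with $f(a)=f(b)=0$ restricts to a nonzero eigenfunction of $Q_{V^o}$ at $\lambda$, since the boundary values drop out of the sum defining $Qf(v)$ for $v \in V^o$; so $\nu_Q'(\lambda) \ge 1$ forces $\lambda \in \spec(Q_{V^o})$. In Case (IV), $f_\nu$ restricted to $V \setminus \{b\}$ is a nonzero eigenfunction of $Q_{V-b}$ at $\lambda$ because the vanishing at $b$ eliminates the would-be boundary contribution, while $f_\nu(a)=1$ ensures nontriviality. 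The main obstacle is Case (IV): one must choose the averaging so that the two distinguished basis vectors carry simultaneously the prescribed values at $\{a,b\}$ \emph{and} the interlocking $\gamma$-relations; this is why the projection onto the $\gamma^2$-invariant subspace, rather than the full $\gamma$-average, is the right operator.
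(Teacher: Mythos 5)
Your proposal is correct, and while it rests on the same two pillars as the paper's proof --- averaging over the cyclic group $\langle\gamma\rangle$ (using that its order $r$ is even) and adjusting the remaining basis vectors so that they vanish at $a$ and $b$ --- it organizes the argument along a genuinely different and more structural route. The paper runs a hands-on case analysis on an arbitrary initial basis (is some $\bar f_j(a)\ne\pm\bar f_j(b)$? are all ratios $+1$? all $-1$? both signs present?), whereas you identify the four types at once as the four swap-invariant subspaces $\{0\}$, $\C(1,1)$, $\C(1,-1)$, $\C^2$ that can occur as the image of the evaluation map $f\mapsto\bigl(f(a),f(b)\bigr)$ on $\mathcal{N}(\lambda)$; this explains a priori why no further cases exist. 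The constructions also differ in case (IV): the paper first forms the symmetrized and antisymmetrized combinations $\bar f_\nu\pm\bar f_\nu^\gamma$, averages each over $\langle\gamma\rangle$ (with alternating signs for the antisymmetric one), and then recombines the two averages as $\tfrac12(\cdot\pm\cdot)$ to reach evaluations $(1,0)$ and $(0,1)$; you instead project a single function with evaluation $(1,0)$ onto the $\gamma^2$-invariant subspace and let $\gamma$ produce its partner, which delivers the interlocking relations $f_\nu^\gamma=f_{\nu-1}$ and $f_{\nu-1}^\gamma=f_\nu$ in one stroke. Your dimension count $\dim\ker(\mathrm{ev})+\dim\mathrm{im}(\mathrm{ev})=\nu$ replaces the paper's explicit subtraction step $f_j=\bar f_j-\alpha_j f_{\nu-1}-\beta_j f_\nu$, and your verification of the closing assertions about $\spec(Q_{V^o})$ and $\spec(Q_{V-b})$ coincides with the paper's: restrict an eigenfunction vanishing at both $a$ and $b$ (respectively only at $b$) and note that the boundary terms drop out of $Qf$ on $V^o$ (respectively on $V\setminus\{b\}$). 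What the paper's version buys is that it never needs to invoke the classification of invariant subspaces and stays entirely at the level of explicit basis manipulations; what yours buys is a cleaner justification of the fourfold trichotomy and a shorter case (IV).
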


\begin{proof} We start with an arbitrary base $\{ \bar f_1\,,\dots, \bar f_{\nu} \}$ of 
$\mathcal{N}(\lambda)$.
If it has property (I) then we set $f_j = \bar f_j$ and are done. 

Otherwise there is $j$ such that at least one of 
$\bar f_j(a)$ and $\bar f_j(b)$ is $\ne 0$. 
\\[4pt]
\emph{Case 1.} There is $j$ such that $\bar f_j(a) \ne \pm \bar f_j(b)$.  By permuting the basis,
we can assume that $j=\nu$.
Then the functions $\bar f_{\nu}$ and $\bar f_{\nu}^{\gamma}$ are linearly independent 
(so that $\nu \ge 2$), and we can replace some  $\bar f_j$, $j < \nu$, by $\bar f_{\nu}^{\gamma}$. 
Then we reorder the new basis by exchanging the new $\bar f_j$ ($=\bar f_{\nu}^{\gamma}$) with 
$\bar f_{\nu-1}$.
The new basis (maintaining the notation $\bar f_j$ for the new elements after replacement 
and reordering) is $\{ \bar f_1\,,\dots, \bar f_{\nu-2}\,, \bar f_{\nu}^{\gamma}\,, \bar f_{\nu}\}$.  
Now we replace  $\{ \bar f_{\nu}^{\gamma}\,, \bar f_{\nu} \}$ by
$$
\biggl\{ \tilde f_{\nu-1} 
= \frac{1}{\bar f_{\nu}(a)-\bar f_{\nu}(b)}(\bar f_{\nu} - \bar f_{\nu}^{\gamma})\,,\;  
\tilde f_{\nu} = \frac{1}{\bar f_{\nu}(a)+\bar f_{\nu}(b)}(\bar f_{\nu} + \bar f_{\nu}^{\gamma}) \biggr\}
$$
so that $\tilde f_{\nu-1}(a)=-\tilde f_{\nu-1}(b)= \tilde f_{\nu}(a)=\tilde f_{\nu}(b)=1$. 
At this point we can define the  basis elements
\begin{equation}\label{eq:ftilde2}
\overset{\scriptscriptstyle\approx}{\!f}_{\nu} = \frac{1}{r}\sum_{k=0}^{r-1} \tilde f_{\nu}^{\gamma^k}\,.
\AND 
\overset{\scriptscriptstyle\approx}{\!f}_{\!\nu-1} 
= \frac{1}{r}\sum_{k=0}^{r-1} (-1)^k \tilde f_{\nu-1}^{\gamma^k} 
\end{equation}
Then $f_{\nu}= \frac12 (\overset{\scriptscriptstyle\approx}{\!f}_{\nu} +
\overset{\scriptscriptstyle\approx}{\!f}_{\nu-1})$ and 
$f_{\nu}= \frac12 (\overset{\scriptscriptstyle\approx}{\!f}_{\nu} -
\overset{\scriptscriptstyle\approx}{\!f}_{\nu-1})$
have the stated properties.

The real vectors $\bigl( f_{\nu-1}(a), f_{\nu-1}(b) \bigr)$ and $\bigl( f_{\nu}(a),f_{\nu}(b) \bigr)$
are linearly independent. Therefore, for each $j \in \{ 1, \dots, \nu-2\}$
there are $\alpha_j\,, \beta_j \in \R$ such that 
$$
f_j  = \bar f_j - \alpha_j \, f_{\nu-1} - \beta_j\, f_{\nu}  
$$
satisfies $f_j(a)=f_j(b)=0$. In this way, we have found a new basis of 
$\mathcal{N}(\lambda)$ which has property (IV).
\\[4pt]
If we do not have (I) nor Case 1, so that $f_j(a) = \pm f_j(b)$ for all $j$, 
then we distinguish the following remaining cases.
\\[4pt]
\emph{Case 2.} $\bar f_j(a) = \bar f_j(b)$ for all $j$, and $\bar f_j(a) \ne 0$ for at least
one $j$. Then we exchange $\bar f_j$ with $\bar f_{\nu}$ and subsequently replace the new 
$\bar f_{\nu}$ by $\tilde f_{\nu}= \frac{1}{\bar f_{\nu}(a)}\bar f_{\nu}$. 
We define the final basis element 
$$
f_{\nu} = \frac{1}{r}\sum_{k=0}^{r-1} \tilde f_{\nu}^{\gamma^k}\,.
$$
Finally, for $j=1,\dots, \nu-1$, we replace 
$\bar f_j$ by $f_j = \bar f_j - \bar f_j(a)f_{\nu}$. 
Then $\{ f_1\,,\dots, f_{\nu}\}$ is a new basis of 
$\mathcal{N}(\lambda)$ which has property (II).
\\[4pt]
\emph{Case 3.} $f_j(a) = -f_j(b)$ for all $j$, and $f_j(a) \ne 0$ for at least
one $j$. Then we exchange $\bar f_j$ with $\bar f_{\nu}$ and subsequently replace the new 
$\bar f_{\nu}$ by $\tilde f_{\nu}= \frac{1}{\bar f_{\nu}(a)}\bar f_{\nu}$. 
Here, we define the final basis element 
$$
f_{\nu} = \frac{1}{r}\sum_{k=0}^{r-1} (-1)^k \tilde f_{\nu}^{\gamma^k}\,.
$$
Then we conclude as in Case 2 and get  a new basis of 
$\mathcal{N}(\lambda)$ which has property (III).
\\[4pt]
\emph{Case 4.} There are $j_1$ and $j_2$ such that $\bar f_{j_1}(a) = \bar f_{j_1}(b)\ne 0$
and $\bar f_{j_2}(a) = -\bar f_{j_2}(b)\ne 0$. Then we exchange 
$\bar f_{\nu}$ with $\bar f_{j_1}$ and $\bar f_{\nu-1}$
with $\bar f_{j_2}$. Subsequently, given this new numbering, we normalize: 
$\tilde f_{\nu-1}= \frac{1}{\bar f_{\nu-1}(a)}\bar f_{\nu-1}$ 
and $\tilde f_{\nu}= \frac{1}{\bar f_{\nu}(a)}\bar f_{\nu}\,$. 
Then we can proceed as in Case 1
to get a new basis which has property (IV).

\smallskip

Regarding the additional statements, if $\nu' \ge 1$ then the restriction of each of 
the first $\nu'$ eigenfunctions to $V^o$ is an eigenfunction of $Q_{V^o}$ with eigenvalue $\lambda$.

In case (IV), the restriction of $f_{\nu}$ to $V \setminus \{b\}$ is an eigenfunction of $Q_{V-b}\,,$
and the restriction of $f_{\nu-1}$ to $V \setminus \{a\}$ is an eigenfunction of $Q_{V-a}\,$.
\end{proof}

We now explain how in each of the four types of Proposition \ref{pro:specQ}, 
one can use the respective basis of $\mathcal{N}(\lambda)$ to construct linearly independent
eigenfunctions of $P_*$ with the same eigenvalue. In what follows, we always 
assume that $\lambda \in \spec(Q)$, $\nu = \nu_Q(\lambda)$ and $\nu' = \nu_Q'(\lambda)$.

\begin{lem}\label{lem:i}
In each of the four cases of Proposition \ref{pro:specQ}, if $\nu' > 0$ then each of the 
functions $f_j\,$, 
$j=1,\dots, \nu'$, can be extended to $|E_X|$ linearly independent eigenfunctions of $P_*$ with the same
eigenvalue.
\end{lem}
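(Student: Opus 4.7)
The plan is to extend each $f_j$ (for $j \le \nu'$) by placing it on one copy of $V$ at a time. For a given edge $e \in E_X$, define $f_j^{(e)} : X[V] \to \C$ by $f_j^{(e)}(e, u) = f_j(u)$ for $u \in V^o$, and $f_j^{(e)} = 0$ on all of $X$ as well as on $\{e'\} \times V^o$ for every $e' \neq e$. Since $f_j(a) = f_j(b) = 0$ holds in each of the four cases (I)--(IV) of Proposition \ref{pro:specQ} for indices $j \le \nu'$, the function $f_j^{(e)}$ is well-defined and its restriction to the copy $\{e\} \times V$ agrees with $f_j$ under the isomorphism $\pi|_{\{e\} \times V}$ from \eqref{eq:ident}.

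I would then check $P_* f_j^{(e)} = \lambda\, f_j^{(e)}$ pointwise. At an interior vertex $(e, u)$ with $u \in V^o$, the common factor $a_X(e)$ appearing in both the edge conductances $a_{X[V]}((e,u), \cdot)$ and in the total mass $m_{X[V]}(e, u) = a_X(e)\, m_V(u)$ cancels, so the local transition law reduces to $Q$ at $u$, and
\[
P_* f_j^{(e)}(e, u) \;=\; \sum_{v \in V} q(u, v)\, f_j(v) \;=\; Qf_j(u) \;=\; \lambda\, f_j(u),
\]
where the possibly present neighbors $e^a, e^b$ (corresponding to $v = a$ or $v = b$) contribute $0$ by the vanishing hypothesis on $f_j$. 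At interior vertices on other edges, both sides vanish trivially. At $x \in X$ the function $f_j^{(e)}$ itself is $0$, and any nonzero contribution to $P_* f_j^{(e)}(x)$ could only come from the $V$-copy at $e$, which requires $x$ to be an endpoint of $e$; assuming $e^a = x$, those contributions aggregate to $\tfrac{a_X(e)\, m_V(a)}{m_{X[V]}(x)}\, Qf_j(a) = 0$, again by $f_j(a) = 0$.

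Linear independence of $\{f_j^{(e)} : e \in E_X\}$ is then immediate: these $|E_X|$ functions have pairwise disjoint supports contained in $\{e\} \times V^o$, and none of them vanishes identically, since if $f_j \equiv 0$ on $V^o$ then together with $f_j(a) = f_j(b) = 0$ it would vanish on all of $V$, contradicting that $f_j$ belongs to a basis of $\mathcal{N}(\lambda)$. The main subtlety, such as it is, lies in the computation at a vertex $x \in X$: the vanishing condition $f_j(a) = f_j(b) = 0$ is precisely what kills the boundary terms and lets one localize the eigenfunction $f_j^{(e)}$ to the single $V$-copy at $e$.
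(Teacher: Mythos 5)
Your construction coincides with the paper's own proof: the paper defines $f_{j,e}^*$ to equal $f_j$ on the copy of $V^o$ substituted along $e$ and zero everywhere else, and asserts that these are linearly independent eigenfunctions; you supply the (correct) pointwise verification and the disjoint-support argument. The only step you leave slightly implicit is that the vanishing of $P_*f_j^{(e)}$ at an endpoint $x=e^a$ rests on $Qf_j(a)=\lambda f_j(a)=0$, i.e.\ on the eigenvalue equation of $Q$ \emph{at} $a$ combined with $f_j(a)=0$ --- which is exactly why the analogous construction for eigenfunctions of $Q_{V^o}$ in Proposition \ref{pro:specQVo} must instead impose the condition $Qf_j(a)=Qf_j(b)=0$.
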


\begin{proof}
Let $e \in E_X$ and define $f_{j,e}^*$ as follows:
$$
f_{j,e}^*(e,v)=f_j(v) \; \text{ for }\; v \in V^o \AND f_{j,e}^* \equiv 0 \;\text{ on }\;
X[V] \setminus \{e\}\times V^o.
$$
The resulting $\nu' \, |E_X|$ functions are linearly independent eigenfunctions with 
eigenvalue~$\lambda$.
\end{proof}

\begin{lem}\label{lem:ii}
In case \emph{(II)} of Proposition \ref{pro:specQ}, the function $f_{\nu}$ can be extended to an
eigenfunction $f_{\nu}^*$ of $P_*$ with the same eigenvalue.
\end{lem}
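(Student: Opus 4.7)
The plan is to guess the extension and then verify directly. Define
$$
f_\nu^*(x) = 1 \; \text{for } x \in X, \AND f_\nu^*(e,v) = f_\nu(v) \; \text{for } (e,v) \in E_X \times V^o.
$$
The two defining properties of $f_\nu$ in case (II) feed directly into this construction: the normalisation $f_\nu(a)=f_\nu(b)=1$ makes the values along each substituted copy of $V$ agree, via the identification $\pi$ of \eqref{eq:ident}, with the constant value $1$ that $f_\nu^*$ carries on $X\subset X[V]$; and the symmetry $f_\nu^\gamma = f_\nu$ ensures that the assignment $f_\nu^*(e,v)=f_\nu(v)$ is independent of the orientation chosen for $e$ when substituting $V$, since interchanging $e^a$ and $e^b$ relabels $(e,v)$ as $(e,\gamma v)$.

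The verification that $P_*f_\nu^* = \lambda f_\nu^*$ splits naturally into two cases. For an interior vertex $(e,v)$ with $v \in V^o$, the restriction of $P_*$ to the substituted copy $\pi(\{e\}\times V)$ is a graph isomorphic copy of $Q$ (since the factor $a_X(e)$ cancels in the normalisation $q(v,u)=a_V(v,u)/m_V(v)$), and the restriction of $f_\nu^*$ to this copy coincides with $f_\nu$ under $\pi$. Hence
$$
P_*f_\nu^*(e,v) \;=\; Qf_\nu(v) \;=\; \lambda\, f_\nu(v) \;=\; \lambda\, f_\nu^*(e,v).
$$

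For a vertex $x \in X$, the neighbourhood of $x$ in $X[V]$ is partitioned according to the edges $e \in E_X$ incident to $x$. Using $m_{X[V]}(x) = m_X(x)m_V(a)$, the contribution of each edge $e$ with $x = e^a$ to $P_*f_\nu^*(x)$ equals
$$
\frac{a_X(e)}{m_X(x)m_V(a)} \sum_{u \in V} a_V(a,u)\, f_\nu^*(\pi(e,u)) \;=\; \frac{a_X(e)}{m_X(x)}\, Qf_\nu(a) \;=\; \frac{a_X(e)}{m_X(x)}\,\lambda,
$$
the middle equality because $f_\nu^*(\pi(e,u)) = f_\nu(u)$ in every case ($u=a,\, u=b,\, u\in V^o$), thanks once more to $f_\nu(a)=f_\nu(b)=1$. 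The analogous computation applies when $x=e^b$, giving the same value $\lambda\cdot a_X(e)/m_X(x)$ because $Qf_\nu(b)=\lambda$. Summing over all edges $e$ incident to $x$ yields $P_*f_\nu^*(x) = \lambda = \lambda f_\nu^*(x)$.

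There is no real obstacle: the content of the lemma is the \emph{existence} of the extension, and that existence is precisely what the conditions of case (II) are tailored to guarantee. The only subtlety worth flagging is the orientation-independence of the construction, which is what forces the use of both $f_\nu(a)=f_\nu(b)$ \emph{and} the $\gamma$-invariance of $f_\nu$.
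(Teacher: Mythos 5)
Your construction is exactly the one the paper uses: constant value $f_\nu(a)=f_\nu(b)=1$ on $X\subset X[V]$ and $f_\nu(v)$ on the interior copies, and your verification of the eigenfunction equation at interior vertices and at vertices of $X$ (using $m_{X[V]}(x)=m_X(x)m_V(a)$ and $Qf_\nu(a)=Qf_\nu(b)=\lambda$) is correct. The paper simply asserts that the extension is an eigenfunction, so you have supplied the same proof with the routine computation written out.
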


\begin{proof}
We define
$$
f_{\nu}^* \equiv f_{\nu}(a) = f_{\nu}(b) \;\text{ on } X \subset X[V]
\AND f_{\nu}^*(e,v) = f_{\nu}(v) \; \text{ for each }\;(e,v) \in E_X \times V^o.
$$
This is an eigenfunction of $P_*$ with eigenvalue $\lambda$. 
\end{proof}

\begin{lem}\label{lem:iii}
If $X$ is bipartite then in case \emph{(III)} of Proposition \ref{pro:specQ}, 
the function $f_{\nu}$ can be extended to an
eigenfunction $f_{\nu}$ of $P_*$ with the same eigenvalue.
\end{lem}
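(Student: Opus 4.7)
The plan is to exploit the bipartite structure of $X$ together with the antisymmetry $f_\nu^\gamma = -f_\nu$ from case (III). Since $X$ is connected and bipartite, fix a proper $2$-coloring $\sigma: X \to \{+1,-1\}$, so that $\sigma(x)\sigma(y) = -1$ whenever $[x,y] \in E_X$. For each edge $e \in E_X$, set $\sigma(e) = \sigma(e^a)$, and extend $f_\nu$ to $f_\nu^*$ on $X[V]$ by
$$
f_\nu^*(x) = \sigma(x) \quad \text{for } x \in X, \AND f_\nu^*(e,v) = \sigma(e)\, f_\nu(v) \quad \text{for } (e,v) \in E_X \times V^o.
$$

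The first check is consistency at the identified vertices: for an edge $e$ with $e^a = x$ and $e^b = y$, the copy-values would give $\sigma(e) f_\nu(a) = \sigma(x) \cdot 1 = \sigma(x) = f_\nu^*(x)$ and $\sigma(e) f_\nu(b) = \sigma(x) \cdot (-1) = -\sigma(x) = \sigma(y) = f_\nu^*(y)$, so the uniform formula $f_\nu^*(e,u) = \sigma(e) f_\nu(u)$ is well-defined for all $u \in V$. Flipping the orientation of $e$ negates $\sigma(e)$ and simultaneously replaces $f_\nu$ by $f_\nu^\gamma = -f_\nu$ on that copy, so the construction is independent of the chosen orientation.

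Next I would verify $P_* f_\nu^* = \lambda\, f_\nu^*$ vertex by vertex. At an interior vertex $(e,v)$ with $v \in V^o$, all neighbours lie in the single $V$-copy indexed by $e$, where $f_\nu^* = \sigma(e) f_\nu$, so the identity reduces to $Q f_\nu(v) = \lambda f_\nu(v)$. At a vertex $x \in X$, writing $v_x \in \{a,b\}$ for the identification of $x$ in the $V$-copy along $e \ni x$, the inner sum computes as
$$
\sum_{u \in V} a_V(v_x,u) f_\nu(u) = m_V(v_x) (Q f_\nu)(v_x) = \lambda\, m_V(a) f_\nu(v_x),
$$
and the sign $\sigma(e) f_\nu(v_x)$ equals $\sigma(x)$ in both cases $v_x = a$ and $v_x = b$ (the factor $-1$ arising from $\sigma(e) = -\sigma(x)$ when $v_x = b$ exactly cancels with $f_\nu(b) = -1$). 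Summing over $e \ni x$ and using $\sum_{e \ni x} a_X(e) = m_X(x)$ yields $P_* f_\nu^*(x) = \lambda\, \sigma(x) = \lambda\, f_\nu^*(x)$, as required.

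The main (and essentially only) obstacle is the consistent gluing across edges: because $f_\nu(a) = -f_\nu(b)$, any admissible extension is forced to satisfy $f_\nu^*(x) = -f_\nu^*(y)$ for every edge $[x,y] \in E_X$, which is possible if and only if $X$ admits a proper $2$-coloring. Bipartiteness is used in exactly this one place but is indispensable; once the sign assignment is chosen, the verification of the eigenvalue equation is a direct computation using $Q f_\nu = \lambda f_\nu$, the relation $m_V(a) = m_V(b)$, and the product structure of the conductances on $X[V]$.
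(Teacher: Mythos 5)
Your proposal is correct and is essentially the paper's construction: the paper simply orients every edge so that $e^a$ lies in one colour class, making your sign $\sigma(e)$ identically $+1$, and then defines $f_\nu^*$ to be $+1$ on $X_1$, $-1$ on $X_2$ and $f_\nu(v)$ on each substituted copy of $V^o$. Your version with the explicit factor $\sigma(e)=\sigma(e^a)$ just makes the orientation-independence (via $f_\nu^\gamma=-f_\nu$) and the verification at the vertices of $X$ more explicit; there is no substantive difference.
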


\begin{proof}
 Let $X = X_1 \cup X_2$ be the bipartition of the vertex set of $X$. In this case, it is reasonable
 to orient the edges such that %in the substitution $X[V]$ along
 $e^a \in X_1$ and $e^b \in X_2$ for any edge $e \in E_X$.
 %,  with 
 %$x \in X_1$ and $y \in X_2$,
%identify $x$ with the substituted copy of $a$ and $y$ with the substituted copy of $b$. 
%We define
$$
\begin{gathered}
f_{\nu}^* \equiv f_{\nu}(a) =1 \;\text{ on } X_1 \subset X[V]\,,\quad 
f_{\nu}^* \equiv f_{\nu}(b) =-1\;\text{ on } X_2 \subset X[V]
\quad \text{and}\\
f_{\nu}^*(e,v) = f_{\nu}(v) \; \text{ for each }\;(e,v) \in E_X \times V^o.
\end{gathered}
$$
Once more, this is an eigenfunction of $P_*$ with eigenvalue $\lambda$.
\end{proof}

The last construction fails when $X$ is not bipartite.
The fourth case in the ``richest'' one. 

\begin{lem}\label{lem:iv} If $\lambda$ is of type \emph{(IV),} then for any 
 $x \in X$, the functions $f_{\nu}$ and $f_{\nu-1}$ can be used to
 construct an eigenfunction $f_x^*$ of $P_*$ with the same eigenvalue,
 such that $f_x^*(x) = 1$ and $f_x^*(e,\cdot) \equiv 0$ for all 
 $e \in E_X$ not incident with $x$.
 
 This provides $|X|$ linearly independent eigenfunctions.
\end{lem}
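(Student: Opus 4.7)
The plan is to define $f_x^*$ copy by copy, using the case-(IV) basis functions $f_\nu$ and $f_{\nu-1}$ to populate the copies of $V$ along the edges incident with $x$, and then verify the eigenvalue equation vertex by vertex. Concretely, I set $f_x^*(x)=1$, $f_x^*(y)=0$ for every $y\in X\setminus\{x\}$, and $f_x^*(e,v)=0$ for every $e\in E_X$ not incident with $x$ and every $v\in V^o$. For an edge $e$ incident with $x$ the orientation dictates which of the two basis functions to use: if $e^a=x$, set $f_x^*(e,v)=f_\nu(v)$ for $v\in V^o$; if $e^b=x$, set $f_x^*(e,v)=f_{\nu-1}(v)$. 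The prescribed boundary values $f_x^*(x)=1$ and $f_x^*(y)=0$ at the other endpoint are exactly consistent with $f_\nu(a)=1, f_\nu(b)=0$, resp.\ $f_{\nu-1}(a)=0, f_{\nu-1}(b)=1$, so that after the identification $\pi$ the restriction of $f_x^*$ to each copy $\{e\}\times V$ is either identically $0$, or equal to $f_\nu$, or equal to $f_{\nu-1}$.

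I would then verify $P_*f_x^*=\lambda f_x^*$ at the three relevant kinds of vertices. At an interior point $(e,v)$ with $v\in V^o$ the restriction of $f_x^*$ to $\{e\}\times V$ is a $Q$-eigenfunction with eigenvalue $\lambda$ (or identically $0$), giving $P_*f_x^*(e,v)=\lambda f_x^*(e,v)$ at once. At $x$, using $p_*(x,(e,u))=p(x,y_e)\,q(a,u)$ (resp.\ $q(b,u)$), the contribution of each incident edge is $p(x,y_e)\,(Qf_\nu)(a)=\lambda\,p(x,y_e)$ (resp.\ $p(x,y_e)\,(Qf_{\nu-1})(b)=\lambda\,p(x,y_e)$), and summing gives $P_*f_x^*(x)=\lambda\sum_{e\ni x}p(x,y_e)=\lambda=\lambda f_x^*(x)$. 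At a neighbour $y$ of $x$ through an edge $e$, only the copy along $e$ contributes a possibly nonzero term, and that contribution equals $p(y,x)\,(Qf_\nu)(b)=\lambda\,p(y,x)\cdot 0=0=\lambda f_x^*(y)$ (or analogously using $(Qf_{\nu-1})(a)=0$); all other copies incident with $y$ carry the zero function. At any $z\in X$ not adjacent to $x$ all neighbours in $X[V]$ have $f_x^*$-value $0$, so both sides of the eigenvalue equation vanish.

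Linear independence of $\{f_x^*:x\in X\}$ is immediate from $f_x^*(x')=\delta_{x,x'}$ on $X$, yielding the claimed $|X|$ eigenfunctions. I do not foresee a serious obstacle: the only point requiring care is the bookkeeping for the orientation of edges at $x$, which is precisely what case (IV) is designed to accommodate, since $f_\nu^\gamma=f_{\nu-1}$ ensures that the construction does not depend on the a priori choice of orientation made in the substitution. In cases (II) and (III) only a single ``global'' extension existed because the boundary datum at $\{a,b\}$ carried a single symmetry type (symmetric or antisymmetric under $\gamma$); case (IV) supplies enough boundary flexibility to localise an eigenfunction at any prescribed vertex $x\in X$.
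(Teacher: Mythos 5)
Your construction coincides with the paper's (which records it with a small slip, writing the condition as $y=e^a$ where $x=e^a$ is meant), and your vertex-by-vertex verification --- which the paper leaves implicit --- is correct, including the case of a possible edge $[a,b]\in E_V$, absorbed into the full sums $Qf_{\nu}(b)=\lambda f_{\nu}(b)=0$ and $Qf_{\nu-1}(a)=0$. The linear-independence argument via $f_x^*(x')=\delta_{x,x'}$ is exactly what is intended.
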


\begin{proof}
If $e \in E_X$ is an edge incident with $x$ then we set for every $v \in V^o$
$$
f_x^*(e,v)= \begin{cases}  f_{\nu}(v)\,,&\text{if }\; y=e^a\\
                           f_{\nu-1}(v)\,,&\text{if }\; y=e^b
            \end{cases}
$$
For all other edges $\tilde e \in E_X$, we  set
$f_x(\tilde e,v) = 0$ for $v \in V^o$. On $X \subset X[V]$, we set $f_x(y) =\delta_x(y)$. 
\end{proof}

In the last Lemma, since we have $|X|$ linearly independent extensions, \emph{every}
function $f: X \to \R$ extends to an eigenfunction of $P_*$ with eigenvalue $\lambda$. 
This should be compared with the last statement of Theorem \ref{theorem:eigen}.

\smallskip

We see that when $X$ is bipartite, we always have $\spec(Q) \subset \spec(P_*)$ with possibly
increased multiplicities of the eigenvalues. When $X$ is not bipartite, and $\lambda \in \spec(Q)$
has property (III), then not necessarily $\lambda \in \spec(P_*)$ unless $\nu_Q'(\lambda) > 0$.

\begin{rmk}\label{rmk:numbers}
For $\lambda \in \spec(Q)$, the following table shows how many linearly independent
eigenfunctions with the same eigenvalue it contributes to $\spec(P_*)$ via the 
above constructions, depending on whether $X$ is bipartite or not.

\emph{Here and in the sequel, we set $\delta_b =1$ when $X$ is bipartite, $\delta_b=0$ otherwise.}
$$
\begin{array}{lll}
\text{Type (I):}&\nu_Q(\lambda)|E_X|& \\[4pt]
\text{Type (II):}&\bigl(\nu_Q(\lambda)-1\bigr)|E_X|+1& \\[4pt]
\text{Type (III):}&\bigl(\nu_Q(\lambda)-1\bigr)|E_X|+\delta_b\\[4pt]                                                
\text{Type (IV):}&\bigl(\nu_Q(\lambda)-2\bigr)|E_X|+|X|\,.                                                                   
\end{array}
$$
\end{rmk}

Now -- and as we shall see, much more importantly -- we also examine whether 
$\spec(Q_{V^o})$ or parts thereof can be embedded into $\spec(P_*)$
via suitable extensions of eigenfunctions.

\begin{pro}\label{pro:specQVo} Let $\lambda \in \spec(Q_{V^o})$ be an eigenvalue with multiplicity 
$\nu_o = \nu_o(\lambda) \ge 1$. For any associated eigenfunction $f$ on $V^o$, 
we consider $Qf(a) = \sum_v q(a,v) f(v)$ and analogously $Qf(b)$.

Then the associated eigenspace $\mathcal{N}_{V^o}(\lambda)$ has a basis $\{ f_1\,,\dots, f_{\nu_o} \}$
of eigenfunctions with one of the following properties.
\begin{itemize}
 \item[(I$^o$)] $Qf_j(a)=Qf_j(b)= 0$ for all $j \in \{ 1,\dots, \nu_o\}$, or\\[-10pt]
 \item[(II$^o$)] $Qf_j(a)=Qf_j(b)= 0$ for all $j \in \{ 1,\dots, \nu_o-1\}$, while 
 $Qf_{\nu_o}(a)=Qf_{\nu_o}(b) = 1$ and $f_{\nu_o}^{\gamma}= f_{\nu_o}\,$,
 or\\[-10pt]
 \item[(III$^o$)] $Qf_j(a)=Qf_j(b)= 0$ for all $j \in \{ 1,\dots, \nu_o-1\}$, while 
 $Qf_{\nu_o}(a)=-Qf_{\nu_o}(b) = 1$ and $f_{\nu_o}^{\gamma}= -f_{\nu_o}\,$,
 or\\[-10pt]
 \item[(IV$^o$)] $Qf_j(a)=Qf_j(b)= 0$ for all 
 $j \in \{ 1,\dots, \nu_o-2\}$, while 
 $\;Qf_{\nu_o}(a) = Qf_{\nu_o-1}(b) = 1$, $Qf_{\nu_o}(b) = Qf_{\nu_o-1}(a) =0\,$,
 $\;f_{\nu_o}^{\gamma}= f_{\nu_o-1}\,$,
 and $f_{\nu_o-1}^{\gamma} = f_{\nu_o}\,$.
 \end{itemize}
Set $\nu_o'(\lambda)= \nu_o$ in case \emph{(I$^o$)}, $\nu_o'(\lambda) =\nu_o-1$ 
in cases \emph{(II$^o$)} and 
\emph{(III$^o$)}, and $\nu_o'(\lambda) = \nu_o-2$ in case~\emph{(IV$^o$)}. 
When $\nu_o'(\lambda) \ge 1$ we also have $\lambda \in \spec(Q_V)$.

%In case \emph{(IV$^o$)}, %$\lambda \in \spec(Q_{V-b})$.
\end{pro}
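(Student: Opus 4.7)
The plan is to mirror the proof of Proposition \ref{pro:specQ} line for line, with the single change that the boundary-value functionals $f \mapsto f(a),\, f(b)$ are replaced by $f \mapsto Qf(a),\, Qf(b)$. The key observation that legitimizes this substitution is that $\gamma$ acts on the new functionals in exactly the same way as on the old ones. Since $\gamma$ is conductance-preserving and $\gamma a = b$, for any $f$ on $V^o$ (extended by $0$ on $\{a,b\}$) one computes
\[
(Qf^\gamma)(a) = \sum_{v} q(a,v) f(\gamma v) = \sum_w q(\gamma a, w) f(w) = Qf(b),
\]
and symmetrically $(Qf^\gamma)(b) = Qf(a)$. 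In particular $Q_{V^o}$ commutes with the pull-back by $\gamma$, so $\mathcal{N}_{V^o}(\lambda)$ is $\gamma$-invariant, and the linear map
\[
\Phi : \mathcal{N}_{V^o}(\lambda) \to \C^2,\quad \Phi(f) = \bigl(Qf(a),Qf(b)\bigr),
\]
intertwines the $\gamma$-action with the coordinate swap on $\C^2$.

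With this in hand, I would run the four-case analysis verbatim from Proposition \ref{pro:specQ}, using $\Phi$ in place of the two-point evaluation. The cases are distinguished by the image $\Phi\bigl(\mathcal{N}_{V^o}(\lambda)\bigr)$: it is $\{0\}$ in (I$^o$); a non-zero subspace of $\C\cdot(1,1)$ in (II$^o$); a non-zero subspace of $\C\cdot(1,-1)$ in (III$^o$); and all of $\C^2$ in (IV$^o$). Starting from an arbitrary basis of $\mathcal{N}_{V^o}(\lambda)$ and permuting so that the relevant boundary behaviour sits at the top, I would apply the $\gamma$-symmetrization $\tfrac1r\sum_k \bar f^{\gamma^k}$ (or its signed variant $\tfrac1r\sum_k(-1)^k \bar f^{\gamma^k}$) to the selected normalised eigenfunction; because $\Phi$ intertwines $\gamma$ with the swap, this averaging preserves the prescribed $\Phi$-value while forcing $f^\gamma = f$ or $f^\gamma = -f$ as required. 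In case (IV$^o$) I would pick $\bar f$ with $\Phi(\bar f)$ not proportional to $(1,\pm 1)$, so that $\bar f$ and $\bar f^\gamma$ are linearly independent, then form the symmetric/antisymmetric pair $\tilde f_{\pm}$ and set $f_{\nu_o}=\tfrac12(\tilde f_+ + \tilde f_-)$, $f_{\nu_o-1}=\tfrac12(\tilde f_+ - \tilde f_-)$, which gives $\Phi(f_{\nu_o})=(1,0)$, $\Phi(f_{\nu_o-1})=(0,1)$ and $f_{\nu_o}^\gamma=f_{\nu_o-1}$. Finally, subtracting appropriate linear combinations of the new $f_{\nu_o}, f_{\nu_o-1}$ (or just of $f_{\nu_o}$ in cases (II$^o$), (III$^o$)) from each remaining basis vector kills its $\Phi$-value, producing the desired kernel basis $f_1,\dots,f_{\nu_o'}$.

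For the final assertion, suppose $\nu_o'(\lambda) \ge 1$ and pick any $f$ among $f_1,\dots,f_{\nu_o'}$, so that $Qf(a)=Qf(b)=0$. Extending $f$ to $V$ by $\tilde f(a) = \tilde f(b) = 0$, the identity $Q\tilde f = \lambda \tilde f$ holds on $V^o$ by hypothesis, and at $a$ it reads $Q\tilde f(a) = Qf(a) = 0 = \lambda \tilde f(a)$, similarly at $b$. Hence $\tilde f$ is a non-zero eigenfunction of $Q$ with eigenvalue $\lambda$, yielding $\lambda \in \spec(Q)$. The only mildly delicate point in the whole argument is checking that the $\gamma$-averaged functions in cases (II$^o$)--(IV$^o$) inherit the specified $\Phi$-values and stay non-zero; this is exactly the intertwining property recorded in the first paragraph. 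Beyond that, the proof is bookkeeping that faithfully copies Proposition \ref{pro:specQ}, and presents no new obstacle.
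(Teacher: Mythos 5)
Your proof is correct and follows essentially the same route as the paper, which simply declares the argument to be ``practically the same'' as that of Proposition 3.1 and records only the observation that linear independence of the vectors $\bigl(Qf_i(a),Qf_i(b)\bigr)$ implies linear independence of the $f_i$ on $V^o$ --- exactly the content of your map $\Phi$. Your explicit verification of the intertwining identity $(Qf^\gamma)(a)=Qf(b)$, which is what legitimises replacing point evaluation by the functionals $Qf(a),Qf(b)$ throughout the case analysis, is a point the paper leaves implicit, so this is a faithful (indeed slightly more complete) rendering of the intended proof.
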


\begin{proof}
 The proof is practically the same as the one of Proposition \ref{pro:specQ}. 
 One point to be observed: if for two eigenfunctions $f_i$ and $f_j$, the 2-dimensional vectors
 $\bigl( Qf_i(a), Qf_i(b) \bigr)$ and $\bigl( Qf_j(a), Qf_j(b) \bigr)$ are linearly independent,
 then also $f_i$ and $f_j$ are linearly independent on $V^o$.
\end{proof}

\begin{rmk}\label{rmk:zero}
When $\nu_o'(\lambda) \ge 1$ in Proposition \ref{pro:specQVo}, resp. $\nu_Q'(\lambda) \ge 1$ 
in Proposition \ref{pro:specQ},   we have 
$\lambda \in \spec(Q_{V}) \cap \spec(Q_{V^o})$ and $\nu_o'(\lambda) = \nu_Q'(\lambda)$.

Indeed, when there is an eigenfunction $f$ of $Q_{V^o}$ with $Qf(a)=Qf(b)=0$
then we can extend it to $V$ with value $0$ at $a$ and $b$. This is then an eigenfunction of $Q$
with the same eigenvalue.

In this case, the extension of $f$ to $|E_X|$ linearly independent eigenfunctions of 
$P_*$ is already covered by Lemma \ref{lem:i}. \hfill $\square$
\end{rmk}

\begin{dfn}\label{def:balance} Let $f^*: X[V] \to \R$. For $e \in E_X$ and 
$v \in V^o$ we let $f^e(v) = f(e,v)$. The \emph{balance} of $f^*$ at $x \in X \subset X[V]$ 
is 
$$
\bal(f^*,x) = \sum_{e \in E_X: e^a = x} a_X(e)\, Qf^e(a) 
+ \sum_{e \in E_X: e^b = x} a_X(e)\, Qf^e(b)\,,
$$
and we say that $f^*$ is \emph{balanced} if $\;\bal(f^*,x) = 0\;$ for every $x \in X$.
\end{dfn}

The motivation for this definition is the following: if $f^*$ is such that 
$Q_{V^o}f^e = \lambda\, f^e$ for each $e \in E_X$ then it extends it to an
eigenfunction of $P_*$ if and only if $f_X \equiv 0$ and $f$ is balanced.

Recall that a \emph{fundamental cycle base} of the graph $X$ is obtained by starting 
with a spanning tree $T$ of $X$. Then  each edge of $X$ outside the tree gives rise to a 
cycle when added to the tree, and the resulting cycles are a fundametal base.

For the following two lemmas concerning the extension of eigenfunctions 
of $Q_{V^o}$ with $Qf(a) = \pm Qf(b) = 1$, we invert the order between (II$^o$) and (III$^o$),
starting with the easier situation.

\begin{lem}\label{lem:odd}
Let $\lambda \in \spec(Q_{V^o})$ and according to 
Proposition \ref{pro:specQVo}, let $f$ be an associated eigenfunction of $Q_{V^o}$
with $Qf(a) = -Qf(b) = 1$, so that $f = f_{\nu_o}$ of case \emph{(III$^o$).} 

%resp. $f = f_{\nu_o-1}$ in case \emph{(IV$^o$).} 

Then for every cycle $C = [x_0\,, x_1\,, \dots, x_k=x_0]$ ($k \ge 3$) 
in the host graph $X$, the function $f$ can be used to construct an eigenfunction $f_C^*$ of $P_*$ 
with the same eigenvalue which is $=0$ on $X[V]$ except for the vertices coming from 
$V^o$ substituted along the edges of the cycle. 

In particular, if $C_1\,,\dots, C_{\ell}$ is a fundamental cycle base of the graph $(X,E_X)$ then this
yields $\ell = |E_X| - |X| +1$ linearly independent eigenfunctions of $P_*$ with
eigenvalue $\lambda$. 
\end{lem}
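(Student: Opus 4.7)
My approach is to place real scalar multiples of $f$ along the edges of the cycle $C$, with scalars chosen precisely so that the antisymmetry $Qf(a) = -Qf(b) = 1$ combined with possibly varying edge conductances cancels out at every cycle vertex. Concretely, I label the edges of $C$ as $e_i = [x_{i-1}, x_i]$ for $i = 1, \dots, k$ (indices mod $k$), introduce signs $\sigma_i \in \{-1, +1\}$ with $\sigma_i = +1$ if $e_i^a = x_{i-1}$ and $\sigma_i = -1$ if $e_i^a = x_i$, and set $\epsilon_i = \sigma_i/a_X(e_i)$. Then I define $f_C^*(x) = 0$ for all $x \in X$, $f_C^*(e_i, v) = \epsilon_i f(v)$ for $v \in V^o$ and $1 \le i \le k$, and $f_C^*(e, v) = 0$ for $v \in V^o$ whenever $e$ is not one of $e_1, \dots, e_k$.

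To check that $P_* f_C^* = \lambda f_C^*$, I invoke the reformulation stated right after Definition \ref{def:balance}: since $Q_{V^o}(f_C^*)^e = \lambda (f_C^*)^e$ holds for every $e \in E_X$ (trivially for edges outside $C$), the task reduces to verifying that $f_C^*$ vanishes on $X$ (automatic) and that $\bal(f_C^*, x) = 0$ for every $x \in X$. At $x \in X$ not lying on $C$ the balance is zero, since no incident edge supports a non-zero $f^e$. At $x = x_j$ on the cycle, only the two incident cycle edges $e_j$ and $e_{j+1}$ contribute, and a short case check on $(\sigma_j, \sigma_{j+1})$, using $Qf(a) = 1$ and $Qf(b) = -1$, yields
$$
\bal(f_C^*, x_j) = -\sigma_j a_X(e_j)\epsilon_j + \sigma_{j+1} a_X(e_{j+1}) \epsilon_{j+1} = -\sigma_j^2 + \sigma_{j+1}^2 = 0.
$$
This is the step I expect to be the main obstacle of the argument: both the antisymmetry $Qf(a) = -Qf(b)$ and the precise rescaling by $a_X(e_i)^{-1}$ are essential, since naive $\pm 1$ coefficients would already fail for non-constant conductances, while the opposite sign condition $Qf(a) = Qf(b)$ would instead produce $\sigma_j + \sigma_{j+1}$, which need not vanish.

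For a fundamental cycle base $C_1, \dots, C_\ell$ associated with a spanning tree $T$ of $X$, each $C_j$ contains a unique non-tree edge $\tilde e_j$ that appears in no other $C_i$. Since $f \not\equiv 0$ on $V^o$ and $\epsilon_j \neq 0$, the function $f_{C_j}^*$ is non-zero on the copy of $V^o$ substituted along $\tilde e_j$, while $f_{C_i}^*$ vanishes on that copy for $i \neq j$. This immediately gives linear independence of the $\ell = |E_X| - |X| + 1$ constructed eigenfunctions of $P_*$ with eigenvalue $\lambda$.
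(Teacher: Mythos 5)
Your proposal is correct and follows essentially the same route as the paper's proof: the same signs $\sigma_i$ (the paper's $\sgn_C(e_i)$), the same rescaling by $a_X(e_i)^{-1}$, the same balance check at the cycle vertices, and the same linear-independence argument via the unique non-tree edge of each fundamental cycle. Your unified formula $\bal(f_C^*,x_j) = -\sigma_j^2 + \sigma_{j+1}^2 = 0$ is a slightly cleaner packaging of the paper's four-case check, but the content is identical.
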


\begin{proof}
Let $e_j= [x_{j}\,,x_{j+1}]$ (with $j+1 \mod k$) for $j=0\,,\dots, k-1$.
According to whether the orientation of $e_j$ is in the direction of the 
cycle or not, we define 
$$
\sgn_C(e_j) = \begin{cases} 1\,,&\text{if }\; e^a= x_j\,,\\
                        -1\,,&\text{if }\; e^a= x_{j+1}\,.
              \end{cases}
$$
%Set 
%\begin{equation}\label{eq:alpha}
%\alpha_0=1 \AND 
%\alpha_j= \prod_{i=1}^j\frac{p(x_i, x_{i-1})}{p(x_i, x_{i+1})} \; \text{ with }\; j+1 \mod k \; 
%\text{ for }\; j= 1, \dots, k-1. 
%\end{equation}
Then we define for $v \in V^o$
$$
f_C^*(e_j,v) = \frac{\sgn_C(e_j)}{a_X(e_j)} f(v)\,,\;j= 0, \dots, k-1,
$$
where (recall) $a_X(e)$ is the conductance of $e \in E_X\,$.
On all other vertices of $X[V]$ we set $f_C^* =0$.
The %identity $P_*f_C^* = \lambda \, f_C^*$ 
balance of $f_C^*$ only needs to be checked at the vertices of the cycle.
%For any $e \in E_C$, let us temporarily write $e^1 = e^a$ and $e
For $j=0, \dots, k-1$, with indices always taken $\mod k$, we have 4 possible cases at $x_j\,$:
$$
\begin{aligned} 
 \bal(f^*,x_j) &= \sgn_C(e_j) \cdot 
 \begin{cases} Qf(a)\,, &\sgn_C(e_j)=1\\Qf(b)\,, &\sgn_C(e_j)=-1 \end{cases}\Biggr\}\\  
               &\quad + \sgn_C(e_{j-1}) \cdot 
 \begin{cases} Qf(b)\,, &\sgn_C(e_{j-1})=1\\Qf(a)\,, &\sgn_C(e_{j-1})=-1 \end{cases}\Biggr\}\\                
               &=  (\pm 1) -  (\pm 1) = 0\,.
\end{aligned}
$$

\smallskip

Now take the fundamental cycle base induced a spaning tree $T$. 
If $g = \sum_i \alpha_i\, f_{C_i}^* \equiv 0$ then it is $0$ along the 
``lonely edge'' of each $C_i$ (the one added to $T$), whence $\alpha_i=0\,$.
\end{proof}

We remark that Lemma \ref{lem:odd} also applies to the function 
$\overset{\scriptscriptstyle\approx}{\!f}_{\nu-1}$ of \eqref{eq:ftilde2} in case 
(IV$^o$), for which we shall use a different approach.

\begin{dfn}\label{def:nonbacktr}
A \emph{nonbacktracking closed path} in $X$ is a sequence  
$C \!=\! [x_0\,, x_1\,,...,%\dots, 
x_k=x_0]$ ($k \ge 3$) such that -- with indices taken modulo $k$ --
$e_j=[x_j\,,x_{j+1}] \in E_X$ and $x_{j+1} \ne x_{j-1}$ for all~$j$. 

The \emph{defect} of an edge $e \in E_X$ with respect to $C$ is 
$$
\df_C(e) = \sum_{j: e_j=e} (-1)^j\,;
$$
if $e \notin E_C$ then $\df_C(e)=0$. 
\end{dfn}

Every cycle is a  nonbacktracking closed path. 
In general, $C$ can cross an edge several times.
If we shift the vertex numbering along $C$ by $1$ modulo $k$, 
then the defect changes sign, but the absolute value remains unchanged. We say that $C$ itself 
has non-zero defect if this holds for at least one of its edges. 

\begin{lem}\label{lem:even}
Let $\lambda \in \spec(Q_{V^o})$ and according to 
Proposition \ref{pro:specQVo}, let $f$ be an associated eigenfunction of $Q_{V^o}$
with $Qf(a) = Qf(b) = 1$, so that $f = f_{\nu_o}$ of case \emph{(II$^o$).} 

Then for every \emph{even} non-backtracking closed path 
$C = [x_0\,, x_1\,, \dots, x_k=x_0]$ ($k \ge 4$ even) with non-zero defect
in the host graph $X$, the function $f$ can be used to construct a non-zero eigenfunction 
$g_C^*$ of $P_*$ 
with the same eigenvalue which is $=0$ on $X[V]$ outside of the vertices corresponding to 
$V^o$ substituted along the edges of the cycle. 

\smallskip

In particular, let $C_1\,,\dots, C_{\ell}$ is a fundamental cycle base. 
If $X$ is bipartite then the construction  
yields the $\ell = |E_X| - |X| +1$ linearly independent eigenfunctions $g_{C_i}^*$ of $P_*$.

\smallskip

If $X$ is not bipartite then the fundamental cycle base can be used to obtain $\ell - 1 = |E_X| - |X|$
linearly independent eigenfunctions of $P_*\,$; see the construction in the below proof.
\end{lem}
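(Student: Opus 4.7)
The plan is to take the ansatz $g_C^* \equiv 0$ on $X$ and
$$g_C^*(e,v) = \df_C(e)\, f(v)\big/a_X(e) \quad \text{for } (e,v) \in E_X \times V^o,$$
then verify $P_*\, g_C^* = \la\, g_C^*$. At each interior vertex $(e,v)$ the equation reduces to $Q_{V^o}f(v) = \la f(v)$ using $g_C^* = 0$ on $X$ (in particular at $e^a, e^b$), exactly as in the proof of Lemma \ref{lem:odd}. At $x \in X$ the equation becomes $\bal(g_C^*, x)=0$; the hypothesis $Qf(a)=Qf(b)=1$ makes each per-edge contribution to the balance equal to $\df_C(e)$, so
$$\bal(g_C^*, x) = \sum_{e \in E_X:\, x \in e} \df_C(e) = \sum_{i:\, x \in e_i} (-1)^i.$$
Each visit of the closed path to $x$ at position $s$ contributes the consecutive pair $(-1)^{s-1}+(-1)^s=0$, with indices mod $k$; the wrap-around case $s=0$ yields $(-1)^{k-1}+(-1)^0$, which vanishes \emph{precisely} because $k$ is even. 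Non-vanishing of $g_C^*$ is then equivalent to the non-zero-defect hypothesis.

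For a bipartite $X$, every fundamental cycle $C_i$ is even and simple, so $\df_{C_i}(e) = \pm 1$ on each $e \in C_i$ and $0$ elsewhere. In particular $\df_{C_i}$ does not vanish at the lonely (non-tree) edge of $C_i$, while that edge is absent from $C_j$ for $j \ne i$. A vanishing linear combination $\sum \alpha_i g_{C_i}^*$, restricted to the $V^o$-copy over this lonely edge, therefore retains only the $g_{C_i}^*$-term and forces $\alpha_i = 0$ (using $f \not\equiv 0$). This delivers the $\ell$ linearly independent eigenfunctions.

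For a non-bipartite $X$, I fix an odd fundamental cycle, WLOG $C_1$. For each $i \in \{2,\ldots,\ell\}$ the plan is to build an even non-backtracking closed walk $W_i$ that contains the lonely edge of $C_i$ exactly once. If $C_i$ is even, take $W_i := C_i$ directly. If $C_i$ is odd, take
$$W_i := C_i \cdot P_i \cdot C_1 \cdot P_i^{-1},$$
where $P_i$ is a path in the spanning tree $T$ from a suitably chosen vertex of $C_i$ to one of $C_1$ (possibly trivial, when $C_i$ and $C_1$ already permit a non-backtracking concatenation). The length $|C_i|+2|P_i|+|C_1|$ is even, and because $P_i$ lies in $T$ it is edge-disjoint from $C_i$ and $C_1$ (which carry the non-tree edges of $C_i$ and $C_1$). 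Choosing the directions of traversal of $C_i$ and $C_1$ at the junction vertices lets one meet simultaneously the four non-backtracking conditions at $C_i \to P_i$, $P_i \to C_1$, $C_1 \to P_i^{-1}$, and the closure $P_i^{-1} \to C_i$. By construction the defect of $W_i$ at the lonely edge of $C_i$ is $\pm 1$ (it occurs only inside the $C_i$-segment), while for $j \ne i$ this edge does not appear in $C_j$, $P_j$, or $C_1$. The lonely-edge argument once more delivers $\ell - 1$ linearly independent eigenfunctions $g_{W_i}^*$.

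The main technical hurdle is the simultaneous fulfilment of the four non-backtracking conditions at the junctions of $W_i$, especially when $C_i$ and $C_1$ share tree edges; the edge-disjointness of $P_i$ from $C_i \cup C_1$, combined with the two directional choices available at each endpoint of $P_i$, is what makes the construction robust. The drop from $\ell$ to $\ell-1$ is intrinsic to the non-bipartite case: the only natural way to obtain an even closed walk from $C_1$ alone is to traverse it twice, and with $|C_1|$ odd the two traversals contribute $(-1)^j$ and $(-1)^{j+|C_1|} = -(-1)^j$ at each edge of $C_1$, so the defect is identically zero and the resulting eigenfunction candidate is trivial.
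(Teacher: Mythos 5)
Your proposal follows essentially the same route as the paper: the same ansatz $g_C^*(e,v) = \df_C(e)\,f(v)/a_X(e)$, the same balance verification at the vertices of $C$ (you are in fact more explicit about why evenness of $k$ is needed at the wrap-around), the same lonely-edge argument for linear independence, and in the non-bipartite case the same device of splicing each remaining odd fundamental cycle to one fixed odd cycle through a path in the spanning tree (you anchor at $C_1$ where the paper anchors at $C_{\ell}$, which is immaterial). One small slip: $P_i \subset T$ does not by itself give edge-disjointness from $C_i$ and $C_1$, since fundamental cycles consist mostly of tree edges; but this is harmless, because what the argument actually needs is only that the lonely (non-tree) edge of $C_i$ is traversed exactly once by $W_i$ and is absent from every $W_j$ with $j \ne i$ --- and that holds regardless.
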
 

\begin{proof} 
We define for any $e \in E_X$ and $v \in V^o$
$$
g_C^*(e,v) = \frac{\df_C(e)}{a_X(e)} f(v)\,,
$$
which is $\equiv 0$ when $e \notin E_C\,$, and $g_C^*(x)=0$ for every $x \in X \subset X[V]$.

Again, one needs to check that $g_C^*$ is balanced  only at the vertices of $C \subset X[V]$.
To understand this, we explain the construction in a different way: on the substituted copy of $V$ 
along each $e_j \in E_C$, we take $\frac{(-1)^j}{a_X(e_j)} f$, such that successive edges yield the 
required balance at their common vertex. If the path $C$ crosses the same edge several times,
then we sum the respective contributions, which leads to the stated values of $g_C^*(e,v)$.

\smallskip

Now let $T$ be a a spanning tree, and let $C_1\,,\dots, C_{\ell}$ be the associated fundamental cycle 
base, where each $e \in E_X \setminus E_C$ leads to one $C_i$ when added to $T$.

When $X$ is bipartite, then each of the cycles $C_i$ is even, $\df_C(e) = \pm 1$ for each of its
edges (no repeated edges), and the eigenfunctions $g_{C_i}^*$ are linearly independent like in the 
previous proof. 

When $X$ is not bipartite, then without loss of generality let $C_1\,,\dots, C_{\ell'}$ be the even ones
in the cycle base, and $C_{\ell'+1}\,,\dots, C_{\ell}$ the odd ones, where $\ell > \ell'$.
We first take the eigenfunctions $g_{C_i}^*$ for $i=1,\dots,\ell'$.
If $\ell' = \ell -1$ then we are done.

Otherwise, for $i=\ell'+1, \dots, \ell-1$, we take the shortest path $\pi = \pi_{i,\ell}$ 
in $T$ which connects some  vertex of $C_i$ with some vertex of $C_{\ell}$. (It may have length $0$.) 
Let $x_{0,i} \in C_i$ and $x_{0,\ell} \in C_{\ell}$ be the endpoints of that path. Then we construct 
a nonbacktracking path  $C_{i,\ell}$ of even length by starting at $x_{0,i}\,$, then going around $C_i$, 
then using $\pi$ to get to $x_{0,\ell}\,,$ then going around $C_{\ell}$ and finally back from 
$x_{0,\ell}$ to $x_{0,i}$ in the reversed direction. This path has non-zero defect since at 
least the defining edges of $C_i$ and $C_{\ell}$ (those who were added to $T$ to get the respective
cycle) are traversed only once. The functions $g_{C_i}^*$ ($i=1,\dots,\ell'$) and $g_{C_{i,\ell}}^*$ 
($i=\ell'+1, \dots, \ell-1$)  are linearly independent because each of them is non-zero und the 
substituted graph along the edge of $C_i$ which does not belong to $E_T\,$.
\end{proof}

%Lemma \ref{lem:odd} also applies to the function 
%$\overset{\scriptscriptstyle\approx}{\!f}_{\nu}$ of \eqref{eq:ftilde2}, 
%but we proceed differently as follows.

\begin{lem}\label{lem:mixed}
Let $\lambda \in \spec(Q_{V^o})$ have type \emph{(IV$^o$).}
%, and let  $g_{\nu_o} = \frac12(f_{\nu_o} + f_{\nu_o-1})$.
Then for every pair of distinct edges $e = [x, y]\,, \bar e = [x, \bar y] \in E_X$ sharing an endpoint,
$f_{\nu_o}$ and $f_{\nu_o-1}$ 
can be used to construct an eigenfunction $g^*= g^*_{e,\bar e}$ of $P_*$  
with the same eigenvalue which is $=0$ on $X$ as well as on $X[V]$ outside of 
the vertices corresponding to $V^o$ substituted along the edges of $e$ and~$\bar e$.

\smallskip

This leads to $\sum_{x \in X} \bigl(\deg(x)-1\bigr) = 2|E_X| - |X|$ linearly independent
eigenfunctions.
\end{lem}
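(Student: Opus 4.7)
The plan is to build $g^*_{e,\bar e}$ by gluing scaled copies of $f_{\nu_o}$ and $f_{\nu_o-1}$ on the two $V^o$-substitutions along $e$ and $\bar e$, tuned so that the balance vanishes at every vertex of $X$. For each edge $e$ incident to $x$, type \emph{(IV$^o$)} supplies a choice $h_e \in \{f_{\nu_o}, f_{\nu_o-1}\}$ whose $Q$-value equals $1$ on the $x$-side of $e$ and $0$ on the opposite side: take $h_e = f_{\nu_o}$ when $e^a = x$ and $h_e = f_{\nu_o-1}$ when $e^b = x$. Pick $h_{\bar e}$ analogously relative to $x$ and set
$$
g^*_{e,\bar e}(\tilde e, v) \;=\; \begin{cases} h_e(v)/a_X(e), & \tilde e = e,\\ -h_{\bar e}(v)/a_X(\bar e), & \tilde e = \bar e,\\ 0, & \text{otherwise,}\end{cases}
$$
for $v \in V^o$, while $g^*_{e,\bar e} \equiv 0$ on $X \subset X[V]$.

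The identity $P_* g^*_{e,\bar e} = \lambda\, g^*_{e,\bar e}$ is automatic at each vertex $(\tilde e, v)$ with $v \in V^o$, because the restriction of $g^*_{e,\bar e}$ to any single $V^o$-copy is a scalar multiple of an element of $\mathcal{N}_{V^o}(\lambda)$. At a vertex $x' \in X$ the eigenequation reduces, since $g^*_{e,\bar e}(x')=0$, to $\bal(g^*_{e,\bar e}, x') = 0$. This balance is trivially $0$ at vertices outside of $\{x, y, \bar y\}$; at $y$ only $e$ contributes, giving $a_X(e)\cdot(1/a_X(e))\cdot Qh_e(\text{$y$-side}) = 0$ by the choice of $h_e$, and analogously at $\bar y$; at the shared vertex $x$ the two contributions are $+1$ and $-1$ and cancel.

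For the multiplicity statement, fix at each $x \in X$ a reference edge $e_x \ni x$ and consider the family $\mathcal{F} = \{g^*_{e_x, \bar e} : x \in X,\ \bar e \ni x,\ \bar e \neq e_x\}$, of cardinality $\sum_{x}(\deg(x)-1) = 2|E_X| - |X|$. To prove linear independence, suppose $\sum c_{x,\bar e}\,g^*_{e_x,\bar e} = 0$, fix a pair $(x, e_i)$ with $e_i \neq e_x$ and $e_i \ni x$, and let $y$ be the other endpoint of $e_i$. Restricted to the $V^o$-copy along $e_i$, each element of $\mathcal{F}$ whose support meets $e_i$ contributes either a multiple of $h_{e_i}^{(x)}$ (when centred at $x$) or of $h_{e_i}^{(y)}$ (when centred at $y$); by construction $\{h_{e_i}^{(x)}, h_{e_i}^{(y)}\} = \{f_{\nu_o}, f_{\nu_o-1}\}$, and these are linearly independent. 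Only $g^*_{e_x, e_i}$ itself contributes a (nonzero) multiple of $h_{e_i}^{(x)}$, so extracting that coefficient forces $c_{x, e_i} = 0$; since $(x, e_i)$ was arbitrary, all coefficients vanish.

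The delicate bookkeeping point I expect to need care is precisely the verification that $h_{e_i}^{(x)}$ and $h_{e_i}^{(y)}$ attached to the two endpoints of the \emph{same} edge really exhaust the two distinct elements of $\{f_{\nu_o}, f_{\nu_o-1}\}$; this is exactly what separates contributions from functions centred at different endpoints and isolates each individual $c_{x,e_i}$, and without it the independence argument would collapse.
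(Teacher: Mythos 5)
Your construction is identical to the paper's: the same choice of $f_{\nu_o}$ versus $f_{\nu_o-1}$ according to whether $x=e^a$ or $x=e^b$, the same $\pm 1/a_X$ scaling, and the same count via a fixed reference edge at each vertex. The balance check and the linear-independence argument (separating contributions from the two endpoints of a shared edge using the independence of $f_{\nu_o}$ and $f_{\nu_o-1}$) are correct; you merely spell out in more detail what the paper dismisses as ``clearly linearly independent.''
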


\begin{proof} %We assume without loss of generality that $e^a = \bar e^a = x$. 
For $v \in V^o$, we set 
$$
\begin{aligned}
g^*(e,v) &= \frac{1}{a_X(e)} \cdot \begin{cases} f_{\nu_o}(v)\,,&\text{if }\; x=e^a\,,\\
                               f_{\nu_o-1}(v)\,,&\text{if }\; x=e^b,
                              \end{cases}
 \quad \text{and}\\ 
g^*(\bar e,v) &= \frac{-1}{a_X(\bar e)}\cdot 
                          \begin{cases} f_{\nu_o}(v)\,,&\text{if }\; x=\bar e^a\,,\\
                               f_{\nu_o-1}(v)\,,&\text{if }\; x=\bar e^b.
                          \end{cases} 
\end{aligned}
$$
At all other vertices of $X[V]$, we set $g^* = 0$. This function has the stated properties.

\smallskip 

If $x \in X$ and $e_1\,,\dots, e_d \in E_X$ ($d = \deg(x)$)  are the edges of $X$ incident with $x$
then the functions $g^*_{e_j,e_d}$ are linearly independent for $j=1,\dots,d-1$, but their span
contains the other functions $g^*_{e_i,e_j}$ ($i, j < d, i \ne j$). The functions resulting in this 
way from different vertices $x \in X$ are clearly linearly independent. This leads to the stated
number. 
\end{proof}

%\begin{rmk}\label{rmk:mixedcircles}
%We can use a construction analogous to the one of Lemma \ref{lem:even} to ...
%\end{rmk}

\begin{rmk}\label{rmk:numbers^o}
For $\lambda \in \spec(Q_{V^o})$, the following table shows how many linearly independent
eigenfunctions with the same eigenvalue it contributes to $\spec(P_*)$ via the 
above constructions, depending on whether $X$ is bipartite or not.
$$
\begin{array}{lll}
\text{Type (I$^o$):}&\nu_o(\lambda)|E_X|& \\[4pt]
\text{Type (II$^o$):}&\nu_o(\lambda)|E_X|-|X|+\delta_b\\[4pt]    
\text{Type (III$^o$):}&\nu_o(\lambda)|E_X|-|X|+1& \\[4pt]
\text{Type (IV$^o$):}&\nu_o(\lambda)|E_X|-|X|\,.                                                                   
\end{array}
$$
\end{rmk}

\section{Completeness of the $\spec(Q_{V^o})$-eigenfunction extensions}\label{sec:complete}

In the last section we have studied how eigenfunctions of $Q_{V^o}$ can be use to construct
eigenfunctions of $P_*$ with the same eigenvalue. The latter are all $\equiv 0$ on 
$X \setminus X[V]$. If an eigenfunction of $P_*$ has the latter property then the 
associated eigenvalue must be in $\spec(Q_{V^o})$. 
In this section we show that all eigenfunctions which are $\equiv 0$ on 
$X \setminus X[V]$ arise from the functions provided by the extension 
lemmas \ref{lem:i}, \ref{lem:odd}, \ref{lem:even} and \ref{lem:mixed}.
Thereby, we get the multiplicities of the respective eigenvalues.

As a motivation, we start with a Lemma based on Proposition \ref{pro:G} combined with 
the considerations on poles and local spectra. 

\begin{lem}\label{lem:poles-spec}
Let $\lambda^* \in [-1\,,\,1]$.
\begin{itemize}
 \item[(a)] $\lambda^* \in \mathcal{S}_x(P_*)$ %(which is part of $\spec(P_*)$) 
 for some $x \in X \subset X[V]$ if and only if
\begin{itemize} 
 \item[(a.1)] either $\varphi(\lambda^*) \in \spec(P)$ and $0 < |\psi(\lambda^*)| < \infty$
 \item[(a.2)] or $\psi(\lambda^*)=0$, in which case $\lambda^* \in \spec(Q_{V^o})$ or 
 $\theta(\lambda^*) = \lambda^*$.\\[-8pt]
\end{itemize}
 \item[(b)] If $\lambda^* \in \spec(P_*)$ does not satisfy \emph{(a)} then   
every $\lambda^*$-eigenfunction of $P_*$ is $\equiv 0$ on $X \subset X[V]$,
in which case $\lambda^* \in \spec(Q_{V^o})$.  
\end{itemize}
\end{lem}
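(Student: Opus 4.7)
The plan is to derive both parts from the factorization
$$
G_*(x,x|z) = G(x,x|\varphi(z))\big/\psi(z)
$$
of Proposition \ref{pro:G} (valid for $x \in X \subset X[V]$), combined with the identification of local spectra as pole sets of Green functions (Lemma \ref{lem:poles} and Definition \ref{def:locspec}), the eigenfunction criterion of Lemma \ref{lem:poles2}, and the identity $\varphi(z)\,\psi(z) = z - \theta(z)$ read off from Lemma \ref{lem:Fab}.

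For (a), I would fix $x \in X$ and analyze when the rational function $z \mapsto G(x,x|\varphi(z))/\psi(z)$ has a pole at $\lambda^*$. If $0 < |\psi(\lambda^*)| < \infty$, the denominator is analytic and nonzero at $\lambda^*$, so a pole can only come from a pole of $G(x,x|w)$ at $w = \varphi(\lambda^*)$; by Lemma \ref{lem:poles} this happens exactly when $\varphi(\lambda^*) \in \mathcal{S}_x(P) \subset \spec(P)$, giving (a.1). In the remaining case $\psi(\lambda^*) = 0$, I would expand $G(x,x|w) = \sum_{n \ge 0} p^{(n)}(x,x)\,w^{-(n+1)}$ and use $\varphi\,\psi = z - \theta$ to obtain the rational identity
$$
G_*(x,x|z) = \sum_{n \ge 0} p^{(n)}(x,x)\, \frac{\psi(z)^n}{\bigl(z - \theta(z)\bigr)^{n+1}},
$$
from which the pole/zero order of $G_*(x,x|z)$ at $\lambda^*$ is read off from the orders of $\psi$ and of $z - \theta(z)$ there. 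The qualifier ``$\lambda^* \in \spec(Q_{V^o})$ or $\theta(\lambda^*) = \lambda^*$'' in (a.2) then follows by inspection of Lemma \ref{lem:Fab}: if $\lambda^* \notin \spec(Q_{V^o})$ and $\theta(\lambda^*) \ne \lambda^*$, then $\psi$ and $\theta$ are both analytic at $\lambda^*$ and $z - \theta(z)$ does not vanish there, so every term with $n \ge 1$ in the sum vanishes at $\lambda^*$ while the $n=0$ term stays finite --- contradicting $\lambda^* \in \mathcal{S}_x(P_*)$.

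For (b), suppose $\lambda^* \in \spec(P_*)$ violates (a), so that $\lambda^* \notin \mathcal{S}_x(P_*)$ for every $x \in X$. By Lemma \ref{lem:poles2}, every $\lambda^*$-eigenfunction $f^*$ of $P_*$ vanishes at each $x \in X$, i.e.\ $f^* \equiv 0$ on $X \subset X[V]$. Pick a nonzero such $f^*$; then it is not identically zero on the substituted copy of $V$ over some edge $e \in E_X$. Its restriction to that copy solves the boundary value problem $Qf^* = \lambda^*\,f^*$ on $V^o$ with $f^*(a) = f^*(b) = 0$ (Lemma \ref{lem:bvalue}), so its restriction to $V^o$ is a nonzero eigenfunction of $Q_{V^o}$ at $\lambda^*$, yielding $\lambda^* \in \spec(Q_{V^o})$.

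The main obstacle is the pole counting when $\psi(\lambda^*) = 0$ in part (a). When $\lambda^* \in \spec(Q_{V^o})$, both $\psi$ and $\theta$ can a priori carry residues inherited from $G_{V^o}$, and tracking the orders of vanishing/pole in the combined summand $\psi(z)^n/(z - \theta(z))^{n+1}$ to certify a genuine pole of $G_*(x,x|z)$ (rather than a removable singularity) requires careful bookkeeping. Sorting out these cancellations is precisely what the dichotomy in (a.2) encodes and what makes it non-trivial.
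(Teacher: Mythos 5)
Your proof is correct and takes essentially the same route as the paper's: part (a) from the factorization $G_*(x,x|z)=G\bigl(x,x|\varphi(z)\bigr)/\psi(z)$ of Proposition \ref{pro:G} together with the identification of $\mathcal{S}_x$ with the poles of $G(x,x|\cdot)$, and part (b) from Lemma \ref{lem:poles2}. The only difference is cosmetic: where the paper cites Lemma \ref{lem:psine0} and defers the conclusion $\lambda^*\in\spec(Q_{V^o})$ in (b) to the discussion following the lemma, you re-derive these points directly via the expansion $\sum_n p^{(n)}(x,x)\,\psi(z)^n/\bigl(z-\theta(z)\bigr)^{n+1}$ and via the boundary value problem of Lemma \ref{lem:bvalue}, which is a legitimate filling-in of the same argument.
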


\begin{proof} 
If $\lambda^* \in \spec(P_*)$ then the first possibility is that it is a pole of 
$G_*(x,x|z)$ for some $x \in X \subset X[V]$. In this case, by Propostion \ref{pro:G} and Lemma
\ref{lem:psine0},  $\varphi(\lambda^*) \in \spec(P)$ or else $\psi(\lambda^*)=0$. This proves (a).
Otherwise, Lemma \ref{lem:poles2} yields (b). 
\end{proof}

\smallskip 

Now assume that $f^*: X[V] \to \R$ is a non-zero function which 
satisfies $P_*f^* = \lambda^* \, f$  and  $f^*|_X \equiv 0$, so that $\lambda^* \in  \spec(Q_{V^o})$.
Recall that for each $e \in E_X$, the function $f^e: V^o \to \R$ is 
defined by  $f^e(v) = f^*(e,v)$. It satisfies $Q_{V^o}f^e = \lambda^*\, f^e$, 
whence can be written uniquely as a linear combination
of the eigenfunctions $f_1\,\dots, f_{\nu_o}$ of Proposition \ref{pro:specQVo} 
with $\nu_o=\nu_o(\lambda^*)$. Substract from this linear combination the part coming from
$f_1\,\dots, f_{\nu_o'}$ with $\nu_o'=\nu_o'(\lambda^*)$. 
The resulting function $f_{\rd}^e$ is 
\begin{enumerate}
 \item[(1)] $\; \equiv 0$ if $\lambda^*$ is of type (I$^o$),
 \item[(2)+(3)] a multiple of $f_{\nu_o}$ if $\lambda^*$ is of
type (II$^o$) or (III$^o$), and 
 \item[(4)] a linear combination of $f_{\nu_o-1}$ and $f_{\nu_o}$ if $\lambda^*$ is of
type (IV$^o$).
\end{enumerate}
Putting together all the $f_{\rd}^e$, we get the function $f_{\rd}^*: X[V] \to \R$ with 
$f_{\rd}^*|_X \equiv 0$ 
and $f_{\rd}^*(e,v)=f_{\rd}^e(v)$ for $v \in V^o$. It again satisfies 
$P_*f_{\rd}^* = \lambda^*\, f_{\rd}^*$.
We call $f_{\rd}^*$ the \emph{reduced part} of $f^*$.

\begin{thm}\label{thm:multiplicities}
 Assume that $f^*: X[V] \to \R$ is a non-zero function which 
satisfies $P_*f^* = \lambda^* \, f$  and  $f^*|_X \equiv 0$, so that $\lambda^* \in  \spec(Q_{V^o})$.
Let $f_{\rd}^*$ be its reduced part. Fix a spanning tree $T$ of $X$.

Then the following holds according to the type of the eigenvalue:\\[4pt]
\emph{(i)} If $\lambda^*$ is of type \emph{(I$^o$)} then $f_{\rd}^* \equiv 0$. 
\\[4pt]
\emph{(ii)} If $\lambda^*$ is of type \emph{(II$^o$)} then $f_{\rd}^*$ 
is a linear combination of the functions constructed in Lemma \ref{lem:even} via the spanning
tree $T$ of $X$, namely
\begin{itemize} 
\item if $X$ is bipartite, of the functions $g_{C_i}^*\,$, $i= 1,\dots,\ell$,
where $C_1\,,\dots, C_{\ell}$ is the fundamental cycle base of $X$ induced by $T$,  
\item if $X$ is not bipartite, of the functions $g_{C_i}^*\,$, $i= 1,\dots,\ell'$ and 
$g_{C_{i,\ell}}^*\,$, $i= \ell'+1,\dots,\ell-1$ constructed in the proof of Lemma \ref{lem:even} with 
respect to $T$.
\end{itemize}
%\\
\emph{(iii)} If $\lambda^*$ is of type \emph{(III$^o$)} then $f_{\rd}^*$ 
is a linear combination of the functions $f_{C_i}^*\,$, $i= 1,\dots,\ell$, constructed in 
Lemma \ref{lem:odd} with respect to  $T$.
\\[4pt]
\emph{(iv)} If $\lambda^*$ is of type \emph{(IV$^o$)} then $f_{\rd}^*$ 
is a linear combination of the functions $g^*_{e,\bar e}\,$, constructed in 
Lemma \ref{lem:odd}, where $e,\bar e \in E_X$ are distinct edges sharing an endvertex
in $X$.
\end{thm}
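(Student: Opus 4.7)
The starting point is to extract the \textbf{balance constraint} implied by the eigenvalue equation. For each $x\in X\subset X[V]$, applying $P_*f^*=\lambda^*f^*$ at $x$ and using $f^*(x)=0$ gives
$$
0 = m_{X[V]}(x)\,P_*f^*(x)
= \sum_{e:\,e^a=x} a_X(e)\,Qf^e(a) + \sum_{e:\,e^b=x} a_X(e)\,Qf^e(b) = \bal(f^*,x),
$$
because $f^*(x)=0$ automatically kills the $a$- or $b$-term in $Qf^e$. Now each discarded basis vector $f_j$ ($j\le\nu_o'$) satisfies $Qf_j(a)=Qf_j(b)=0$, so passing from $f^*$ to $f_{\rd}^*$ does not alter $Qf^e(a)$ or $Qf^e(b)$; hence $f_{\rd}^*$ is balanced as well. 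The strategy in every case is then to write $f_{\rd}^e$ as a scalar combination of the one or two remaining basis eigenfunctions, translate the balance condition into a linear system on those scalars, identify its solution space with a familiar null space, count dimensions, and conclude via the linear independence already established in Lemmas \ref{lem:i}, \ref{lem:odd}, \ref{lem:even}, \ref{lem:mixed}.

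Case (I$^o$) is immediate: $\nu_o'=\nu_o$, so $f_{\rd}^e\equiv 0$ by definition. For case (III$^o$) write $f_{\rd}^e=c_e\, f_{\nu_o}$; since $Qf_{\nu_o}(a)=-Qf_{\nu_o}(b)=1$, setting $w_e=a_X(e)c_e$ turns the balance into $\sum_{e:e^a=x}w_e-\sum_{e:e^b=x}w_e=0$, i.e.\ $w$ lies in the null space of the signed (oriented) vertex--edge incidence matrix of the connected graph $X$, whose dimension is $|E_X|-|X|+1=\ell$. The vectors $w^{(i)}_e=\sgn_{C_i}(e)$ corresponding to the functions $f^*_{C_i}$ from Lemma \ref{lem:odd} are linearly independent (each $C_i$ has a unique defining edge not in $T$), so they form a basis and $f_{\rd}^*=\sum_i \alpha_i f^*_{C_i}$. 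For case (IV$^o$) write $f_{\rd}^e=c_e f_{\nu_o}+d_e f_{\nu_o-1}$; the balance at $x$ reads $\sum_{e:e^a=x}a_X(e)c_e+\sum_{e:e^b=x}a_X(e)d_e=0$. Each variable $a_X(e)c_e$ (respectively $a_X(e)d_e$) appears in exactly one equation (the one at $e^a$, respectively $e^b$), so the constraints have pairwise disjoint supports and full rank $|X|$. The null space therefore has dimension $2|E_X|-|X|=\sum_{x}(\deg(x)-1)$, matching the count from Lemma \ref{lem:mixed}; linear independence of the $g^*_{e,\bar e}$ was proved there, giving a basis.

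Case (II$^o$) is the delicate one. Here $Qf_{\nu_o}(a)=Qf_{\nu_o}(b)=1$, so with $w_e=a_X(e)c_e$ the balance becomes $\sum_{e\ni x}w_e=0$, the null space of the \textbf{signless} incidence matrix $B_0$ of $X$. The standard fact is that the left null space of $B_0$ is spanned by $\mathbbm{1}_{X_1}-\mathbbm{1}_{X_2}$ when $X$ is bipartite and is trivial otherwise, giving
$$
\dim\kr(B_0) \;=\; |E_X|-|X|+\delta_b.
$$
This matches exactly the counts in Lemma \ref{lem:even}: the $\ell$ functions $g^*_{C_i}$ in the bipartite case, or the $\ell'+(\ell-1-\ell')=\ell-1$ functions $g^*_{C_i}$ (for even $C_i$) together with $g^*_{C_{i,\ell}}$ (for $\ell'<i<\ell$) in the non-bipartite case. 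Linear independence in each sub-case was verified in the proof of Lemma \ref{lem:even} by exhibiting, for each constructed function, an edge outside $T$ where it alone is non-zero. Combining equality of dimension with linear independence gives the spanning statement.

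The main obstacle I anticipate is book-keeping in case (II$^o$) for non-bipartite $X$: one must justify both the extra $-\,1$ in the dimension (via the absent $\pm\mathbbm{1}$-type kernel vector) and the fact that the $g^*_{C_{i,\ell}}$ from odd-cycle pairings genuinely lie in $\kr(B_0)$. Both are already handled by Lemma \ref{lem:even}, so the proof reduces to invoking it after the dimension count. The remaining cases are routine linear algebra once the balance observation is in place.
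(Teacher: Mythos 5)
Your proposal is correct, but it takes a genuinely different route from the paper. Both arguments begin the same way: the eigenvalue equation at $x\in X\subset X[V]$, together with $f^*|_X\equiv 0$, is equivalent to $\bal(f^*,x)=0$ (up to the harmless positive factor $m_V(a)$ that you drop), and passing to the reduced part preserves balance because the discarded basis vectors have $Qf_j(a)=Qf_j(b)=0$. From there the paper argues \emph{constructively}: it subtracts suitable multiples of the functions $g_{C_i}^*$ (resp.\ $f_{C_i}^*$, $g^*_{e,\bar e}$) so that the remainder $h^*$ is supported on $T[V]$ (or on $T$ plus the single odd-cycle edge in the non-bipartite subcase of (II$^o$)), and then prunes $T$ from the leaves inward, using the balance at each leaf to kill one edge-coefficient at a time; the non-bipartite case of (II$^o$) ends with the observation that an odd cycle cannot carry a nonzero balanced assignment. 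You instead identify the space of balanced reduced assignments with the kernel of an incidence-type matrix -- the oriented incidence matrix (cycle space, dimension $|E_X|-|X|+1$) for type (III$^o$), the signless incidence matrix (kernel dimension $|E_X|-|X|+\delta_b$, using that its rank is $|X|-\delta_b$ for connected $X$) for type (II$^o$), and a disjoint-support rank-$|X|$ system (kernel dimension $2|E_X|-|X|$) for type (IV$^o$) -- and then match dimensions against the linearly independent families already produced in Lemmas \ref{lem:odd}, \ref{lem:even} and \ref{lem:mixed}. Your dimension count buys a shorter, more conceptual proof that ties the multiplicities to classical algebraic graph theory (cycle space, signless incidence rank); the paper's pruning argument is more self-contained, avoids invoking the rank of the signless incidence matrix, and exhibits the coefficients of the linear combination explicitly. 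The computations you would still need to spell out (that $w_{e_j}=\sgn_{C_i}(e_j)$ recovers the fundamental-cycle basis of the cycle space, and that the type-(IV$^o$) equations have pairwise disjoint supports) all check out.
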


\begin{proof}
(i) is clear.
\\[4pt]
(ii) Let $E_X \setminus E_T = \{e_1\,,\dots, e_{\ell}\}$, so that $C_i$ is the cycle obtained when
$e_i$ is added to $E_T\,$. Note that when $i \ne j$ then $e_j$ is not an edge of $C_i$. 
For each $i$, the restriction of $f_{\rd}^*$ to the copy of 
$V^o$ substituted along $e_i$ (as constructed above) is of the form 
$$
f_{\rd}^{e_i} = \frac{\alpha_i}{a_X(e_i)} \,f_{\nu_o}\,.
$$
If $X$ is bipartite then consider the function
$$
h^* = f_{\rd}^* - \sum_{i=1}^{\ell}  \alpha_i\, g_{C_i}^*
$$
Again, it satisfies $P_*h^* = \lambda^*\, h^*$, it is $\equiv 0$ on $X \subset X[V]$, it is reduced, 
and most importantly, $h^{e_i} \equiv 0$ for each $i$. That is, it lives on 
$T[V]$, and along each $e \in E_T\,$ we have that $h^e = \beta_e \, f_{\nu}$ for some
$\beta_e \in \R$. Since $h^*$ has value $0$ at each leaf of $T$, we must have $\beta_e = 0$ for
all edges incident with a leaf. Successively working inwards from the leaves, we get that $h^*\equiv 0$.

If $X$ is not bipartite then define the function $h^*$ as 
$$
h^* = f_{\rd} - \sum_{i=1}^{\ell'}  \alpha_i\, g_{C_i}^* 
- \sum_{i=\ell'+1}^{\ell-1} \alpha_i\, g_{C_{i,\ell}}^* 
$$
(The second part is empty when $\ell' = \ell-1$). Arguing as above, $h^*$ is $0$ outside of the subgraph 
$X'[V]$, where $X'$ is obtained from $T$ by adding the single edge $e_{\ell}$. Now $X'$
consists of the odd cycle $C_{\ell}$ plus pieces of $T$ attached to its vertices. Again, on the 
substitution of $V$ along each edge  $e \in E_{X'} = E_T \cup \{e_\ell\}$, the function $h^e$
is a multiple of $f_{\nu_o}\,$. By the same pruning procedure as above, it must be $\equiv 0$
when $e$ is an edge of one of the subtrees of $T$ attached 
to $C_{\ell}\,$, so that $h^*$ can be $\ne 0$ only on 
$C_{\ell}[V]$. But this also cannot happen since the cycle is odd.
\\[4pt]
(iii) This is completely analogous to the proof of (ii) in the bipartite case, and left to the reader.
\\[4pt]
(iv) We use once more the tree $T$ with $E_X \setminus E_T = \{e_1\,,\dots, e_{\ell}\}$. Let 
$e_i = [x_i=e_i^a,y_i=e_i^b]$ and let $\bar e_i$ and $\tilde e_i$ edges of $T$, such that $x_i$ 
is an endvertex of $\bar e_i$ and $y_i$ an endvertex of $\tilde e_i$.
In the substitution along the edge $e_i$, we have
$$
f_{\rd}^{e_i} = \frac{\alpha_i}{a_X(e_i)} f_{\nu} +  \frac{\beta_i}{a_X(e_i)}f_{\nu-1} 
$$
for some $\alpha_i\,, \beta_i \in \R$. Using Lemma \ref{lem:mixed}, we set 
$$
h^* = f_{\rd} - \sum_{i=1}^{\ell} 
\Bigl(\alpha_i \,g^*_{e_i,\bar e_i} - \beta_i \,g^*_{e_i,\tilde e_i}\Bigr).  
$$
Once more, it satisfies $P_*h^* = \lambda^*\, h^*$, it is $\equiv 0$ on $X \subset X[V]$, it is reduced, 
and it lives on $T[V]$. Along each $e \in E_T\,$ we have that 
$h^e = \frac{\alpha_e}{a_X(e)} \, f_{\nu} + \frac{\beta_e}{a_X(e)} \, f_{\nu-1}$ for some
$\alpha_e\,, \beta_e \in \R$. 

Let $y$ be a leaf of $T$ and $e=[x,y]$ be the edge terminating in $y$. We choose another edge
$\bar e$ which is incident with $x$.

If $x=e^a$ then must have $Qh^e(b)=0$ whence $\beta_e =0$, and we construct the function 
$h_1^* = h^* - \alpha_e\,g^*_{e,\bar e}\,$. 

If $x=e^b$ then must have $Qh^e(a)=0$ whence $\alpha_e =0$, and we let 
$h_1^* = h^* - \beta_e\,g^*_{e,\bar e}\,$. 

In both cases, $h_1$ satisfies $P_*h_1^* = \lambda^*\, h_1^*$, 
it is reduced, and it lives on the tree $T_1[V]$, where $T_1$ is obtained from $T$ by deleting
$e$ and $y$. We can now continue to work inwards from the leaves, removing an edge at each step.
This shows that $h^*$ and hence also $f_{\rd}^*$ are linear combinations of the $g^*_{e,\bar e}\,$, 
$e \in E_X\,$.
\end{proof}

\begin{dfn}\label{def:S1S2}
 $$
\begin{aligned}
\mathcal{S}_1 &= \bigl\{ \lambda^* \in [-1\,,\,1] \setminus \spec(Q_{V^o}) : 
 \psi(\lambda^*)\ne 0\,,\; \varphi(\lambda^*) \in \spec(P) \bigr\} \quad \text{and}\\
\mathcal{S}_2 &= \bigl\{ \lambda^* \in [-1\,,\,1] \setminus \spec(Q_{V^o}) : 
 \psi(\lambda^*) = 0\,,\; \theta(\lambda^*)=\lambda^*  \bigr\}.
\end{aligned}
$$
\end{dfn}

We briefly explain how the eigenvalues of $Q$ are related with the 
three sets $\mathcal{S}_1\,$, $\mathcal{S}_2$ and $\spec(Q_{V^o})$. Recall Proposition
\ref{pro:specQ}.

\begin{lem}\label{lem:compare} %Let $\lambda^* \in \spec(Q)$.\\[4pt]
\emph{(a)} If $\lambda \in \spec(Q)$ is such that $\nu_Q'(\lambda^*) \ge 1$, in particular, 
if $\lambda^*$ has type \emph{(I),} then  $\lambda^* \in \spec(Q_{V^o})$.
\\[4pt]
\emph{(b)} If $\lambda^* \in \spec(Q)$ has type \emph{(II)} then $\lambda^* \in \spec(Q_{V^o})$
or else $\psi(\lambda^*) \ne 0$ and $\varphi(\lambda^*)=1 \in \spec(P)$, 
whence $\lambda^* \in \mathcal{S}_1\,.$

Conversely, if $\lambda^* \in [-1\,,\,1]$ satisfies $\varphi(\lambda^*)=1$ then 
$\lambda^* \in \spec(Q_{V^o})$, or else $\psi(\lambda^*) \ne 0$ and $\lambda^* \in \spec(Q)$
has type \emph{(II)} with $\nu_Q(\lambda^*) = 1$.
\\[4pt]
\emph{(c)} If $\lambda^* \in \spec(Q)$ has type \emph{(III)} then $\lambda^* \in \spec(Q_{V^o})$,
or else $\psi(\lambda^*) \ne 0$ and $\varphi(\lambda^*)=-1$, which is
in $\spec(P)$ if and only if $X$ is bipartite, 
and in this case $\lambda^* \in \mathcal{S}_1\,.$

Conversely, if $\lambda^* \in [-1\,,\,1]$ satisfies $\varphi(\lambda^*)=-1$ then 
$\lambda^* \in \spec(Q_{V^o})$, or else $\psi(\lambda^*) \ne 0$ and $\lambda^* \in \spec(Q)$
has type \emph{(III)} with $\nu_Q(\lambda^*) = 1$.
\\[4pt]
\emph{(d)} If $\lambda^* \in \spec(Q)$ is of type \emph{(IV)} then
$\lambda^* \in \spec(Q_{V\!-b})$. Either $\lambda^* \in \spec(Q_{V^o})$,
or else $\lambda^* - \theta(\lambda^*) = \psi(\lambda^*) = 0$,
whence $\lambda^* \in \mathcal{S}_2\,$.
\end{lem}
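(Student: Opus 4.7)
The unifying tool is the pair of identities, valid when $\lambda^* \notin \spec(Q_{V^o})$ and $f$ denotes the unique BVP solution of Lemma \ref{lem:bvalue} with $f(a) = \alpha$, $f(b) = \beta$:
\begin{equation*}
Qf(a) = \alpha\,\theta(\lambda^*) + \beta\,\psi(\lambda^*) \AND
Qf(b) = \alpha\,\psi(\lambda^*) + \beta\,\theta(\lambda^*).
\end{equation*}
The first follows from \eqref{eq:psi-theta} by splitting the defining sum of $Qf(a)$ according to whether the summation index lies in $V^o$ or equals $b$ (recall $q(a,a)=0$), and the second by $\gamma$-symmetry. The extension of $f$ to $V$ by these boundary values is an eigenfunction of $Q$ with eigenvalue $\lambda^*$ precisely when the two right-hand sides equal $\lambda^*\alpha$ and $\lambda^*\beta$, respectively.

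Part (a) is immediate from Proposition \ref{pro:specQ}: if $\nu_Q'(\lambda^*) \ge 1$, there is a basis eigenfunction $f_j$ with $f_j(a) = f_j(b) = 0$, and its restriction to $V^o$ is a nonzero element of $\mathcal{N}_{V^o}(\lambda^*)$. For (b)--(d) I would assume $\lambda^* \notin \spec(Q_{V^o})$ and apply the identities to the boundary values of the extra basis function $f_{\nu}$ of the respective type. For type (II), the boundary $(\alpha,\beta) = (1,1)$ yields $\theta(\lambda^*) + \psi(\lambda^*) = \lambda^*$; if $\psi(\lambda^*) \ne 0$ this rewrites as $\varphi(\lambda^*) = 1$, placing $\lambda^*$ in $\mathcal{S}_1$. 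The case $\psi(\lambda^*) = 0$ would also force $\theta(\lambda^*) = \lambda^*$, and the identities would then show that the $(1,0)$- and $(0,1)$-BVP solutions are both eigenfunctions of $Q$, producing a type (IV) configuration and thus contradicting type (II). Type (III) is the direct analogue with $\alpha = -\beta = 1$, giving $\varphi(\lambda^*) = -1$; here $-1 \in \spec(P)$ exactly when the connected graph $X$ is bipartite.

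For type (IV), the values $(\alpha,\beta) = (1,0)$ for $f_{\nu}$ force simultaneously $\theta(\lambda^*) = \lambda^*$ and $\psi(\lambda^*) = 0$, so $\lambda^* \in \mathcal{S}_2$. The membership $\lambda^* \in \spec(Q_{V\!-b})$ holds unconditionally in type (IV), because $f_{\nu}(b) = 0$ implies that the restriction of $f_{\nu}$ to $V \setminus \{b\}$ is a nonzero eigenfunction of $Q_{V\!-b}$ with eigenvalue $\lambda^*$. The converse directions in (b) and (c) proceed by running the argument in reverse: for $\lambda^* \in [-1,1] \setminus \spec(Q_{V^o})$ with $\varphi(\lambda^*) = \pm 1$, Lemma \ref{lem:psine0} supplies $\psi(\lambda^*) \ne 0$, and the BVP solution with boundary $(1,\pm 1)$ is then an eigenfunction of $Q$ of type (II) (resp.\ (III)). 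The multiplicity bound $\nu_Q(\lambda^*) = 1$ follows from the uniqueness of BVP solutions: an independent eigenfunction would contribute boundary data not proportional to $(1,\pm 1)$, and combining this second datum with $(1,\pm 1)$ via the identities would again force $\psi(\lambda^*) = 0$, contrary to what we have just established.

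The main obstacle lies in the $\psi(\lambda^*) = 0$ alternatives in types (II) and (III). Excluding them amounts to the observation that the type of $\lambda^*$ in $\spec(Q)$ is governed by the dimension of the image of the boundary-value map $f \mapsto \bigl(f(a),f(b)\bigr)$ on the eigenspace; whenever $\psi(\lambda^*) = \lambda^* - \theta(\lambda^*) = 0$, this image is automatically two-dimensional, overriding type (II) or (III) and collapsing them into type (IV). Once this dichotomy is clear, the remaining verifications are direct substitutions into the two displayed identities together with an invocation of Lemma \ref{lem:psine0}.
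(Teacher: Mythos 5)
Your proposal is correct, but it reaches the conclusions by a more direct computational route than the paper. The paper's own proof runs everything through the single-edge test graph: it takes $X=\{x,y\}$ with one edge, so that $X[V]=V$ and $P_*=Q$, and then invokes Theorem \ref{theorem:eigen} together with the extension Lemmas \ref{lem:ii} and \ref{lem:iii} — e.g.\ in case (II) the constant function $\uno_X$ extends to a $\lambda^*$-eigenfunction of $P_*=Q$, forcing $\varphi(\lambda^*)=1$, and the alternative $\psi(\lambda^*)=0$ is excluded because Theorem \ref{theorem:eigen} would then make the eigenspace two-dimensional, i.e.\ of type (IV). You instead derive the explicit linear identities $Qf(a)=\alpha\,\theta(\lambda^*)+\beta\,\psi(\lambda^*)$ and $Qf(b)=\alpha\,\psi(\lambda^*)+\beta\,\theta(\lambda^*)$ for the boundary value problem solution and read off every case by substituting $(\alpha,\beta)=(1,1),(1,-1),(1,0),(0,1)$; these identities are correct (they are in essence the single-edge specialization of \eqref{eq:reordered} in the proof of Proposition \ref{pro:star}), and your observation that $\psi(\lambda^*)=\lambda^*-\theta(\lambda^*)=0$ makes the image of the boundary-value map two-dimensional is exactly the mechanism hiding behind the paper's appeal to Theorem \ref{theorem:eigen}. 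What your version buys is self-containedness — no detour through $P_*$ at all — at the cost of redoing a computation the paper has already packaged. One small point to make explicit in the multiplicity argument for the converses of (b) and (c): a hypothetical second independent eigenfunction could a priori have boundary data $(0,0)$ rather than data independent of $(1,\pm 1)$; that case is ruled out because its restriction to $V^o$ would be a nonzero eigenfunction of $Q_{V^o}$, contradicting $\lambda^*\notin\spec(Q_{V^o})$ via part (a). With that sentence added, your argument is complete.
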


\begin{proof}
(a) is clear from Proposition \ref{pro:specQ}.

\smallskip

Now suppose that $\lambda^* \notin \spec(Q_{V^o})$ and that it is not of type (I).
Suppose that $\psi(\lambda^*)=0$. We use again $X=\{x,y\}$ with the single edge $[x,y]$,
so that $X[V]=V$. We know that $\lambda^*$ is an eigenvalue of $P_*=Q$. By Theorem 
\ref{theorem:eigen}, we also must have $\theta(\lambda^*)=\lambda^*$. But then, again 
by Theorem \ref{theorem:eigen}, the associated eigenspace must have dimension 2,
which is only possible when $\lambda^*$ is of type (IV).

\smallskip 

(b) If $\lambda^* \notin \spec(Q_{V^o})$ then -- again by Proposition \ref{pro:specQ} --
$\nu_Q'(\lambda^*)=0$ and $\nu_Q(\lambda^*)=1$. By the above reasoning, we must have 
$\psi(\lambda^*) \ne 0$, and by Lemma \ref{lem:i}, we get a $\lambda^*$-eigenfunction 
of $P_*$ which extends the constant function $\uno$ on $X$. Therefore, by Theorem \ref{theorem:eigen},
$\varphi(\lambda^*)$ is the eigenvalue of $P$ of the eigenfunction $\uno_X\,$.

\smallskip

(c) is completeley analogous. We may first use the bipartite host graph $X = \{x,y\}$ as above with 
the eigenfunction $\uno_x - \uno_y$ to conclude that $\varphi(\lambda^*)=-1$.

\smallskip

The converse statements of (b) and (c) are now clear.

\smallskip 
(d) follows from the considerations preceding the proof of (b)  plus 
Theorem~\ref{theorem:eigen}.
\end{proof}

%\begin{rem}\label{rem:converse}
% There is a converse to statements (b) and (c):
%\end{rem}

\begin{cor}\label{cor:S2}
$\qquad \mathcal{S}_2 = \{ \lambda^* \in \spec(Q) \setminus \spec(Q_{V^o}):
\nu_Q(\lambda) = 2\},$\\[4pt]
and those eigenvalues of $Q$ have type \emph{(IV)}.
\end{cor}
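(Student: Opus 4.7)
The plan is to prove the asserted equality in both directions, relying on three earlier ingredients: Proposition \ref{pro:specQ} (which classifies an eigenvalue of $Q$ into types I--IV and asserts that $\nu_Q'(\lambda^*) \ge 1$ forces $\lambda^* \in \spec(Q_{V^o})$), the last clause of Theorem \ref{theorem:eigen}(ii) (which upgrades the analytic conditions defining $\mathcal{S}_2$ into membership in $\spec(Q)$), and the type-by-type bookkeeping of Lemma \ref{lem:compare}.

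For the inclusion $\mathcal{S}_2 \subseteq \{\lambda^* \in \spec(Q) \setminus \spec(Q_{V^o}) : \nu_Q(\lambda^*)=2\}$, I start with an arbitrary $\lambda^* \in \mathcal{S}_2$. By definition $\lambda^* \notin \spec(Q_{V^o})$, and the conditions $\psi(\lambda^*)=0$, $\theta(\lambda^*)=\lambda^*$ combined with Theorem \ref{theorem:eigen}(ii) yield $\lambda^* \in \spec(Q)$. It then remains to determine its type. Parts (a), (b), (c) of Lemma \ref{lem:compare} rule out types (I), (II), (III) for such a $\lambda^*$: in each case either the type forces $\lambda^* \in \spec(Q_{V^o})$, or else it forces $\psi(\lambda^*) \ne 0$, both contradicting membership in $\mathcal{S}_2$. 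Hence $\lambda^*$ has type (IV), and since $\lambda^* \notin \spec(Q_{V^o})$ Proposition \ref{pro:specQ} gives $\nu_Q'(\lambda^*)=0$, whence $\nu_Q(\lambda^*)=2$.

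For the reverse inclusion, I take $\lambda^* \in \spec(Q) \setminus \spec(Q_{V^o})$ with $\nu_Q(\lambda^*)=2$. Since $\lambda^* \notin \spec(Q_{V^o})$, Proposition \ref{pro:specQ} once again forces $\nu_Q'(\lambda^*)=0$. Consulting the values $\nu_Q' = \nu_Q,\,\nu_Q-1,\,\nu_Q-1,\,\nu_Q-2$ attached to types (I)--(IV), I see that $\nu_Q=2$ together with $\nu_Q'=0$ is compatible only with type (IV). Lemma \ref{lem:compare}(d) applied to this type (IV) eigenvalue then excludes the alternative $\lambda^* \in \spec(Q_{V^o})$ and delivers $\lambda^* \in \mathcal{S}_2$, at the same time confirming that every element of the right-hand set is of type (IV).

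The argument is essentially bookkeeping within the type classification and I do not foresee any genuine obstacle. The only point requiring a little care is the initial extraction of $\lambda^* \in \spec(Q)$ from the purely analytic defining conditions of $\mathcal{S}_2$, which is precisely what the closing sentence of Theorem \ref{theorem:eigen}(ii) provides.
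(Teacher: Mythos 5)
Your proposal is correct and follows exactly the route the paper intends (the corollary is stated without a separate proof, as an immediate consequence of Lemma \ref{lem:compare} together with the last clause of Theorem \ref{theorem:eigen}(ii)): extract $\lambda^*\in\spec(Q)$ from the analytic conditions, rule out types (I)--(III) via parts (a)--(c), and read off $\nu_Q=2$ from $\nu_Q'=0$ for type (IV), with part (d) giving the converse. No gaps.
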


It is important to have present that $\mathcal{S}_2$ may be empty.

\section{Description of $\spec(P_*)$}\label{sec:spec}

Summarising the considerations of the previous sections, we are now able to provide the 
desired description of $\spec(P_*)$ in terms of the input data coming from $(X,P)$ and $(V,Q)$. 

\begin{thm}\label{theorem:specP*} 
As a set, (i.e., without counting multiplicities), one  has 
%$\spec(P_*)$ is as follows. $\mathcal{S}_1 \cup \mathcal{S}_2\,$, and 
$$
\spec(P_*) = \mathcal{S}_1 \cup \mathcal{S}_2 \cup \spec(Q_{V^o}) \setminus \mathsf{Exc} \,.
$$
The exceptional set $\mathsf{Exc}$ is empty except possibly in one of the following special 
cases.
\\[4pt]
\emph{(A)}  If $X$ is a tree then 
$$
\mathsf{Exc} = \bigl\{ \lambda^* \in \spec(Q_{V^o}) \setminus \spec(Q_V) : 
\nu_o(\lambda^*) = 1 \; \text{and}\; \lambda^* \; \text{has type {\rm (II$^o$)} 
or {\rm (III$^o$)}}\bigr\}
$$
\emph{(B)} If $X$ has precisely one cycle and it length is odd then
$$
\begin{aligned}
&\quad\mathsf{Exc} = \bigl\{ \lambda^* \in \spec(Q_{V^o}) \setminus \spec(Q_V) : 
\nu_o(\lambda^*) = 1 \; \text{and}\; \lambda^* \; \text{has type {\rm (II$^o$)}}\bigr\}\\
&\cup \bigl\{ \lambda^* \in \spec(Q_{V^o}) \cap \spec(Q_V) : 
\nu_o(\lambda^*) = \nu_Q(\lambda)= 1 \; \text{and}\; \lambda^* \; 
\text{has types {\rm (II$^o$)} and {\rm (III)}}\bigr\}.
\end{aligned}
$$
Furthermore, the following holds.
\\[4pt]
\emph{(C.1)} If $X$ is bipartite then $\quad\spec(P_*) \supset \spec(Q)$.\\[4pt]
\emph{(C.2)} If $X$ is not bipartite then\\[2pt]
\hspace*{2.5cm} $\spec(P_*) \supset \spec(Q) \setminus \{ \lambda \in \spec(Q) : 
\lambda\; \text{ is of type \emph{(III)} and } \; \nu_Q(\lambda)=1 \}$.
\end{thm}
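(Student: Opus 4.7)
The plan is to verify the two inclusions of the set equality separately, then pin down $\mathsf{Exc}$ by a case analysis, and finally deduce (C.1)--(C.2). For the inclusion $\mathcal{S}_1 \cup \mathcal{S}_2 \cup \bigl(\spec(Q_{V^o}) \setminus \mathsf{Exc}\bigr) \subseteq \spec(P_*)$, the three summands are handled by different pieces of machinery already in place. For $\lambda^* \in \mathcal{S}_1$, any eigenfunction of $P$ for the eigenvalue $\varphi(\lambda^*) \in \spec(P)$ extends by the converse part of Theorem \ref{theorem:eigen}. For $\lambda^* \in \mathcal{S}_2$, the last clause of Theorem \ref{theorem:eigen} says every function on $X$ extends, so $\lambda^*$ is an eigenvalue of $P_*$ with multiplicity at least $|X|$. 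For $\lambda^* \in \spec(Q_{V^o}) \setminus \mathsf{Exc}$, I invoke Proposition \ref{pro:specQVo} to classify it, and then Lemma \ref{lem:i} if $\nu_o'(\lambda^*) \ge 1$, or one of Lemmas \ref{lem:odd}, \ref{lem:even}, \ref{lem:mixed} according to whether the type is (III$^o$), (II$^o$) or (IV$^o$).

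The reverse inclusion $\spec(P_*) \subseteq \mathcal{S}_1 \cup \mathcal{S}_2 \cup \spec(Q_{V^o})$ follows directly from Lemma \ref{lem:poles-spec}: either some $P_*$-eigenfunction for $\lambda^*$ is non-zero somewhere on $X$, in which case part (a) places $\lambda^* \in \mathcal{S}_1 \cup \mathcal{S}_2$, or every $\lambda^*$-eigenfunction vanishes on $X$ and then part (b) yields $\lambda^* \in \spec(Q_{V^o})$.

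The delicate step, and the one I expect to be the main obstacle, is showing that the elements of $\mathsf{Exc}$ are genuinely not in $\spec(P_*)$. Since $\mathsf{Exc} \subseteq \spec(Q_{V^o})$ and by Definition \ref{def:S1S2} the sets $\mathcal{S}_1, \mathcal{S}_2$ are disjoint from $\spec(Q_{V^o})$, any hypothetical eigenfunction for $\lambda^* \in \mathsf{Exc}$ must vanish on $X$; by Theorem \ref{thm:multiplicities} it is then a linear combination of the outputs of Lemmas \ref{lem:i}, \ref{lem:odd}, \ref{lem:even}, \ref{lem:mixed}. The definition of $\mathsf{Exc}$ forces $\nu_o'(\lambda^*) = 0$, so Lemma \ref{lem:i} contributes nothing, and the remaining three lemmas rely on cycles of $X$. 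When $X$ is a tree (case (A)) the fundamental cycle base is empty, so Lemmas \ref{lem:odd} and \ref{lem:even} produce zero eigenfunctions, and a type (II$^o$) or (III$^o$) eigenvalue can only enter $\spec(P_*)$ through Lemmas \ref{lem:ii} or \ref{lem:iii}, which requires $\lambda^* \in \spec(Q_V)$. When $X$ has exactly one cycle of odd length (case (B)), Lemma \ref{lem:odd} succeeds via that cycle so type (III$^o$) is unaffected, but every non-backtracking closed walk traverses the unique non-tree edge an even number of times with paired signs, forcing the defect at each edge to vanish; Lemma \ref{lem:even} therefore fails for type (II$^o$), and the rescue via Lemma \ref{lem:iii} needs $X$ bipartite, which it is not, yielding the second summand of $\mathsf{Exc}$.

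Statements (C.1) and (C.2) follow immediately from Proposition \ref{pro:specQ} combined with the extension lemmas \ref{lem:i}--\ref{lem:iv}: eigenvalues of $Q$ of types (I), (II), and (IV) always lift to $\spec(P_*)$ regardless of the structure of $X$; type (III) lifts via Lemma \ref{lem:i} as soon as $\nu_Q'(\lambda) \ge 1$, and via Lemma \ref{lem:iii} when $\nu_Q(\lambda) = 1$ only under the bipartiteness hypothesis, which is exactly the dichotomy (C.1) versus (C.2).
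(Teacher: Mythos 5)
Your proposal follows essentially the same route as the paper, whose proof of this theorem is simply an assembly of Theorem \ref{theorem:eigen}, Lemmas \ref{lem:i}, \ref{lem:odd}, \ref{lem:even}, \ref{lem:mixed}, Theorem \ref{thm:multiplicities}, Corollary \ref{cor:S2}, and (for (C.1)--(C.2)) Lemmas \ref{lem:ii}--\ref{lem:iv} and \ref{lem:compare}; your write-up is a faithful expansion of that assembly and the conclusions are all correct.

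One step deserves a more careful justification than you give it. To exclude $\lambda^* \in \mathsf{Exc}$ from $\spec(P_*)$ you assert that ``any hypothetical eigenfunction for $\lambda^*$ must vanish on $X$'' because $\mathcal{S}_1$ and $\mathcal{S}_2$ are, by Definition \ref{def:S1S2}, disjoint from $\spec(Q_{V^o})$. That disjointness is built into the \emph{definition} of $\mathcal{S}_1, \mathcal{S}_2$ and does not by itself rule out alternative (a) of Lemma \ref{lem:poles-spec}: a priori an element of $\spec(Q_{V^o})$ could still lie in some local spectrum $\mathcal{S}_x(P_*)$ with $x \in X$ (as indeed happens for type (I$^o$) eigenvalues), in which case it would possess an eigenfunction not vanishing on $X$ and could not be excluded. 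What saves the argument for the candidates in $\mathsf{Exc}$ is that they have type (II$^o$) or (III$^o$) with $\nu_o(\lambda^*)=1$, so the residue of $\psi$ at $\lambda^*$ equals $m_V(b)\,Qf_{\nu_o}(a)\,Qf_{\nu_o}(b) = \pm m_V(b) \ne 0$ (the computation is as in the proof of Proposition \ref{pro:phiprop}); hence $\psi$ has a pole at $\lambda^*$, both (a.1) and (a.2) of Lemma \ref{lem:poles-spec} fail, and only then does part (b) force every eigenfunction to vanish on $X$, after which Theorem \ref{thm:multiplicities} finishes the exclusion exactly as you describe.
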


\begin{proof}
This follows from Theorem \ref{theorem:eigen}, lemmas \ref{lem:i}, \ref{lem:odd}, \ref{lem:even} 
and \ref{lem:mixed} as well as Theorem \ref{thm:multiplicities}; see also Corollary \ref{cor:S2}.
Statements (C.1) and (C.2) follow from lemmas \ref{lem:ii}, \ref{lem:iii}, \ref{lem:iv}
and \ref{lem:compare}.
\end{proof}

More involved are the eigenvalue multiplicities which we summarise next. 
Recall the notation displayed at the beginning of \S \ref{sec:Green}. 
%For 
%$\lambda^* \in \spec(P_*)$ and $\lambda \in \spec(P)$, let $\nu_*(\lambda^*)$ and 
%$\nu_P(\lambda)$ be their respective multiplicities, in addition to the ones defined
%with respect to $Q$ and $Q_{V^o}$ in propositions \ref{pro:specQ} and \ref{pro:specQVo}.
To types (I)--(IV) of $\lambda^* \in \spec(Q)$ it is convenient to add type (0) if 
$\lambda^* \notin \spec(Q)$.

\begin{pro}\label{pro:mult}
\quad\emph{(1)} If $\lambda^* \in \mathcal{S}_1$ then 
 $\;\nu_*(\lambda^*) = \nu_P\bigl(\varphi(\lambda^*)\bigr)$.\\[4pt]
 \emph{(2)} If $\lambda^* \in \mathcal{S}_2$ then  $\;\nu_*(\lambda^*) = |X|$.\\[4pt]
 \emph{(3)} If $\lambda^* \in \spec(Q_{V^o})$ then the following table shows
  $\nu_*(\lambda^*)$ according to the possible types (``--'' means that the
  respective combination cannot occur).
\begin{center}
{\footnotesize
\begin{tabular}{ c |c c c c }
          & \rm (I$^o$) & \rm (II$^o$) & \rm (III$^o$) & \rm (IV$^o$) \\[2pt]
        \hline\\[-7pt]
\rm (0)\!\! & -- & $|E_X|-|X|+\delta_b$ & $|E_X|-|X|+1$ & $2|E_X|-|X|$
\\[4pt]
\rm (I)\!\! &  $\nu_o(\lambda^*)|E_X|$ & $\nu_o(\lambda^*)|E_X|-|X|+\delta_b$ & 
  $\nu_o(\lambda^*)|E_X|-|X|+1$ & $\nu_o(\lambda^*)|E_X|-|X|$
\\[4pt]
\rm (II)\!\! & $\nu_o(\lambda^*)|E_X|+1$  & $\nu_o(\lambda^*)|E_X|-|X|+1+\delta_b$ & 
  $\nu_o(\lambda^*)|E_X|-|X|+2$ & $\nu_o(\lambda^*)|E_X|-|X|+1$
\\[4pt]
\rm (III)\!\! & $\nu_o(\lambda^*)|E_X|+\delta_b$ & $\nu_o(\lambda^*)|E_X|-|X|+2\delta_b$ & 
   $\nu_o(\lambda^*)|E_X|-|X|+1+\delta_b$ & $\nu_o(\lambda^*)|E_X|-|X|+\delta_b$  \\[4pt]
\rm (IV)\!\! & $\nu_o(\lambda^*)|E_X|+|X|$ & $\nu_o(\lambda^*)|E_X|+ \delta_b$& 
$\nu_o(\lambda^*)|E_X|+ 1$ & $\nu_o(\lambda^*)|E_X|$ \\  
        \end{tabular}  
}
\end{center}

\end{pro}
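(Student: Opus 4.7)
My plan is to split the proof along the three parts of the statement. Parts~(1) and~(2) follow directly from Theorem~\ref{theorem:eigen}. If $\lambda^* \in \mathcal{S}_1$, the hypotheses of Theorem~\ref{theorem:eigen}(i) and~(ii) are satisfied, so the restriction map $f^* \mapsto f^*|_X$ furnishes a linear bijection between the $\lambda^*$-eigenspace of $P_*$ and the $\varphi(\lambda^*)$-eigenspace of $P$; this yields $\nu_*(\lambda^*) = \nu_P(\varphi(\lambda^*))$. If $\lambda^* \in \mathcal{S}_2$, the ``furthermore'' clause of Theorem~\ref{theorem:eigen}(ii) provides a linear bijection between $\R^X$ and the $\lambda^*$-eigenspace of $P_*$, giving $\nu_*(\lambda^*) = |X|$.

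For part~(3), I decompose the eigenspace $\mathcal{E}(\lambda^*)$ of $P_*$ via the restriction map $R(f^*) = f^*|_X$. Its kernel $\mathcal{E}_0$ is the nodal subspace, and $\nu_*(\lambda^*) = \dim \mathcal{E}_0 + \dim R(\mathcal{E})$. The nodal count is supplied by Theorem~\ref{thm:multiplicities}: the extensions built by Lemma~\ref{lem:i} (applied to $f_1,\dots,f_{\nu_o'}$), together with those built by Lemma~\ref{lem:odd}, \ref{lem:even}, or \ref{lem:mixed} (according to the type of $\lambda^*$ in $\spec(Q_{V^o})$), form a basis of $\mathcal{E}_0$; summing them reproduces the four values of Remark~\ref{rmk:numbers^o}, namely $\nu_o|E_X|$, $\nu_o|E_X|-|X|+\delta_b$, $\nu_o|E_X|-|X|+1$, or $\nu_o|E_X|-|X|$ in types (I$^o$), (II$^o$), (III$^o$), (IV$^o$).

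To identify $\dim R(\mathcal{E})$, Lemmas~\ref{lem:ii}, \ref{lem:iii}, \ref{lem:iv} yield lower bounds via explicit extensions of the type-(II), (III), (IV) basis functions of Proposition~\ref{pro:specQ}: the constant $\uno_X$ from type~(II), the bipartition function $\uno_{X_1}-\uno_{X_2}$ from type~(III) when $X$ is bipartite, and every $\delta_x$ from type~(IV), yielding extra contributions $0$, $1$, $\delta_b$, $|X|$ for types (0)/(I), (II), (III), (IV) respectively. For the matching upper bound, any $f = R(f^*)$ must, on each substituted copy $\{e\} \times V$, be the boundary data of a solution to the boundary value problem of Lemma~\ref{lem:bvalue}. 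Since $\lambda^* \in \spec(Q_{V^o})$, this problem is only conditionally solvable, the solvability condition reading $f(e^a)\,Qg(a) + f(e^b)\,Qg(b) = 0$ for every $g \in \mathcal{N}_{V^o}(\lambda^*)$. Combining this edge-wise constraint with the balance condition at each vertex of $X$, and with the duality $\nu_o' = \nu_Q'$ of Remark~\ref{rmk:zero}, forces $f$ to lie in the span of the canonical restrictions above. Adding the two counts produces each entry of the table.

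The main obstacle is this completeness argument. Unlike the setting of Proposition~\ref{pro:star}, here $\lambda^* \in \spec(Q_{V^o})$, so the edge-wise BVP has either no solution or a $\nu_o$-parameter affine family of solutions, and the extra $\nu_o$-dimensional freedom on each edge must be carefully absorbed into $\mathcal{E}_0$. A spanning-tree pruning argument analogous to the one in the proof of Theorem~\ref{thm:multiplicities} should handle this, reducing the claim to the statement that any residual $\lambda^*$-eigenfunction of $P_*$ that vanishes on $X$ and on the substituted copies along all non-tree edges is zero.
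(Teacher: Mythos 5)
Parts (1) and (2) of your proposal are correct and are exactly the route the paper intends: Theorem \ref{theorem:eigen} together with the uniqueness statement of Lemma \ref{lem:bvalue} (applicable because $\lambda^*\notin\spec(Q_{V^o})$ for both $\mathcal{S}_1$ and $\mathcal{S}_2$) makes the restriction map $f^*\mapsto f^*|_X$ injective, and surjectivity onto the $\varphi(\lambda^*)$-eigenspace of $P$, resp.\ onto all of $\R^X$, is the content of part (ii) of that theorem. Likewise, your computation of the nodal dimension $\dim\mathcal{E}_0$ in part (3) is complete: Theorem \ref{thm:multiplicities} plus Lemma \ref{lem:i} give exactly the four counts of Remark \ref{rmk:numbers^o}, and the rank--nullity decomposition $\nu_*(\lambda^*)=\dim\mathcal{E}_0+\dim R(\mathcal{E})$ is the right skeleton. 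Your Fredholm solvability condition $f(e^a)\,Qg(a)+f(e^b)\,Qg(b)=0$ for $g\in\mathcal{N}_{V^o}(\lambda^*)$ is also correctly stated (it follows from self-adjointness of $\lambda^* I-Q_{V^o}$ on $\ell^2(V^o,m_V)$ and $m_V(a)=m_V(b)$).

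The gap is in the upper bound for $\dim R(\mathcal{E})$. In column (I$^o$) every $g\in\mathcal{N}_{V^o}(\lambda^*)$ has $Qg(a)=Qg(b)=0$, so your solvability condition is vacuous and bounds nothing, yet the table asserts the image has dimension $0$, $1$, $\delta_b$, $|X|$ in rows (I)--(IV). To obtain this you must show that the restriction $f$ still satisfies the functional equation \eqref{eq:Upsi}; this amounts to rerunning Proposition \ref{pro:star} in the degenerate case, observing that in case (I$^o$) the residues of $G_{V^o}(\cdot,\cdot|z)$ at $\lambda^*$ are annihilated by $q(\cdot,a)$ and $q(\cdot,b)$, so that $\psi$, $\theta$, $\varphi$ remain regular at $\lambda^*$ and Lemma \ref{lem:compare} identifies $\varphi(\lambda^*)$ as $1$, $-1$, or a non-eigenvalue according to the row. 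Your proposed spanning-tree pruning cannot close this gap: it controls only the nodal part, which Theorem \ref{thm:multiplicities} has already settled. A second, smaller omission: where the solvability condition is \emph{not} vacuous you should push it to its logical conclusion. Since a type-(II), (III) or (IV) eigenfunction of $Q$ restricted to $V^o$ solves the boundary value problem with data $(1,1)$, $(1,-1)$, resp.\ $(1,0)$ and $(0,1)$, the Fredholm condition forces $Qg(a)+Qg(b)=0$, $Qg(a)-Qg(b)=0$, resp.\ $Qg(a)=Qg(b)=0$ for all $g\in\mathcal{N}_{V^o}(\lambda^*)$; hence several row/column combinations in the table (for instance (IV) with anything other than (I$^o$)) are mutually exclusive, and the case analysis you still owe is considerably shorter than the full $4\times 5$ grid suggests.
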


Note the following.
\begin{itemize}
 \item When $\nu_o(\lambda^*)=1$, type (I) can only occur in combination with type (I$^o$).
 \item When $X$ is a tree and $\nu_o(\lambda^*)=1$, the entries in (II$^o$) \& (0) and in 
 (III$^o$) \& (0) are $=0$, which confirms statement (A) of Theorem \ref{theorem:specP*}.
 \item When $X$ has precisely one cycle, and the latter has odd length (i.e. $X$ is a tree augmented by
 one edge giving rise to an odd cycle) and $\nu_o(\lambda^*)=1$, the entries in (II$^o$) \& (0) and in 
 (III$^o$) \& (III) are $=0$, which confirms statement (B) of Theorem \ref{theorem:specP*}.
\end{itemize}
Thus, the numbers in the table also cover the exceptional cases.

\medskip

\textbf{The spectral gap}

\smallskip

For a stochastic, reversible Markov chain transition matrix such as $P$ on $X$, the \emph{spectral gap}
is $|\lambda_1|\,$, %or (according to taste) $1-|\lambda_1|$, 
where $\lambda_0=1$ and $\lambda_1$ are the largest and second 
largest eigenvalues of $P$, respectively. The name is related to (minus) the Laplacian $I-P$, 
where $|\lambda_1|$ is the distance of its second eigenvalue from the bottom of its spectrum. 
It is a well known basic consequence of the spectral representation 
of $P$ that this is a key quantity for understanding the speed of convergence of the Markov chain 
distributions (the rows of $P^n$) to equilibrium
(the normalised measure $m_X$). %There is an abundant literature, of which we cite the
%seminal paper of {\sc Diaconis and Stroock}~\cite{DS}.

We now address the question whether for $P_*$ on $X[V]$, the second largest eigenvalue
has to be a solution of the equation $\varphi(\lambda_1^*) = \lambda_1\,$, or whether
the ``degenerate'' case $\lambda_1^* \in \spec(Q_{V^o})$ may arise.
For this, it is natural to assume that $X$ has at least three vertices, in which case it
is an easy exercise to show that $|\lambda_1| < 1$. Recall that we assume that $V \supset \{a,b\}$
strictly. The following is a corollary to Proposition \ref{pro:phiprop}.

\begin{cor}\label{cor:gap}
If $|X| \ge 3$ and $V^o$ is connected then $\lambda_1^* \notin \spec(Q_{V^o})$, and it is 
the largest solution of the equation $\varphi(\lambda^*) = \lambda_1\,$. 
In particular, $\lambda_1^* \in \mathcal{S}_1\,$.

Furthermore, if $\lambda_1 \ge 0$ the statement remains true without assuming connectedness of
$V^o$.
\end{cor}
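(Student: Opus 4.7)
The plan is to construct a distinguished solution $\lambda_1^{\#}$ of $\varphi(\lambda^*)=\lambda_1$ that lies in $(\lambda_0(Q_{V^o}),1)$, verify that it is an eigenvalue of $P_*$ belonging to $\mathcal{S}_1$, and then show that no eigenvalue of $P_*$ sits in the open interval $(\lambda_1^{\#},1)$. All of the analysis takes place inside $(\lambda_0(Q_{V^o}),1]$, where by Proposition \ref{pro:phiprop} the function $\varphi$ is continuous with $\varphi(1)=1$, is strictly increasing from $0$ to $1$ on the sub-interval $[\lambda_0(Q_{V\!-b}),1]$, and (when $V^o$ is connected) admits the left-end limit $\varphi(\lambda_0(Q_{V^o})^{+})=-1$. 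The ``furthermore'' hypothesis $\lambda_1\ge 0$ is precisely the amount of positivity that confines the analysis to the monotone right piece, which explains why the connectedness of $V^o$ becomes superfluous there.

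First I would produce $\lambda_1^{\#}$. Since $|X|\ge 3$, we have $\lambda_1\in(-1,1)$, so the intermediate value theorem applied to $\varphi$ on an appropriate sub-interval of $(\lambda_0(Q_{V^o}),1]$ (the monotone piece $[\lambda_0(Q_{V\!-b}),1]$ when $\lambda_1\ge 0$, the full interval when $V^o$ is connected) supplies at least one solution of $\varphi(\lambda^*)=\lambda_1$. I then take $\lambda_1^{\#}$ to be the largest solution in $[-1,1]$; by the previous sentence this maximum lies in $(\lambda_0(Q_{V^o}),1)$, so $\lambda_1^{\#}\notin\spec(Q_{V^o})$. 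Lemma \ref{lem:psine0} forces $\psi(\lambda_1^{\#})\ne 0$, and Theorem \ref{theorem:eigen}(ii) then produces the required eigenfunction, placing $\lambda_1^{\#}\in\mathcal{S}_1\subset\spec(P_*)$.

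Next I would verify $(\lambda_1^{\#},1)\cap\spec(P_*)=\emptyset$ by walking through the three pieces of the decomposition $\spec(P_*)\subset\mathcal{S}_1\cup\mathcal{S}_2\cup\spec(Q_{V^o})$ from Theorem \ref{theorem:specP*}. The piece $\spec(Q_{V^o})$ is contained in $[-\lambda_0(Q_{V^o}),\lambda_0(Q_{V^o})]$, which sits strictly below $\lambda_1^{\#}$. For $\mathcal{S}_2$, I would use that each Green-kernel summand in $\theta(z)$ is strictly decreasing in $z$ on $(\lambda_0(Q_{V^o}),\infty)$, so $z-\theta(z)$ is strictly increasing on $(\lambda_0(Q_{V^o}),1]$ and admits the single zero $\lambda_0(Q_{V\!-b})$; but $\psi(\lambda_0(Q_{V\!-b}))>0$, because the connectedness of $V\setminus\{b\}$ forces every component of $V^o$ to contain both a neighbour of $a$ and a neighbour of $b$, producing a strictly positive Green-kernel term in the definition \eqref{eq:psi-theta} of $\psi$. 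Hence $\lambda_0(Q_{V\!-b})\notin\mathcal{S}_2$ and $\mathcal{S}_2\cap(\lambda_0(Q_{V^o}),1]=\emptyset$.

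Finally I would rule out $\mathcal{S}_1\cap(\lambda_1^{\#},1)$. The only solution of $\varphi(\lambda^*)=1$ in $(\lambda_0(Q_{V^o}),1]$ is $\lambda^*=1$, by strict monotonicity on $[\lambda_0(Q_{V\!-b}),1]$ together with $\varphi<0$ on the remaining sub-interval $(\lambda_0(Q_{V^o}),\lambda_0(Q_{V\!-b}))$. For any $\mu\in\spec(P)\setminus\{1\}$ one has $\mu\le\lambda_1$; the maximality of $\lambda_1^{\#}$ together with $\varphi(1)=1>\lambda_1$ and the intermediate value theorem forces $\varphi>\lambda_1\ge\mu$ throughout $(\lambda_1^{\#},1]$, so $\varphi(\lambda^*)=\mu$ has no solution there. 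Combining these three exclusions identifies $\lambda_1^*$ with $\lambda_1^{\#}$. The subtlest step, and the main obstacle I anticipate, is the $\mathcal{S}_2$ exclusion in the case $\lambda_1<0$: there $\lambda_1^{\#}<\lambda_0(Q_{V\!-b})$, so one cannot simply invoke a crude interval bound on $\mathcal{S}_2$, and the monotonicity of $z-\theta(z)$ together with the strict positivity of $\psi$ at its unique zero in the interval becomes essential.
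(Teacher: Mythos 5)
Your proposal is correct and follows essentially the same route as the paper's proof: apply the intermediate value theorem to $\varphi$ on $(\lambda_0(Q_{V^o}),1]$ using Proposition \ref{pro:phiprop}, take the largest solution of $\varphi(\lambda^*)=\lambda_1$, observe it exceeds $\lambda_0(Q_{V^o})$ and that $\psi>0$ there. The only difference is that you spell out, via the decomposition of Theorem \ref{theorem:specP*} and the positivity of $\psi$ on $(\lambda_0(Q_{V^o}),1]$, why no eigenvalue of $P_*$ can lie strictly between this solution and $1$ -- a step the paper compresses into the bare assertion that the largest solution ``must be $\lambda_1^*$''.
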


\begin{proof}
When $V^o$ is connected, $\varphi(z)$ is continuous in the interval
$[\lambda_0(Q_{V_o})\,,\,1]$ with $\varphi(1)=1$ and $\varphi\bigl(\lambda_0(Q_{V^o})\bigr) =  -1$,
see Proposition \ref{pro:phiprop}. Since $|\lambda_1| <1$ there must be solutions of 
the equation $\varphi(\lambda^*) = \lambda_1\,$ in the open interval
$\bigl(\lambda_0(Q_{V_o})\,,\,1\bigr)$. We take the largest solution. This must be $\lambda_1^*\,$.
It is not contained in $\spec(Q_{V^o})$, being larger than the spectral radius of $Q_{V^o}\,$. 
And in that interval, $\psi(z) > 0$, concluding the proof. 

If $\lambda_1 \ge 0$ then by Proposition \ref{pro:phiprop}, the equation has a unique solution 
in $[\lambda_0(Q_{V\!-b})\,,\,1]$, and this is $\lambda_1^*\,$.
\end{proof}

Note that when Corollary \ref{cor:gap} applies, even beyond the assumption of
connectedness of $V^o$, one gets $\lambda_1^* > 0$, and further substitution
does no more require that assumption.

\section{Examples}\label{sec:examples}

\medskip

We now provide some examples, starting with a complete  computation for the 
graphs of Fig.~1.

\begin{exa}\label{exa:mple0} In Figure~1,
$X = \{x_0\,,\dots, x_4 \}$ is the 5-cycle, neighbourhood $x_k \sim x_{k+1}$ modulo $5$, and 
we take for $P$ the simple random walk (transition probabilities between neighbours are $=1/2$).
We have 
$$
\spec(P) = \biggl\{ \lambda_0 = 1\,, \lambda_1= \cos \frac{2\pi}{5}=\frac{\sqrt{5}-1}{4}\,, 
\lambda_2 = \cos \frac{4\pi}{5}=-\frac{\sqrt{5}+1}{4} \biggr\}\,,
$$
and the two eigenvalues $\ne 1$ have multiplicity 2 each.
The respective eigenspaces are spanned by the functions
$$
f_0 \equiv 1\,, \quad f_j(x_k) = \cos \frac{jk\pi}{5}\,,\quad g_j(x_k) = \sin \frac{jk\pi}{5}\,,
$$
where the index $j$ refers to the eigenvalue $\lambda_j\,$.

\smallskip

The graph $V = \{a,u, b, v\}$ is the 4-cycle with the additional edge $V^o = [u,v]$, and we 
also take simple random walk for $Q$:
$$
\begin{gathered}
q(a,u) = q(a,v) = q(b,u) = q(b,v) = 1/2\,,\;\  q(u,v) = q(v,u) = 0\\[3pt]
q(u,a) = q(u,b) = q(u,v) = q(v,a) = q(v,b) = q(v,u) = 1/3
\end{gathered}
$$
We have $\spec(Q) = \bigl\{ 1 \text{\; [type (II)]} \,,0\text{\; [type (III)]}\,, 
-1/3\text{\; [type (I)]}\,, -2/3\text{\; [type (II)]} 
%\lambda_3 = -\frac23, \lambda_2 =-\frac 13, \lambda_1 = 0\,,\lambda_0=1
\bigr\}\,$, each with multiplicity $1$. 
%\quad \text{associated eigenfunctions (in the same order):}\\[3pt]
%f_0 \equiv 1\,, \quad f_1(a) = -f_1(b)=1\,, f_1(u)=f_1(v) = 0\,, \\[3pt] 
%f_2(a) = f_2(b) =0\,, f_2(u)= -f_2(v) =1\,,\quad 
%\uno\,f_3(a) = f_3(b) = 1\,, f_3(u)= f_3(v) = -\frac{2}{3}\,.
%\\[3pt] 
%\end{gathered}

Next, $\spec(Q_{V^o}) = \bigl\{ -1/3\text{\; [type (I$^o$)]}, 1/3\text{ [type\; (II$^o$)]}\bigr\}\,$, 
again each with multiplicity $1$.

For the computation of $\varphi(z)$, $\psi(z)$ and $\theta(z)$, 
%$F(a,b|z)$ and $U_{V\!-b}(a,a|z)$ 
we can use the factor chain where $u$ and $v$ 
are merged into one state $uv$.
$$
\begin{gathered}
F(a,b|z) = \frac{1}{z}F(uv,b|z) \AND F(uv,b|z) = \frac{1}{3z}\Bigl(F(a,b|z) + F(uv,b|z) + 1\Bigr)\\[3pt] 
zU_{V\!-b}(a,a|z) = F_{V\!-b}(uv,a|z) = \frac{1}{3z} + \frac{1}{3z}F_{V\!-b}(uv,a|z) \quad \implies\\[3pt]
\varphi(z) = 3z^2 - z -1\,, \; \psi(z) = \theta(z) = \frac{1}{3z-1}.
\end{gathered}
$$
For the application of Lemma \ref{lem:bvalue} (left to the reader), we note that also 
$F_{V\!-b}(u,a|z) = F_{V\!-b}(v,a|z) = F_{V\!-a}(u,b|z) = F_{V\!-a}(v,b|z) = 1/(3z-1).$

We consider the equations $\varphi(\lambda^*) = \lambda \in \spec(P)\,$ (in the numbering, we
anticipate the downward ordering of the eigenvalues of $P_*$):
$$
\varphi(\lambda^*)= 1 \implies 
\lambda_0^* = 1\,,\quad \lambda_6^* = -\dfrac{2}{3}\,.
$$
Both belong to $\spec(Q) \setminus \spec(Q_{V^o})$, in accordance with Lemma \ref{lem:compare}.
The multiplicities with respect to $P_*$ are $1$, with respective unique extensions of the eigenfunction
$f_0$ of $P$ according to Lemma \ref{lem:bvalue} and 
Theorem \ref{theorem:eigen}.
$$
\varphi(\lambda^*)= \frac{\sqrt{5}-1}{4} \implies 
\lambda_1^* = \dfrac{1+\sqrt{10 + 3\sqrt{5}}}{6}\,,\quad 
\lambda_5^* = \dfrac{1-\sqrt{10 + 3\sqrt{5}}}{6}\,,
$$
Their multiplicities with respect to $P_*$ are $2$, with respective linearly independent 
eigenfunctions that extend $f_1$ and $g_1\,$. Analogously, 
$$
\varphi(\lambda^*)= -\frac{\sqrt{5}+1}{4} \implies 
\lambda_2^* = \dfrac{1+\sqrt{10 - 3\sqrt{5}}}{6}\,,\quad 
\lambda_3^* = \dfrac{1-\sqrt{10 - 3\sqrt{5}}}{6}\,,
$$
and their multiplicities with respect to $P_*$ are also $2$, with respective linearly independent 
eigenfunctions that extend $f_2$ and $g_2\,$. 
We omit the straightforward computation of those extensions.

We have thus completed the computation of $\mathcal{S}_1\,$, and $\mathcal{S}_2$ is empty.

Coming to $\spec(Q_{V^o})$, we get from Theorem \ref{theorem:specP*} (B) that 
$\mathsf{Exc} = \{ 1/3\}$, while the eigenvalue $\lambda_4^* = -1/3$ becomes 
part of $\spec(P_*)$ with corresponding
multiplicty $|E_X| = 5$.

%\smallskip 

We also note that $0 \in \spec(Q)$ is not contained in $\spec(P_*)$; see  
Theorem \ref{theorem:specP*} (C.2).

\smallskip 

We get
$$
\begin{aligned}
\spec(P_*) &= \biggl\{ 1\;(\text{multiplicity }1)\,,\;  -\dfrac{2}{3}\;(\text{multiplicity }1)\,,\;
                     \dfrac{1 \pm \sqrt{10 \pm 3\sqrt{5}}}{6}\;(\text{multiplicity $2$ each}) \biggr\}\\
           &\quad\   \cup \biggl\{-\dfrac{1}{3}\;(\text{multiplicity }5)\biggr\}.
\end{aligned}
$$
\end{exa}

We now work out how to handle three basic general expamples of substituent graphs $V$, 
without specifying the host graph $X$. 

\begin{exa}\label{exa:mple1} \emph{Path of length $L$}
\\[4pt]
$V = \{ v_0\,,v_1\,,\dots, v_L\}$ with $v_{j-1} \sim v_j$ ($j=1,\dots, L$) is the path
with length $L \ge 2$, and $Q$ is SRW. 
Adapting the computations of \cite[Lemmas 5.3 and 9.87(c)]{WMarkov} to the replacement 
$z \leftrightarrow 1/z$, we get 
$$
\varphi(z)=\mathcal{T}_L(z)\,,\quad \psi(z) = \frac{1}{\mathcal{U}_{L-1}(z)}\,,\AND
\theta(z) = z - \frac{\mathcal{T}_L(z)}{\mathcal{U}_{L-1}(z)}\,,
$$
where $\mathcal{T}_L$ and $\mathcal{U}_{L}$ are the $L^\text{th}$ Chebyshev polynomials of
the first, resp. second kind.

We have 
$$
\spec(Q) = \bigl\{ \lambda_k = \cos \tfrac{k\pi}{L}\,,\; k=0,\dots, L \bigr\}. 
$$
The eigenfunction associated with each $\lambda_k$ is $f_k(v_j) = \cos \tfrac{jk\pi}{L}$,
with $\nu_Q(\lambda_k)=1$ and $\nu_Q'(\lambda_k)=0$. Furthermore, 
$\lambda_k$ is of type (II) when $k$ is even and of type (III) when $k$ is odd.

\smallskip

The set $\mathcal{S}_2$ is empty.

\smallskip

Note that $Q_{V^o}$ is $\frac{1}{2}$ times the adjacency matrix of the path  
$v_1 \sim v_2 \sim \dots \sim v_{L-1}$ of length $L-2$. 
$$
\spec(Q_{V^o}) = \{ \lambda_k : k = 1, \dots, L-1 \} \subset \spec(Q),
$$
but associated with $\lambda_k$ we have a different eigenfunction, namely 
$g_k(v_j) = \sin \tfrac{jk\pi}{L} \big/ \sin \tfrac{k\pi}{L}$ for $k=1,\dots, L-1$.
We have $\nu_o(\lambda_k)=1$ and $\nu_o'(\lambda_k)=0$, and 
$\lambda_k$ is of type (II$^o$) when $k$ is odd and of type (III$^o$) when $k$ is even.

\smallskip

For the rest of these computations, we assume that \emph{$L$ is even.} The case when $L$ is odd
will then be clear and is left as an exercise.

\smallskip

First, we consider the equation $\varphi(\lambda^*) = \lambda$, where $\lambda \in [-1\,,\,1]$ is
given (a candidate eigenvalue of $P$), and we want $\lambda^* \notin \spec(Q_{V^o})$. We write
$$
\lambda = \cos \alpha \AND \lambda^* = \cos \alpha^* \quad \text{with }\; 
\alpha, \alpha^* \in [0\,,\,\pi].
$$
Then $\varphi(\lambda^*) = \cos L\alpha^*$, and for given $\alpha$ we want to solve 
$\cos L\alpha^* = \cos \alpha$ in $\alpha^*$.

First, let $\lambda = 1$, i.e., $\alpha =0$. We find the solutions
$$
\alpha^* \in \biggl\{ \frac{2\ell\pi}{L}  : \ell = 0, \dots, \frac{L}{2} \biggr\}.
$$
Of these, only $\alpha^* = 0$ and $\alpha^*=\pi$ are such that 
$\lambda^* = \pm 1 \notin \spec(Q_{V^o})$.
Next, let $\lambda = -1$, i.e., $\alpha =\pi$. We find the solutions
$$
\alpha^* \in \biggl\{ \frac{(2\ell-1)\pi}{L}  : \ell = 1, \dots, \frac{L}{2} \biggr\}
\subset \spec(Q_{V^o}).
$$
Finally, when $\lambda \ne \pm 1$, i.e., $\alpha \notin \{ 0, \pi\}$,
and we find $L$ distinct solutions for $\alpha^*$, namely
\begin{equation}\label{eq:alpha*}
\alpha^* \in \biggl\{ \frac{\alpha + 2(\ell-2)\pi}{L}\,,\;\frac{2\ell\pi - \alpha}{L} 
  : \ell = 1, \dots, \frac{L}{2} \biggr\}.
\end{equation}
Now suppose that for the host graph $X$ and its transition matrix $P$, we have
an eigenfunction $f$ such that $Pf = \lambda \, f$ with $\lambda \ne \pm 1$. 
Let $\lambda^* = \cos \alpha^*$ with $\alpha^*$ as in \eqref{eq:alpha*}.
Then, based on Lemma \ref{lem:bvalue}, we can compute the extension $f^*$ of $f$ 
which satisfies $P_*f^* = \lambda^* \, f^*$ explictly.
Namely (recalling that $a = v_0$ and $b=v_L$), 
by \cite[Lemma 5.5 and Example 5.6]{WMarkov}, for $j=1,\dots, L-1$, 
$$
%\begin{aligned}
F_{V\!-b}(v_j,a|\lambda^*) = \frac{\mathcal{U}_{L-j-1}(\lambda^*)}{\mathcal{U}_{L-1}(\lambda^*)} =
\frac{\sin (L-j)\alpha^*}{\sin L\alpha^*}
%\quad \text{and}\\
\!\!\AND\!\!
F_{V\!-a}(v_j,b|\lambda^*) = \frac{\mathcal{U}_{j-1}(\lambda^*)}{\mathcal{U}_{L-1}(\lambda^*)} =
\frac{\sin j\alpha^*}{\sin L\alpha^*}, 
%\end{aligned}
$$
so that for any $e \in E_X$
\begin{equation}\label{eq:f*evj}
f^*(e,v_j) = f(e^a) \,\frac{\sin (L-j)\alpha^*}{\sin L\alpha^*} + 
f(e^b) \, \frac{\sin j\alpha^*}{\sin L\alpha^*},
\end{equation}
which can of course be checked directly. Recall that the multiplicity $\nu_P(\lambda)$ of 
$\lambda \in \spec(P) \setminus \{\pm 1 \}$ coincides with the multiplicty of 
$\lambda^* = \cos \alpha^*$ as an eigenvalue of $P_*$.

We come back to $\lambda =  1$. Since $L$ is even, $V$ is bipartite, whence also
also $X[V]$ is bipartite. The constant function $\uno_X$ 
on $X$ extends on one hand to the constant function $\uno_{X[V]}$ with eigenvalue $\lambda^*=1$.
It also extends to the function which is $\equiv 1$ on the bipartite class of $X$ in $X[V]$, 
and $\equiv -1$ on the other bipartite class of $X[V]$, with eigenvalue $\lambda^* = -1$.

Note that with $\delta_b$ as in Proposition \ref{pro:mult},
$$
\sum_{\lambda \in \spec(P) \setminus \{\pm 1\}} \nu_P(\lambda) = |X| -1 - \delta_b
$$
We conclude that the sum of the multiplicites of the eigenvalues  of $P_*$ which are 
obtained via Theorem \ref{theorem:eigen}, or equivalently, which correspond to 
case (a.1) of Lemma \ref{lem:poles-spec}, is 
$$
2 + \bigl( |X| -1 - \delta_b \bigr)L
$$
On the other hand, via the above table, the sum of the multiplicites of the eigenvalues  
of $P_*$ which come from $\spec(Q_{V^o})$ is
$$
\bigl( |E_X| - |X| + 2\delta_b \bigr)\frac{L}{2} + \bigl( |E_X| - |X| + 2\bigr)\frac{L-2}{2}.
$$
Takig into account that $|L| = |V|-1$, we get that the total sum of all multiplicites
is $|X| + |E_X|\big(|V|-2\bigr) = \big|X[V]\big|$, as it must be.
\end{exa}

\begin{exa}\label{exa:mple2} \emph{Circle of length $M = 2L$}
\\[4pt]
$V = \{ v_0\,,v_1,\dots, v_{M-1} \}$ with $v_{j} \sim v_{j+1}$, where indices are taken modulo $M$,
and $Q$ is SRW: $q(v_j\,v_{j\pm 1}) = 1/2$ for $j= 0, \dots, M-1$. We choose $a= v_0$ and $b=v_L\,$.

Note that SRW on the path of length $L$, as considered in Example \ref{exa:mple1}, is the factor 
chain of the present $Q$ when uniting $v_j$ with $v_{M-j}$ for $j=1, \dots, L-1$.
Thus, again
$$
\varphi(z)=\mathcal{T}_L(z)\,,\quad \psi(z) = \frac{1}{\mathcal{U}_{L-1}(z)}\,,\AND
\theta(z) = z - \frac{\mathcal{T}_L(z)}{\mathcal{U}_{L-1}(z)}\,.
$$
Also,
$$
\spec(Q) = \bigl\{ \lambda_k = \cos \tfrac{k\pi}{L}\,,\; k=0,\dots, L \bigr\}, 
$$
as above, but here the multiplities are $\nu_Q(\lambda_k)=2$ for $k=1,\dots, L-1$, 
and for those indices, the normalised eigenfunctions are 
$$
f_{k,1}(v_j) = \sin \frac{jk\pi}{L} \Big/ \sin \frac{k\pi}{L} \AND
f_{k,2}(v_j) = \cos \frac{jk\pi}{L}\,,\quad j = 0,\dots, M-1. 
$$
Therefore $\nu_Q'(\lambda_k)= 1$, and the type of $\lambda_k$ with respect to $Q$ is 
(II) when $k$ is even and (III) when $k$ is odd.

\smallskip

The set $\mathcal{S}_2$ is again empty.

\smallskip

Next, $\nu_Q(\lambda_0) = \nu_Q(\lambda_L)=1$. 
The normalised eigenfunction associated with $\lambda_0=1$ is of course $\uno_V\,$, and the one associated 
with $\lambda_L = -1$ is $f(v_j)=(-1)^j$. In particular, $\lambda_0$ has type (II), and $\lambda_L$
has type (II) if $L$ is even, type (III) if $L$ is odd.

Note that $V^o$ is disconnected, consisting of two copies of the path of length $L-1$.
$$
\spec(Q_{V^o}) = \{ \lambda_k : k = 1, \dots, L-1 \} \subset \spec(Q),
$$
$\nu_o(\lambda_k)=2$, $\nu_o'(\lambda_k) = 1$, and 
normalised eigenfunctions are $f_{k,1}$ as above plus 
$$
g_{k,2}(v_j) = \begin{cases} 2\sin \frac{jk\pi}{L} \Big/ \sin \frac{k\pi}{L}\,,&j=1,\dots, L-1,\\
                             0\,, &j=L+1, \dots, M-1.   
               \end{cases}
$$
The type of $\lambda_k$ with respect to $Q_{V^o}$ is (II$^o$) when $k$ is odd and  (III$^o$) when 
$k$ is even.

\smallskip

For the rest of these computations, we assume that \emph{$L$ is even.} The case when $L$ is odd
will then be clear and is left as an exercise.

\smallskip

Again, we now assume that \emph{$L$ is even,} so that $M$ is a multiple of $4$, and 
leave the treatment of odd $L$ as an exercise. 

\smallskip

The study of the equation $\varphi(\lambda^*) = \lambda$ with given $\lambda \in [-1\,,\,1]$ 
and solutions  $\lambda^* \notin \spec(Q_{V^o})$ proceeds precisely as in Example \ref{exa:mple1}.
The extension formula \eqref{eq:f*evj} remains unchanged for $j=1,\dots, L-1$, 
and $f^*(e,v_{M-j}) = f^*(e,v_j)$.

\smallskip

Given the host graph $X$,
the sum of the multiplicites of the eigenvalues  of $P_*$ which are 
obtained via Theorem \ref{theorem:eigen}, or equivalently, which correspond to 
case (a.1) of Lemma \ref{lem:poles-spec}, is again
$$
2 + \bigl( |X| -1 - \delta_b \bigr)L
$$
And via the above table, the sum of the multiplicites of the eigenvalues  
of $P_*$ which come from $\spec(Q_{V^o})$ is
$$
\bigl( 2|E_X| - |X| + 2\delta_b \bigr)\frac{L}{2} + \bigl( 2|E_X| - |X| + 2\bigr)\frac{L-2}{2}.
$$
Takig into account that $2L = |V|$, we can verify also here that the total sum of all multiplicites
is $\big|X[V]\big|$, as it must be.
\end{exa}

\begin{exa}\label{exa:mple3} \emph{Another look at the circle of length $M = 2L$}
\\[4pt]
We take $V$ and $Q$ as in Example \ref{exa:mple2}, but this time we choose $a= v_0$ and $b=v_{M-1}\,$.

Here we use the factor chain where $v_j$ and $v_{M-2-j}$ are merged into one state for 
$j=0,\dots, L-2$, while $v_{L-1}$ and $v_{M-1}=b$ remain alone. We get SRW on the path of
length $L$, and $F(a,b|z)$ is the same as $F(L-1,L|z)$ on this path. By \cite[Lemma 5.3]{WMarkov}, 
$$
\varphi(z) = \frac{\mathcal{T}_{L}(z)}{\mathcal{T}_{L-1}(z)}.
$$
Next, $\psi(z) = \frac12 + \frac12 F_{V\!-a}(v_1,b|z) = \frac12 + \frac12 F_{V\!-b}(v_{M-2},a|z)$,
and using the same factor chain once more, via the formula in the middle of page 119 in \cite{WMarkov},
this is equal to $1/\mathcal{U}_{M-2}(z)$, whence
$$
\psi(z) = \frac12 \biggl(1 + \frac{1}{\mathcal{U}_{M-2}(z)}\biggr)
$$
Finally
$\theta(z) = z U_{V\;-b}(a,a|z) = \frac{1}{2z}F_{v_1\,,v_0}(z)$, and by 
\cite[Lemma 9.87(c)]{WMarkov}, 
$$
\frac{1}{z - F_{v_1\,,v_0}(z)} =  \frac{\mathcal{U}_{M-2}(z)}{\mathcal{T}_{M-1}(z)},
$$
whence
$$
\theta(z) = \frac12 \biggl(z - \frac{\mathcal{T}_{M-1}(z)}{\mathcal{U}_{M-2}(z)}\biggr).
$$
We remark that via some manipulations with the Chebyshev polynomials, substituting 
$\cos \alpha$ for $z$, one can verify directly that 
$\varphi(z) = \bigl(z- \theta(z)\bigr)\big/\psi(z)$.
%, 

The spectrum of $Q$ and the associated eigenfunctions are the same as in Example \ref{exa:mple2}, 
$\nu_Q(\lambda_0) = \nu_Q(\lambda_L)=1$, the type of $\lambda_0$ is (II) and the one of $\lambda_L$
with respect to the present choice of $b$ is (III). But now
the eigenfunctions associated with $\lambda_k$ for $k=1,\dots,L-1$ satisfy 
$f_{k,i}(b) \ne \pm f_{k,i}(a)$ for $i=1,2$, so that $\lambda_k$ has type (IV) and $\nu_Q(\lambda_k)=2$,
and one can verify directly that $\lambda_k - \theta(\lambda_k) = \psi(\lambda_k)=0$.
This time,
$$
\mathcal{S}_2 = \{ \lambda_1\,,\dots, \lambda_{L-1} \}.
$$
$Q_{V^o}$ is $\frac{1}{2}$ times the adjacency matrix of the path  
$v_1 \sim v_2 \sim \dots \sim v_{M-2}$ of length $M-3$. 
$$
\spec(Q_{V^o}) = \Bigl\{ \tilde\lambda_{\ell} = \cos \frac{\ell\pi}{M-1}: \ell = 1, \dots, M-2 \Bigr\},
$$
disjoint from $\spec(Q_{V})$. We have $\nu_o(\tilde \lambda_\ell)=1$, and the normalised
eigenfunctions are 
$$
g_{\ell}(v_j) = 2\sin \frac{j\ell\pi}{M-1} \Big/ \sin \frac{\ell\pi}{M-1}\,,\quad j=1,\dots, M-2,
$$
Again, the type of $\tilde\lambda_\ell$ with respect to $Q_{V^o}$ is (II$^o$) when $\ell$ is odd and  
(III$^o$) when $\ell$ is even. 

The equations $\varphi(\lambda^*) = \lambda \in [-1\,,\,1]$ are not as straightforward
to solve as above. 

Suppose we have the host graph $X$ with its transition matrix $P$, leading to $X[V]$ and 
$P_*\,$. Let us count the eigenvalue multiplicities of $P_*$ which we have so far:
from each $\lambda_k \in \mathcal{S}_2\,$, $k=1,\dots, L-1$, we get $|X|$ linearly
independent eigenfunctions of $P_*\,$, and from each $\tilde \lambda_{\ell} \in \spec(Q_{V^o})$
we get $|E_X| - |X| + 1$, resp. $|E_X| - |X| + \delta_b$ linearly independent eigenfunctions of
$P_*\,$, according to whether $\ell$ is odd or even. Together, the
resulting sum of multiplicities is
$$
|X|(L-1) + \bigl(|E_X| - |X|+1\bigr)(L-1) + \bigl(|E_X| - |X|+\delta_b\bigr)(L-1). 
%= |E_X|(|V|-2) - \bigl(|X|-1-\delta_b\bigr)(L-1).
$$
Now we observe that $\varphi(\tilde\lambda_{\ell}) = (-1)^{\ell}$ so that the equations
$\varphi(\lambda^*) = 1$ and $\varphi(\lambda^*) = -1$ only have the solutions 
$\lambda^* =1$, resp. $\lambda^* = -1$ which do not belong to $\spec(Q_{V^o})$.
Recall that $-1 \in \spec(P) \iff \delta_b = 1$. Thus, from those two equations we get
$1 + \delta_b$ solutions in $\mathcal{S}_1$, which is also the sum of their multiplicities
as eigenvalues of $P_*$.
The remaining number of equations $\varphi(\lambda^*) = \lambda \in \spec(P) \setminus \{\pm 1\}$
is $|X|-1-\delta_b\,$. Each of them transforms into the polynomial equation 
$\mathcal{T}_{L}(\lambda^*) = \lambda \mathcal{T}_{L-1}(\lambda^*)$ which has $L$ complex solutions,
a priori counting multiplicities. None of them can be in $\spec(Q_{V^o})$, since $\lambda \ne \pm 1$, and 
thus $\lambda^*$ is not a pole of $\psi(z)$.
By Corollary \ref{cor:allreal} each solution is in $(-1\,,\,1)$, and is simple by Lemma \ref{lem:psine0}.
Thus, each equation has exactly $L$ solutions which all belong to $(-1\,,\,1) \setminus \spec(Q_{V^o})$.
From these observations we obtain that
$$
\sum_{\lambda^* \in \mathcal{S}_1} \nu_{*} = 1 + \delta_b + \bigl(|X|-1-\delta_b\bigr)L\,.
$$
Again, the total sum of all multiplicities of the eigenvalues of $P_*$ is 
$|E_X|\bigl(|V|-2\bigr) + |X|$, as it has to be.
%
%\smallskip
%
%The reader is invited to perform explicit computations for small values of $M$, e.g. $M=4$ or $M=6$.
\end{exa}

\section{Final remarks and outlook}\label{sec:outlook}

(A) It is desirable to compute many further examples to complete the picture.

\medskip 

(B) A vertex $x$ of a graph is called spectrally dominant, if the local spectrum 
at $x$ (see Definition \ref{def:locspec}) is the entire spectrum.
In relation with the study of {\sc Bruchez,  de la Harpe and Nagnibeda}~\cite{BHN}, 
the construction presented here provides plenty of graphs which have 
spectrally non-dominant vertices.  
Namely, in almost all cases (and at least when $|X|$
has at least one even or two odd cycles), the vertices of $X$ are non-dominant in $X[V]$ -- the  
Green functions $G_*(x,x|z)$ at the vertices of $X \subset X[V]$ do not have the elements of 
$\spec(Q_{V^o})$ as poles. To be more precise, 
in \cite{BHN} the adjacency matrix $A =A_V$ is considered, while our results concern the normalised
version $Q$. However, Proposition \ref{pro:specQVo} applies equally to the restriction $A_{V^o}$
of $A$ to $V^o$, and the embedding lemmas \ref{lem:i}, \ref{lem:odd}, \ref{lem:even} and 
\ref{lem:mixed} also remain unchanged; indeed, the requirement that the corresponding 
eigenfunctions are balanced in the sense of Definition \ref{def:balance} only depends on
the edge weights (conductances) of $X$.

In the papers of {\sc O'Rourke and Wood}~\cite{OW} and  {\sc Liu and Simons}~\cite{LS}, the conjecture 
is stated that ``most finite graphs'' have characteristic polynomial irreducible over $\Q$, and 
as observed in \cite{BHN}, this implies that all vertices are dominant. \cite{BHN} provides
counterxamples, and here we can quantify: into any finite graph $X$ we can substitute the $4$-cycle
as in Example \ref{exa:mple2} and get a counterexample with $|X| + 2|E_X|$ vertices and $4|E_X|$
edges. 

\medskip

(C) Of future interest (and subject of planned studies) is the situation when the host 
graph $X$ is \emph{infinite} and connected, and one considers first of all
the weighted $\ell^2$-spectrum, where the weights are the total conductances $m_{X[V]}(\cdot)$. 
Beyond this, one may also take $V$ to be infinite.

Another approach is to take for $V$ a -- say, compact -- Riemannian manifold with its
Laplace-Beltrami operator. It should of course have two designated points $a$ and $b$ and 
an isometry which exchanges the two. When substituting $V$ in the place of the edges, 
one gets a Laplacian on the Riemannian complex thus created. At its singular points, that
is, the vertices of $X$, one needs to require compatibility, most typically in terms of
a Kirchhoff condition involving the normal derivatives at the glued copies of $V$. The basic 
and well-known example arises when $V$ is a line segment with endpoints $a$ and $b$. 
This has been studied in great detail in the context of what are called quantum graphs or metric graphs.
We cite here two of the earlier papers concerning the relation with the spectrum of $X$,
{\sc Roth}~\cite{Ro} and {\sc Cattaneo}~\cite{Ca}. For many details -- mostly involving infinite graphs -- 
and an exhaustive reference list, we cite the recent monograph of {\sc Kostenko and 
Nicolussi}~\cite{KN}. Besides line segments, one may of course insert many different types
of manifolds.

\medskip

(D) A particular instance of an infinite graph is obtained when one iterates the substitution
countably often and passes to the limit graph. In this case, it is desirable that the 
latter remains locally finite. Therefore one needs to restrict to graphs $V$ where the special
vertices $a$ and $b$ have degree $1$ (only one neighbour in $V$). 
%In the above (and further)
%examples, this amounts to rename the chosen vertices $a$ and $b$ -- say, $\bar a$ and $\bar b$ --
%and attach another ``hanging'' edge to each of them, whose other endpoints are then the 
%new vertices $a$ and $b$, respectively.
The graphs obtained in this way will be  self-similar graphs such as those constructed 
by {\sc Kr\"on}~\cite{Kr} and by {\sc Malozemov and Teplyaev}~\cite{MT}.

\medskip

(E) Although we consider the terminology ``graph substitution'' most adequate,
one may also relate this construction and the associated spectral theory 
with the wide range of graph products. The classical ones are the Cartesian, direct and 
strong as well as the lexicographic product (as a non-commutative one). In addition,
there are several products in which at least one of the two partner graphs has a designated
root, such as the free product, wreath product, star product or the comb product.
An older reference is {\sc Schwenk}~\cite{Sch}, more recent selected references are the
monograph by {\sc Hora and Obata}~\cite{HO} and {\sc Arizmendi, Hasebe and  Lehner}~\cite{AHL}.
Besides wreath procucts -- see {\sc Grigorchuk and \.Zuk}~\cite{GZ}, {\sc Bartholdi 
and Woess}~\cite{BW} and {\sc Woess}~\cite{W} for specific spectral results for certain 
infinite graphs -- the comb product $X \triangleright V$ is maybe the closest one to substitution: 
if $V$ is a graph with root $o$ then one attaches a copy of $V$ by its root to every vertex of $X$.

\end{document}